\documentclass[11pt,a4paper]{amsart}
\usepackage[T1]{fontenc}
\usepackage{lmodern}
\usepackage{amsmath,amsthm,amsfonts,amssymb,mathtools}
\usepackage{microtype}
\usepackage[textwidth=155mm,textheight=235mm,centering,headheight=14pt,headsep=7mm]{geometry}
\usepackage{cite,enumitem,needspace}
\usepackage{xurl}
\usepackage[unicode,hidelinks]{hyperref}
\numberwithin{equation}{section}
\setlist[enumerate,1]{label=\textup{(\roman*)},ref=\roman*,leftmargin=2.6em}
\newtheorem{theorem}{Theorem}[section]
\newtheorem{lemma}[theorem]{Lemma}
\newtheorem{proposition}[theorem]{Proposition}
\newtheorem{corollary}[theorem]{Corollary}
\theoremstyle{definition}
\newtheorem{definition}[theorem]{Definition}
\theoremstyle{remark}
\newtheorem{remark}[theorem]{Remark}
\theoremstyle{plain}
\newtheorem*{thmA}{Theorem A}

\newcommand{\C}{\mathbb C}
\newcommand{\Q}{\mathbb Q}

\newcommand{\OO}{\mathcal O}
\newcommand{\HH}{\mathcal H}
\newcommand{\GG}{\Gamma}
\newcommand{\Qab}{\mathbb Q^{\mathrm{ab}}}
\newcommand{\nrm}{\trianglelefteq}
\newcommand{\geb}{\mathrel{\ge_b}}
\DeclareMathOperator{\Irr}{Irr}

\DeclareMathOperator{\Bl}{Bl}
\DeclareMathOperator{\Aut}{Aut}
\DeclareMathOperator{\Gal}{Gal}
\DeclareMathOperator{\Ind}{Ind}

\DeclareMathOperator{\End}{End}
\DeclareMathOperator{\GL}{GL}

\DeclareMathOperator{\tr}{tr}
\DeclareMathOperator{\Br}{Br}
\DeclareMathOperator{\bl}{bl}
\DeclareMathOperator{\htc}{ht}
\DeclareMathOperator{\Syl}{Syl}

\hypersetup{
  pdftitle={A Galois-equivariant Dade--Glauberman--Nagao correspondence with central defect},
  pdfauthor={Shi Chen},
  pdfsubject={Research manuscript: ordinary characters and block H-triples},
  bookmarksnumbered=true
}

\title[Galois-equivariant DGN correspondence]{A Galois-equivariant
Dade--Glauberman--Nagao correspondence with central defect}
\author{Shi Chen}
\address{School of Mathematics and Statistics, Central China Normal University,
Wuhan 430079, China}
\email{chenshitjnu@163.com}
\keywords{Finite groups, ordinary characters, DGN correspondence,
Galois automorphisms, blocks, magic representations}
\date{}

\begin{document}
\begin{abstract}
We construct a correspondence between ordinary irreducible characters of
height zero above the generalized Dade--Glauberman--Nagao correspondence
for blocks with central defect. The correspondence is equivariant
under group and Galois automorphisms and satisfies a block relation
between $\mathcal H$-triples. For each pair of corresponding characters,
one pair of associated projective representations realizes matching
factor sets, equal central scalars, synchronized mixed comparison
functions and compatible block correspondences for every intermediate
group. The construction uses the multiplicity algebra over the group
algebra of the central defect subgroup, a normalized integral lift of
a residual semilinear realization, and a graded corner isomorphism.
It treats all central-character fibers, including those with nontrivial
central character. We also obtain a form suited to normal $p$-sections
in an Alperin--McKay--Navarro reduction.

\end{abstract}
\maketitle

\section*{Background and main theorem}
\phantomsection\addcontentsline{toc}{section}{Background and main theorem}

Let $p$ be a prime. Character correspondences above normal sections
are a key ingredient in reduction theorems for local-global
conjectures in the representation theory of finite groups. The
Alperin--McKay reduction of Sp\"ath \cite{Spa13} requires such
correspondences to be compatible with automorphisms and blocks of
intermediate groups. Galois refinements introduce an additional
compatibility requirement. Navarro's refinement of the McKay
conjecture \cite{Nav04} and the reduction theorem of Navarro,
Sp\"ath and Vallejo \cite{NSV20} provide the motivation and the
$\HH$-triple framework for the present work.

On the weight side, Feng, Fu and Zhou reduced the Navarro Alperin
weight conjecture to simple groups \cite{FFZ26a} and developed a
blockwise reduction in terms of $\HH$-triples \cite{FFZ26b}.
For $p$-solvable groups, Turull proved a strengthened
Alperin--McKay conjecture that also records character degree
residues, $p$-adic fields of values and local Schur indices
\cite{Tur13AM}.

The Dade--Glauberman--Nagao (DGN) correspondence is particularly
suited to these reduction arguments. Its origins lie in the
Glauberman correspondence for coprime action \cite{Gla68} and its
block-theoretic generalizations; see, for example, \cite{Dad80}.
At the level of block algebras, Koshitani and Michler \cite{KM01}
established Morita equivalences between the Brauer correspondents
of suitable blocks related by the Glauberman correspondence.
Navarro and Sp\"ath extended the DGN correspondence to characters
in blocks with normal defect groups \cite{NS14DGN}. In the
central-defect setting, they also constructed equivariant
correspondences of height-zero characters satisfying block
character-triple relations \cite[Theorem~5.13]{NS14}.

The algebraic approach relates character theory to the local
structure of $G$-algebras, as developed by Brou\'e and Puig
\cite{BP80}. Dade's theory of block extensions \cite{Dad73}
provides a Clifford-theoretic framework for blocks over normal
subgroups. Related structural results concern the source algebras
of nilpotent blocks \cite{Pui88}, their extensions \cite{KP90,PZ12},
and Morita equivalences associated with the Glauberman correspondence
\cite{GM15}.

Several constructions describe the additional properties of the
resulting character correspondences. Turull studied correspondences above
Glauberman correspondents, including compatibility with fields of
values and local Schur indices \cite{Tur08}. Ladisch's theory of
magic representations constructs character correspondences from
algebra isomorphisms and describes their behavior over different
coefficient fields \cite{Lad11}. Marcus and Minu\c{t}\u{a}
studied graded endomorphism algebras and graded Morita equivalences,
including a graded analogue of the butterfly theorem \cite{MM20},
and developed their connection with central relations between
character triples \cite{MM21}.

Turull's Brauer--Clifford group provides an algebraic framework
for character correspondences that preserve fields of values and
Schur indices \cite{Tur09a}. Its subgroup represented by full
matrix algebras has a description in terms of second cohomology
\cite{Tur09b}. Ladisch studied the Schur--Clifford subgroup
arising from finite-group character theory \cite{Lad15SC} and
reductions of character triples that preserve their Clifford
theory over the field of values, together with the Galois action
\cite{Lad16Gal}. A further construction of module and character
correspondences uses Turull's endoisomorphisms \cite{Tur13Endo}.

For irreducible Brauer characters, Fu constructed correspondences
above the DGN correspondence that are equivariant under both group
and Galois automorphisms, using modular $\HH$-triples and a fusion
homomorphism \cite{Fu26}. Here we construct an integral
correspondence for ordinary characters in the central-defect setting.
The construction treats all central-character fibers simultaneously,
including those on which the central defect subgroup acts
nontrivially, and yields a block relation between $\HH$-triples.
For each pair of corresponding characters, one pair of associated
projective representations realizes the common factor set, the
central scalar identities, the mixed comparison functions and the
block correspondences for every intermediate group.

We write $\HH=\HH_p\leq\Gal(\Qab/\Q)$ for the subgroup consisting
of the automorphisms whose action on all roots of unity of order
prime to $p$ is a common integral power of the map
$\zeta\mapsto\zeta^p$. Our main result is the following.
The notation and the definition of the relation $\ge_b$ are given
in Section~\ref{sec:prelim}.

\Needspace{12\baselineskip}
\begin{thmA}
\label{dgn:main}
Let $N\leq M$ be normal subgroups of a finite group $A$ such that
$M/N$ is a $p$-group. Let $b\in\Bl(N)$ have defect group
$Z\leq Z(M)$, and let $\theta\in\Irr(b)$ satisfy
\[
 \theta^m=\theta\qquad\text{for every }m\in M.
\]
Set
\[
 \Omega_\theta=\{\theta^\gamma\mid\gamma\in\HH\},
\]
and assume that $\theta^a\in\Omega_\theta$ for every $a\in A$.
Let $B$ be the unique block of $M$ covering $b$, and let $D$ be a
defect group of $B$. Set
\[
 H=N_A(D),\qquad L=N_N(D),\qquad M'=N_M(D),
\]
and let $\phi\in\Irr(L)$ be the generalized DGN correspondent
of $\theta$. Set $\Omega_\phi=\{\phi^\gamma\mid\gamma\in\HH\}$.
Then there is an $H\times\HH$-equivariant bijection
\[
 \Delta_D:\Irr_0(M\mid\Omega_\theta)
       \longrightarrow\Irr_0(M'\mid\Omega_\phi)
\]
that maps the fiber above $\theta^\gamma$ to the fiber above
$\phi^\gamma$ for every $\gamma\in\HH$, and sends each character
to a character in the Brauer correspondent of its block.
Moreover, if $\xi\in\Irr_0(M\mid\Omega_\theta)$ and
$\eta=\Delta_D(\xi)$, then
\[
 (A_{\xi^{\HH}},M,\xi)_{\HH}\ge_b
 (H_{\eta^{\HH}},M',\eta)_{\HH}.
\]
For each such pair $(\xi,\eta)$, the same pair of associated
projective representations satisfies all the conditions defining
this relation.
\end{thmA}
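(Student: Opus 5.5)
The plan follows the strategy of Navarro--Sp\"ath \cite[Theorem~5.13]{NS14}, but over an integral base ring large enough to carry the $\HH$-action, with the $\HH$-compatibility enforced by working with the whole orbit $\Omega_\theta$ at once. Since $Z\le Z(M)$ is the defect group of $b$, the block $b$ is nilpotent with central defect. Hence $\OO Nb\cong \mathrm{Mat}_{n}(\OO Z)$, where $\OO$ is a suitable complete discrete valuation ring with residue field $k$. The characters of $b$ are exactly the $\lambda$-twists $\theta_\lambda$ for $\lambda\in\Irr(Z)$; this is where the nontrivial central-character fibers enter.

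First I would form the multiplicity algebra
\[
 C=(\OO M\, b)^{N}\cong \End_{\OO N b}\bigl(\Ind\bigr)\quad\text{restricted to } M,
\]
which is an $M/N$-graded crossed product over $\OO Z$. Its characters parametrize $\Irr(M\mid\theta_\lambda)$ for all $\lambda$ simultaneously. In the same way I would form $C'=(\OO M' b')^{L}$ for the Brauer correspondent $b'$ of $b$ in $L$; the generalized DGN correspondence gives $\phi$ with $\OO L b'\cong\mathrm{Mat}_{n'}(\OO Z)$. The key claim is a graded isomorphism $C\cong C'$ of $\OO Z$-algebras, graded by $M/N\cong M'/L$ (Frattini argument, using that $D$ is a defect group of the unique covering block). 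I would also require this isomorphism to be equivariant for the semilinear action of $H\times\HH$.

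To build the isomorphism I would work residually. The Brauer map $\Br_D$ identifies the $D$-fixed points of the graded corners over $k$, so the residual algebras $\bar C$ and $\bar C'$ are isomorphic as graded algebras. Each $a\in H$ and each $\gamma\in\HH$ acts semilinearly, with $\gamma$ acting on $k$ through a Frobenius power, which is exactly why $\HH$ is used. This gives a residual semilinear realization: a projective representation $\bar{\mathcal P}$ of $M$ over $k$, extended semilinearly to $H\times\HH$ stabilizing $\Omega_\theta$. I would lift it to $\OO$ by a normalized integral lift, i.e.\ the Teichm\"uller-type lift, normalized by determinant and order conditions so that its factor set takes values in roots of unity of $p'$-order times elements of $Z$. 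Uniqueness of this normalized lift makes the lifts on the two sides agree and commute with the semilinear action.

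The main obstacle is this lifting step, together with showing that the resulting graded corner isomorphism $e\,\OO M b\,e\cong e'\,\OO M' b'\,e'$ is compatible simultaneously with $H$ and $\HH$. The difficulty is that the cocycle obstruction lies in $H^2(M/N,\OO^\times)$ while Galois twists the scalars. My first attempt is to show that the relevant obstruction classes are trivial because $M/N$ is a $p$-group and the $p'$-parts are rigid under lifting, and then to average over the $p$-group to fix a canonical normalization. Once the graded isomorphism is available, I would define $\Delta_D$ by transporting modules: $\xi$ corresponds to a simple $C\otimes K$-module, and I send it to the simple $C'\otimes K$-module with the same structure. Height zero is preserved because the dimensions over $\OO Z$ match up to the $p'$-factors $n,n'$. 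Fibers are preserved because both isomorphisms are $\OO Z$-linear, so $\theta^\gamma$ goes to $\phi^\gamma$. Equivariance under $H\times\HH$ follows from the equivariance of the isomorphism.

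Finally, for the relation $\ge_b$, I would take the projective representations $\mathcal P=\Ind$-tensor and $\mathcal P'$ obtained from the same lifted realization. Their factor sets coincide by construction. The central scalars agree because the isomorphism matches centers: $Z(C)\cong Z(C')$, and the central characters are the ones induced by $\Br_D$. The mixed comparison functions are synchronized because $\gamma$ acts on both sides through the same normalized lift. The block compatibility for every intermediate group $M\le J\le A_{\xi^{\HH}}$ follows from applying the graded isomorphism to the $J/N$-graded pieces and reading off Brauer correspondents via $\Br_D$.
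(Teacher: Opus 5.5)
Your outline (base ring $R=\OO Z$, residual semilinear realization, Teichm\"uller-normalized lift, graded corner, transport of modules, gluing over Galois fibers) resembles the paper's. However, the step you flag as the main obstacle fails as proposed, because you lift the wrong object. Your ``multiplicity algebra'' $C=C_{\OO Mb}(\OO Nb)$, like $C'$, is a crossed product of the $p$-group $M/N$ over $R$ with free rank-one homogeneous components. As graded $R$-algebras, such crossed products are classified by classes in $H^2(M/N,R^\times)=H^2(M/N,1+J(R))$; the Teichm\"uller part contributes nothing, since $|k^\times|$ is prime to $p$.

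Modulo $J(R)$ both $C$ and $C'$ become $k[M/N]$, because $H^2(M/N,k^\times)=1$. So your residual identification via $\Br_D$ holds for trivial reasons and records nothing about these classes. A normalization carried out separately on each side, or uniqueness of lifts of residual data, therefore cannot show that the two integral classes coincide: different classes have the same reduction. Nor are the classes trivial. Take $N=Z=\langle z\rangle$ of order $p$ and $M=D$ cyclic of order $p^2$. Then $C=\OO D=R[x]/(x^p-z)$, and a graded isomorphism with $R[M/N]$ would need $u\in R^\times$ with $u^p=z^{-1}$. Evaluating $u$ at the trivial character and at a faithful character $\lambda$ of $Z$ would give two roots of unity whose ratio is a primitive $p^2$th root of unity, yet which are congruent modulo $\lambda(z)-1$; this is impossible. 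Averaging over $M/N$ is also unavailable, since $|M/N|$ is not a unit in $\OO$.

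What is missing is an integral link between $N$ and $L$ of $p'$-rank. The paper embeds $\OO Lb'$ in $i\,\OO Ni$, with $i=bb'$, and uses the centralizer $S\cong M_n(R)$. Its reduction is a Dade $D/Z$-algebra with Brauer quotient $k$, whence $n^2\equiv1\pmod p$. The action of $(H\times\Gal(K/\Q_p))_\theta$ on $S$ is implemented by a crossed homomorphism $\sigma$ into $S^\times$. Its residual scalar ambiguity is fixed by Fu's fusion homomorphism, and its ambiguity in $1+J(R)$ is removed by $\det\sigma(a)=t(\det\bar\sigma(a))$; this works precisely because $v\mapsto v^n$ is bijective on $1+J(R)$ when $p\nmid n$. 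Then $\kappa(h)=hi\sigma_h^{-1}$ gives the graded corner isomorphism $\OO H_\theta b'\cong j\,\OO A_\theta b\,j$ over all of $A_\theta$. Your $C\cong C'$ is a consequence of this, not an input.

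The same normalization is what the relation $\geb$ needs, and your last paragraph does not supply it. Let $\omega$ be a nontrivial Teichm\"uller-valued linear character of $H_\theta/L$ of $p'$-order. Then $\omega\sigma$ satisfies the crossed identity and the determinant normalization relative to $\bar\omega\bar\sigma$, and it gives the same factor sets and relative degrees. But it twists each $\tau_J$ by $\omega^{-1}$, and for $J=M\times\langle h\rangle$ with $h$ a central $p'$-element and $\omega(h)\neq1$ this destroys $\bl(\tau_J(\chi))^J=\bl(\chi)$.

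The paper excludes such twists through $\Br_P(\bar\sigma(y))=1$ for $y$ centralizing $P=D/Z$, which comes from $\Phi(\widehat T_y)=1$. It then proves $\Br_D(B)=c$ on the test groups $M\langle h\rangle$ by passing to the central quotient by $Z$. It reaches every intermediate group through the Navarro--Sp\"ath normalized trace criterion. Likewise, the central-scalar condition concerns elements of $C_A(M)$, which lie outside $M$; it follows from $\sigma(c)=1$, not from $Z(C)\cong Z(C')$.

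Even with $\sigma$ in hand, $\kappa$ is equivariant only after specialization to a central-character fiber. The comparison functions agree only after the intertwiner is corrected by the action of $\sigma(a)$. An $H\times\HH$-equivariant isomorphism of $M/N$-graded algebras therefore does not determine the projective representations on $A_\xi$ and $H_\eta$, nor the block correspondences on the $J/N$-graded pieces.
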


Here, for a set $\Omega$ of irreducible characters of a normal
subgroup of $U$, the notation $\Irr_0(U\mid\Omega)$ denotes the
height-zero irreducible characters of $U$ lying over some member
of $\Omega$. The group $G_{\psi^{\HH}}$ is the stabilizer in $G$
of the Galois orbit $\{\psi^\gamma\mid\gamma\in\HH\}$. For every $\gamma\in\HH$, the block
$B^\gamma$ has $D$ as a defect group, so its Brauer correspondent
is taken in $M'=N_M(D)$. The theorem allows nontrivial central
characters on $Z$ and assumes only that $Z\leq Z(M)$; it does not
require $Z\leq Z(A)$.

The proof begins with the multiplicity algebra over $R=\OO Z$,
where $\OO$ is the valuation ring of a sufficiently large splitting
$p$-modular system. This algebra is a matrix algebra over $R$, and
its reduction modulo $J(R)$ is a Dade algebra. A fusion homomorphism
provides a crossed realization of the action of the mixed stabilizer
on this reduction. A determinant normalization then yields an
integral lift, from which we construct a graded corner
correspondence on all central-character fibers.

The remaining steps establish the character-triple and block
conditions for this correspondence. Compressing associated
projective representations through the corner gives matching mixed
comparison functions. We then identify the resulting tensor
correspondence with the corner correspondence used in the block
argument. This ensures that the block conditions are satisfied by
the same pair of projective representations. Block compatibility
is proved using central test groups and a normalized trace criterion.

Section~\ref{sec:prelim} introduces the notation and recalls the
character-triple tools. Section~\ref{sec:central} constructs the
integral multiplicity algebra and describes its central-character
fibers. Section~\ref{sec:lift} gives the semilinear realization
and its normalized integral lift, and Section~\ref{sec:corner}
constructs the graded corner correspondence and establishes its
degree properties. Sections~\ref{sec:mixed} and~\ref{sec:blocks}
prove compatibility of the mixed comparison functions and blocks,
respectively. Finally, Section~\ref{sec:completion} completes the
proof of Theorem~A and derives a form suited to the local step of
an Alperin--McKay--Navarro reduction.

\section{Preliminaries}\label{sec:prelim}

We fix the notation for coefficient fields, character fibers and
$\HH$-triples, and recall the correspondences and block criteria
used below. We also introduce the fusion homomorphisms and corner
algebras used in the subsequent constructions.

\subsection{Coefficient fields and Galois actions}

Throughout this paper, all groups are finite and all characters are
ordinary unless otherwise stated. We fix a prime $p$ and denote by
$\HH=\HH_p$ the subgroup of $\Gal(\Qab/\Q)$ consisting of the
automorphisms whose action on all roots of unity of order prime to $p$
is a common integral power of the map $\xi\mapsto\xi^p$.
Our conventions for $\HH$-triples follow \cite[Section~1]{NSV20}.
For background on ordinary character theory and blocks, we refer to
\cite{Isa06,Nav98}; for valuations and finite extensions of $\Q_p$,
see \cite{Ser79}.
Navarro's monograph \cite{Nav18} gives a systematic account of group
and Galois actions on characters and their role in the McKay conjecture.

Fix an embedding $\Qab\hookrightarrow\overline{\Q}_p$. For each
construction, choose a sufficiently large finite cyclotomic field
containing the relevant character values and projective matrix
entries. Let $K$ be its completion at the chosen prime, and let
$(K,\OO,k)$ be the resulting splitting $p$-modular system. We write
$J(\Lambda)$ for the Jacobson radical of a ring $\Lambda$. Thus
$\OO$ is the valuation ring of $K$, with maximal ideal $J(\OO)$
and residue field $k=\OO/J(\OO)$. For a finite group $X$,
we write $K[X]$ for its group algebra over the coefficient field $K$.

The action of $\HH$ preserves the chosen prime and induces compatible
actions on $K$, $\OO$ and $k$. Each construction uses only the finite
image of $\HH$ on these coefficients. For $u\in\OO$, we write $u^*$
for its image in $k$. When the coefficient system is enlarged, the
chosen matrices and algebra maps are extended by scalars.

We use right exponential notation for group and Galois actions.
Thus, if $X\nrm A$, $h\in A$,
$\gamma\in\HH$ and $\chi\in\Irr(X)$, we set
\[
 \chi^{h\gamma}(x)=\gamma\bigl(\chi(hxh^{-1})\bigr)
 \qquad(x\in X).
\]
The group and Galois actions commute. We denote the inertia group
of $\chi$ by $A_\chi$ and the stabilizer of its Galois orbit by
\[
 A_{\chi^{\HH}}=\{h\in A\mid\chi^h\in\chi^{\HH}\}.
\]
For $a=(h,\gamma)$, we also write $\chi^a=\chi^{h\gamma}$.

For a $p$-block $b$ of $X$, let $\Irr_0(b)$ denote the set of its
irreducible characters of height zero. If $D$ is a defect group of $b$,
the height of $\chi\in\Irr(b)$ is determined by
\begin{equation}\label{pre:eq:height}
 \chi(1)_p=|X:D|_p\,p^{\htc(\chi)}\qquad(\chi\in\Irr(b)).
\end{equation}
Here $r_p$ is the $p$-part of a positive integer $r$. We write
$\Irr_0(X)=\bigcup_{b\in\Bl(X)}\Irr_0(b)$ and, for a specified
$p$-subgroup $D\leq X$, set
\[
 \Irr_0(X\mid D)=
 \bigcup_{\substack{b\in\Bl(X)\\D\text{ a defect group of }b}}
 \Irr_0(b).
\]
Here $D$ is fixed as a subgroup of $X$.

\subsection{Character fibers and group actions}

Let $N\nrm X$ and $\theta\in\Irr(N)$. We use the standard notation
\[
 \Irr(X\mid\theta)=
 \{\chi\in\Irr(X)\mid\langle\chi_N,\theta\rangle_N>0\}.
\]
If $\Theta\subseteq\Irr(N)$, then
$\Irr(X\mid\Theta)=\bigcup_{\theta\in\Theta}\Irr(X\mid\theta)$.
The subscript $0$ restricts these sets to height-zero characters.
We use the analogous notation within a fixed block. If a defect
group $D$ is specified, we also intersect with $\Irr_0(X\mid D)$.

By Clifford theory, the irreducible constituents of $\chi_N$ form
a single $X$-orbit for each $\chi\in\Irr(X)$. In particular,
\[
 \Irr(X\mid\theta)=\Irr(X\mid\theta')
 \quad\text{if }\theta'=\theta^x\text{ for some }x\in X,
\]
while distinct $X$-orbits give disjoint sets. The natural fiber map
for a normal subgroup therefore takes values in $\Irr(N)/X$.

\begin{definition}\label{pre:def:central-fiber}
Let $Z\leq Z(X)$ and $\lambda\in\Irr(Z)$. The central-character
fiber above $\lambda$ is
\[
 \Irr(X\mid\lambda)=
 \{\chi\in\Irr(X)\mid\chi_Z=\chi(1)\lambda\}.
\]
For $b\in\Bl(X)$, set
$\Irr_0(b\mid\lambda)=\Irr_0(b)\cap\Irr(X\mid\lambda)$.
\end{definition}

By Schur's lemma, each $\chi\in\Irr(X)$ determines a unique
$\lambda_\chi\in\Irr(Z)$ with $\chi_Z=\chi(1)\lambda_\chi$.
Definition~\ref{pre:def:central-fiber} describes the fibers of
$\chi\mapsto\lambda_\chi$. In particular,
\begin{equation}\label{pre:eq:central-partition}
 \Irr_0(b)=\bigsqcup_{\lambda\in\Irr(Z)}\Irr_0(b\mid\lambda).
\end{equation}
Some of these fibers can be empty.

Suppose that $X,N\nrm A$. Restriction commutes with both actions,
so for $a=(h,\gamma)\in A\times\HH$ we have
\[
 \Irr(X\mid\theta)^a=\Irr(X\mid\theta^a).
\]
If $Z\leq Z(X)$ is $A$-stable, central-character fibers transform
in the same way. For height-zero characters in a fixed block,
\begin{equation}\label{pre:eq:fiber-transport}
 \Irr_0(b\mid\lambda)^a=
 \Irr_0(b^a\mid\lambda^a).
\end{equation}
The Galois action also preserves specified defect groups.

\begin{lemma}\label{pre:lem:galois-defects}
Let $b\in\Bl(X)$ have defect group $D$, and let $\gamma\in\HH$.
Then $D$ is a defect group of $b^\gamma$. Brauer correspondence
commutes with this Galois action.
\end{lemma}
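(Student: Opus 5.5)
We prove the lemma by working with central characters, which is where the Galois action and block theory meet. Let $e_b$ be the block idempotent of $b$ in $Z(K[X])$, and write $e_b=\sum_{\chi\in\Irr(b)}e_\chi$ with
\[
 e_\chi=\frac{\chi(1)}{|X|}\sum_{x\in X}\chi(x^{-1})x .
\]
For $\gamma\in\HH$, act on the coefficients to get $\gamma(e_b)=\sum_{\chi\in\Irr(b)}e_{\chi^\gamma}$. The set $\Irr(b)^\gamma$ is again a union of blocks, since $\gamma$ maps primitive central idempotents of $\OO X$ to primitive central idempotents. Moreover, $\gamma$ preserves the chosen prime and therefore $\OO$. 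Hence $\Irr(b)^\gamma=\Irr(b^\gamma)$ and $e_{b^\gamma}=\gamma(e_b)$. The central character satisfies $\omega_{\chi^\gamma}(\hat C)=\gamma(\omega_\chi(\hat C))$ for class sums $\hat C$. Reducing modulo $J(\OO)$ gives
\[
 \lambda_{b^\gamma}=\gamma^*\circ\lambda_b ,
\]
where $\gamma^*$ is the induced automorphism of $k$.

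\textbf{Defect groups.} A defect group of $b$ is a Sylow $p$-subgroup of the centralizer of a defect class $C$. A defect class is a $p$-regular class with $\lambda_b(\hat C)\neq0$ and $a_b(C)\neq0$, where $e_b=\sum_C a_b(C)\hat C$ and $a_b(C)^*\neq 0$ (Nav98, Ch.~3–4). The coefficients transform as $a_{b^\gamma}(C)=\gamma(a_b(C))$, and $\gamma^*$ is injective, so every defect class of $b$ is a defect class of $b^\gamma$. Thus $D$, a Sylow $p$-subgroup of $C_X(x)$ with $x\in C$, is a defect group of $b^\gamma$.

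We do not need the special shape of $\HH$ here. Every element of $\Gal(\Qab/\Q)$ fixing the chosen prime would suffice, and $\HH$ does fix it by the remark above the lemma. We only need that $\gamma$ acts as a ring automorphism of $\OO$ stabilizing $J(\OO)$.

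\textbf{Brauer correspondence.} Let $c\in\Bl(N_X(D))$ with $c^X=b$. The Brauer correspondence is characterized by
\[
 \lambda_c^X=\lambda_b, \qquad \lambda_c^X(\hat C)=\lambda_c\bigl(\Br_D(\hat C)\bigr),
\]
where $\Br_D$ is the Brauer homomorphism. The map $\Br_D$ is defined over the prime field and sends class sums to sums of group elements with integer coefficients. It therefore commutes with $\gamma^*$. Consequently
\[
 \lambda_{c^\gamma}^X=\gamma^*\circ\lambda_c^X=\gamma^*\circ\lambda_b=\lambda_{b^\gamma},
\]
so $(c^\gamma)^X=b^\gamma$.

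\textbf{Main obstacle.} The only delicate point is justifying that $\gamma$ induces a well-defined automorphism of $k$ compatible with reduction. This holds because $\gamma$ stabilizes the chosen prime, which is precisely the statement that $\HH$ preserves it.
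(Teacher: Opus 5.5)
Your proof is correct, but it goes through the classical central-character machinery rather than the idempotent and Brauer-homomorphism argument the paper uses.

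The paper's route is as follows. It shows $e_{b^\gamma}=e_b^\gamma$ in $kX$ and observes that every $\Br_Q^X$ commutes with the coefficient action. It then uses two facts. The first is that defect groups are the maximal $p$-subgroups $Q$ with $\Br_Q^X(e_b)\neq0$. The second is Brauer's first main theorem in idempotent form, $e_c=\Br_D^X(e_b)$. The conclusion comes out as the explicit identity $e_{c^\gamma}=\Br_D^X(e_{b^\gamma})$. Running the argument with $\gamma^{-1}$ shows that $b$ and $b^\gamma$ have exactly the same defect groups.

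You instead transport defect classes in Navarro's sense, using $a_{b^\gamma}(C)^*=\bar\gamma(a_b(C)^*)$ and $\lambda_{b^\gamma}(\hat C)=\bar\gamma(\lambda_b(\hat C))$. You then identify Brauer correspondents through $\lambda_c^X=\lambda_c\circ\Br_D$ on class sums. This is more elementary and matches the central-character block criterion the paper uses later.

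The cost is three points you leave implicit:
\begin{enumerate}
\item The formula $\lambda_c^X(\hat C)=\lambda_c(\Br_D(\hat C))$ requires the standard fact that $\lambda_c$ vanishes on $N_X(D)$-class sums not contained in $C_X(D)$. This holds because $D\trianglelefteq N_X(D)$.
\item To conclude that $c^\gamma$ is \emph{the} Brauer correspondent of $b^\gamma$, and not merely a block inducing to it, you should note that $c^\gamma$ has defect group $D$. This follows from your first part applied in $N_X(D)$.
\item The identity $\lambda_{b^\gamma}=\gamma^*\circ\lambda_b$ is valid on class sums, since one side is $k$-linear and the other semilinear. That is exactly how you use it, so nothing breaks.
\end{enumerate}
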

\begin{proof}
We first describe the action on the coefficient algebras. By the
choice of the splitting $p$-modular system $(K,\OO,k)$, the
automorphism $\gamma$ preserves $\OO$. It therefore preserves its
unique maximal ideal $J(\OO)$ and induces a map
\[
 \bar\gamma:k\longrightarrow k,\qquad
 u+J(\OO)\longmapsto\gamma(u)+J(\OO).
\]
This map is well defined: if $u-v\in J(\OO)$, then
$\gamma(u)-\gamma(v)=\gamma(u-v)\in J(\OO)$.
It preserves addition, multiplication and the identity, and the map
induced by $\gamma^{-1}$ is its inverse. Thus $\bar\gamma$ is a
field automorphism of $k$. In the notation introduced above,
\[
 \bar\gamma(u^*)=\gamma(u)^*\qquad(u\in\OO).
\]

For a subgroup $Y\leq X$, define the coefficient action on $kY$ by
\[
 \left(\sum_{y\in Y}a_y y\right)^\gamma
   =\sum_{y\in Y}\bar\gamma(a_y)y.
\]
This is a ring automorphism fixing every element of $Y$ and
satisfying $(au)^\gamma=\bar\gamma(a)u^\gamma$ for $a\in k$
and $u\in kY$. Hence it is $\bar\gamma$-semilinear. The analogous
coefficient actions on $\OO Y$ and $K[Y]$ use $\gamma$ itself and
commute with reduction modulo $J(\OO)$. These automorphisms preserve
centers and permute primitive central idempotents: a nontrivial
decomposition of the image of such an idempotent would give one
of the original idempotent after applying the inverse automorphism.

We next identify this permutation with the Galois action on blocks.
For $\chi\in\Irr(X)$, the corresponding primitive central
idempotent of $K[X]$ is
\[
 e_\chi=\frac{\chi(1)}{|X|}
       \sum_{x\in X}\chi(x^{-1})x.
\]
Since $\gamma$ fixes rational numbers, applying it coefficientwise
gives $e_\chi^\gamma=e_{\chi^\gamma}$. Let $E_b\in Z(\OO X)$
be the block idempotent of $b$, and let $e_b=E_b^*\in Z(kX)$ be
its reduction. In $K[X]$ we have
\[
 E_b=\sum_{\chi\in\Irr(b)}e_\chi,
 \qquad
 E_b^\gamma=\sum_{\chi\in\Irr(b)}e_{\chi^\gamma}.
\]
As $E_b^\gamma$ is again a primitive central idempotent of $\OO X$,
the latter equality shows that its irreducible characters are
precisely the elements of $\Irr(b)^\gamma$. It is therefore the
idempotent of $b^\gamma$. Reducing this identity yields
\[
 e_{b^\gamma}=e_b^\gamma.
\]

Let $Q\leq X$ be a $p$-subgroup. The coefficient action commutes
with conjugation by $Q$, so it preserves $(kX)^Q$. The Brauer
homomorphism is the map
\[
 \Br_Q^X:(kX)^Q\longrightarrow kC_X(Q),\qquad
 \sum_{x\in X}a_xx\longmapsto
 \sum_{x\in C_X(Q)}a_xx.
\]
Consequently, for $u=\sum_{x\in X}a_xx\in(kX)^Q$, we obtain
\[
 \Br_Q^X(u^\gamma)
 =\sum_{x\in C_X(Q)}\bar\gamma(a_x)x
 =\Br_Q^X(u)^\gamma.
\]
Since the coefficient action on $kC_X(Q)$ is bijective, this gives
\[
 \Br_Q^X(e_{b^\gamma})\neq0
 \quad\Longleftrightarrow\quad
 \Br_Q^X(e_b)\neq0.
\]
By the Brauer-homomorphism characterization of defect groups, a
defect group of a block is a $p$-subgroup maximal under inclusion
among those on which its block idempotent has nonzero Brauer image;
see \cite[Sections~6.1--6.2]{Lin18II}.
In particular, $\Br_D^X(e_{b^\gamma})\neq0$. If a $p$-subgroup
$Q$ properly containing $D$ also had
$\Br_Q^X(e_{b^\gamma})\neq0$, the displayed equivalence would
give $\Br_Q^X(e_b)\neq0$, contrary to the maximality of $D$ for
$b$. Thus $D$ is a defect group of $b^\gamma$. The same argument
with $\gamma^{-1}$ shows that $b$ and $b^\gamma$ have exactly the
same defect groups as subgroups of $X$.

Finally, let $c$ be the Brauer correspondent of $b$ in $N_X(D)$.
In its idempotent formulation, Brauer's first main theorem states
that
\[
 e_c=\Br_D^X(e_b)
 \quad\text{in }kN_X(D);
\]
see \cite[Section~6.7]{Lin18II}. Here the Brauer image lies in
$kC_X(D)\subseteq kN_X(D)$, and its primitivity is understood in
the center of the normalizer algebra $kN_X(D)$; it may decompose
as a sum of block idempotents in $kC_X(D)$.
Applying the coefficient action and using the identities above,
we find
\[
 e_{c^\gamma}
 =e_c^\gamma
 =\Br_D^X(e_b)^\gamma
 =\Br_D^X(e_{b^\gamma}).
\]
Since $D$ is a defect group of $b^\gamma$, Brauer's first main
theorem identifies the last expression with the idempotent of its
Brauer correspondent in $N_X(D)$. Therefore this correspondent is
$c^\gamma$, as required.
\end{proof}

\subsection{Projective representations and comparison functions}

We use the character-triple conventions of \cite[Chapter~11]{Isa06}
and \cite[Section~1]{NSV20}. For a projective representation
$\mathcal P$ with factor set $\alpha$, our convention is
\[
 \mathcal P(x)\mathcal P(y)=\alpha(x,y)\mathcal P(xy),\qquad \mathcal P(1)=1.
\]
Let $N\nrm G$ and $\theta\in\Irr(N)$. A projective representation
$\mathcal P$ of $G_\theta$ associated with $\theta$ restricts to a
representation affording $\theta$ and satisfies
$\mathcal P(nx)=\mathcal P(n)\mathcal P(x)$ and $\mathcal P(xn)=\mathcal P(x)\mathcal P(n)$ for $n\in N$.
Its factor set is inflated from $G_\theta/N$. Throughout, the matrix
entries lie in $\Qab$ and the factor-set values are roots of unity.

For $a=(h,\gamma)\in(G\times\HH)_\theta$, the element $h$
normalizes $G_\theta$, and the projective representation
\[
 \mathcal P^a(x)=\mathcal P(hxh^{-1})^\gamma
\]
is again associated with $\theta$. There is a unique function
$\mu_a:G_\theta\to(\Qab)^{\times}$, constant on $N$-cosets and
with $\mu_a(1)=1$, such that
\begin{equation}\label{pre:eq:comparison}
 \mathcal P^a(x)=\mu_a(x)T_a\mathcal P(x)T_a^{-1}\qquad(x\in G_\theta)
\end{equation}
for some invertible matrix $T_a$ independent of $x$; see
\cite[Lemma~1.4]{NSV20}. We call $\mu_a$ the comparison function
of $\mathcal P$ for $a$. In particular, $\mu_a|_N=1$.

For a function $f:G_\theta\to(\Qab)^\times$, define
$df(x,y)=f(x)f(y)f(xy)^{-1}$. Comparing products in
\eqref{pre:eq:comparison} gives
\begin{equation}\label{pre:eq:comparison-coboundary}
 d\mu_a=\alpha^a\alpha^{-1}.
\end{equation}
If $f$ is constant on $N$-cosets and normalized by $f(1)=1$,
replacing $\mathcal P$ by $f\mathcal P$ gives
\begin{equation}\label{pre:eq:twist}
 \widetilde\alpha=(df)\alpha,
 \qquad \widetilde\mu_a=\mu_a\frac{f^a}{f}.
\end{equation}
Thus equality of factor sets does not ensure equality of comparison
functions. A twist by a linear character $f$ leaves $\alpha$
unchanged but can alter $\mu_a$ through the factor $f^a/f$.

\subsection{Block relations between \texorpdfstring{$\HH$}{H}-triples}

Let $N\nrm G$ and $\theta\in\Irr(N)$. We call
$(G,N,\theta)_\HH$ an $\HH$-triple if
$\theta^g\in\theta^\HH$ for every $g\in G$.
We combine the $\HH$-triple relation of
\cite[Definition~1.5]{NSV20} with the block conditions of
\cite[Definition~3.6]{NS14}, requiring all conditions to be realized
by the same pair of associated projective representations.

\Needspace{10\baselineskip}
\begin{definition}\label{def:full}\label{cliff:def:full-block-relation}
Let $(G,N,\theta)_\HH$ and $(H,M,\phi)_\HH$ be $\HH$-triples,
where $H\leq G$ and $M=N\cap H$. We write
\[
 (G,N,\theta)_\HH\geb(H,M,\phi)_\HH
\]
if the following conditions hold.
\begin{enumerate}
\item $G=NH$, $C_G(N)\leq H$, and
$(H\times\HH)_\theta=(H\times\HH)_\phi$.
\item There are associated projective representations
\[
 \mathcal P:G_\theta\longrightarrow\GL_{\theta(1)}(\Qab),\qquad
 \mathcal P':H_\phi\longrightarrow\GL_{\phi(1)}(\Qab)
\]
whose factor sets take values in roots of unity and agree on
$H_\phi\times H_\phi$. For every $c\in C_G(N)$, the matrices
$\mathcal P(c)$ and $\mathcal P'(c)$ are scalar matrices with the
same scalar.
\item For every $a\in(H\times\HH)_\theta$, the comparison functions
of this pair satisfy $\mu'_a=\mu_a|_{H_\phi}$.
\item The blocks $\bl(\theta)$ and $\bl(\phi)$ have a common
defect group $D$ with $N_N(D)\leq M$. For every $N\leq W\leq G_\theta$,
the correspondence
\[
 \tau_W:\Irr(W\mid\theta)\longrightarrow\Irr(W\cap H\mid\phi)
\]
determined by this same pair $\mathcal P,\mathcal P'$ satisfies
\[
 \bl\bigl(\tau_W(\chi)\bigr)^W=\bl(\chi)
 \qquad(\chi\in\Irr(W\mid\theta)).
\]
The block induction in this equality is required to be defined.
\end{enumerate}
\end{definition}

Omitting condition (iv) defines $\ge_c$. By condition (i),
$H_\theta=H_\phi$ and $G_\theta=NH_\phi$. Hence
$W/N\cong(W\cap H)/M$ whenever $N\leq W\leq G_\theta$.
The next subsection recalls the correspondences in condition (iv).

Both relations are unchanged under the simultaneous replacement
of $\mathcal P,\mathcal P'$ by $f\mathcal P,f|_{H_\phi}\mathcal P'$, where
$f$ is normalized, constant on $N$-cosets and root-of-unity valued.
Equation~\eqref{pre:eq:twist} gives compatible changes to the factor
sets and comparison functions, while the inverse twist on the
quotient representation leaves the induced character correspondences
unchanged.

\subsection{Clifford correspondences and block criteria}

Let $(\mathcal P,\mathcal P')$ realize
$(G,N,\theta)_\HH\ge_c(H,M,\phi)_\HH$, and let
$N\leq W\leq G_\theta$. If $\mathcal Q$ is an irreducible projective
representation of $W/N$ with factor set inverse to that of $\mathcal P_W$,
then the induced correspondence is given by
\begin{equation}\label{pre:eq:tensor-correspondence}
 \tau_W\bigl(\tr(\mathcal Q\otimes \mathcal P_W)\bigr)=
 \tr\bigl(\mathcal Q\otimes \mathcal P'_{W\cap H}\bigr).
\end{equation}
Here $\mathcal Q$ is transported through the natural isomorphism
$W/N\cong(W\cap H)/M$. Clifford theory parametrizes both character
sets by the same irreducible projective representations. Consequently,
$\tau_W$ is a bijection satisfying
\begin{equation}\label{pre:eq:relative-degree}
 \frac{\tau_W(\chi)(1)}{\phi(1)}=
 \frac{\chi(1)}{\theta(1)}.
\end{equation}
For $a=(h,\gamma)\in(H\times\HH)_\theta$, equality of the
comparison functions yields
\begin{equation}\label{pre:eq:tensor-covariance}
 \tau_{W^h}(\chi^{h\gamma})=
 \tau_W(\chi)^{h\gamma},\qquad W^h=h^{-1}Wh.
\end{equation}
Indeed, transport replaces the quotient factor by
$\mathcal Q^a\mu_a$ on the source and by $\mathcal Q^a\mu'_a$ on
the local side. These agree under the quotient identification;
see also \cite[Lemma~1.9(a)]{NSV20}.

For an arbitrary normal subgroup $N\nrm W$ and $\theta\in\Irr(N)$,
the usual Clifford correspondence is the induction bijection
\[
 \Irr(W_\theta\mid\theta)\longrightarrow\Irr(W\mid\theta),
 \qquad\eta\longmapsto\eta^W.
\]
The tensor correspondence \eqref{pre:eq:tensor-correspondence}
applies when the bottom character is invariant in the group under
consideration. The block condition in Definition~\ref{def:full}
refers to this specified tensor correspondence.

For block membership, we use the central-character criterion
\cite{Nav98}. If $C$ is the $X$-conjugacy class of $x$ and
$\chi\in\Irr(X)$, the central character $\omega_\chi$ satisfies
\[
 \omega_\chi(C^+)=\frac{|C|\chi(x)}{\chi(1)}\in\OO,
 \qquad C^+=\sum_{y\in C}y.
\]
Two irreducible characters belong to the same block precisely when
these values agree modulo $J(\OO)$ for every class sum, or,
equivalently, when their central characters induce the same map
from $Z(\OO X)$ to $k$. The criterion follows because the center
of each block of $kX$ is local, while central block idempotents
distinguish different blocks.

For the associated projective representations, we use the trace
criteria of \cite[Lemma~4.2 and Theorem~4.4]{NS14}. Their hypotheses
on the local group, integrality and defect groups are verified for
the chosen representations at each application.

To deduce preservation of height zero from
\eqref{pre:eq:relative-degree}, one must also identify the defect
groups of the relevant blocks. Indeed, if $\chi$ and
$\chi'=\tau_W(\chi)$ belong to blocks with a common defect group
$Q\leq W\cap H$, then \eqref{pre:eq:height} gives
\[
 p^{\htc(\chi)-\htc(\chi')}=
 \frac{\chi(1)_p}{\chi'(1)_p}\,|W:W\cap H|_p^{-1}.
\]
\subsection{Dade algebras and fusion homomorphisms}
\label{pre:sec:fusion}

For general background on $G$-algebras and their use in modular
representation theory, we refer to Th\'evenaz \cite{The95}.

Recall that $k$ is a finite field of characteristic $p$, and let
$P$ be a finite $p$-group. A \emph{Dade $P$-algebra} over $k$
is a full matrix $k$-algebra $S$ with a right $P$-action by
$k$-algebra automorphisms and a $P$-stable $k$-basis containing a
$P$-fixed element. For $Q\leq P$, let $S^Q$ denote the fixed-point
subalgebra and define
\[
 \operatorname{Tr}_Q^P(x)=\sum_{u\in Q\backslash P}x^u
 \qquad(x\in S^Q),
\]
where the sum uses coset representatives. The \emph{Brauer quotient}
and the canonical Brauer homomorphism are
\[
 S(P)=S^P\Big/\sum_{Q<P}\operatorname{Tr}_Q^P(S^Q),
 \qquad \Br_P:S^P\longrightarrow S(P).
\]
There is a unique homomorphism $\rho:P\to S^\times$ such that
$x^u=\rho(u)^{-1}x\rho(u)$. Moreover, $S(P)$ is a full matrix
$k$-algebra; see \cite[Section~4.1 and Lemma~4.1]{Fu26}.
Fix identifications $S=\End_k(V)$ and $S(P)=\End_k(W)$.
Restriction of scalars embeds these algebras into
$\End_{\mathbb F_p}(V)$ and $\End_{\mathbb F_p}(W)$, respectively.

\begin{definition}\label{pre:def:fusion-homomorphism}
Relative to these identifications, a \emph{fusion homomorphism}
for $S$ is a group homomorphism
\[
 \Phi:N_{\GL_{\mathbb F_p}(V)}(\rho(P))
       \longrightarrow\GL_{\mathbb F_p}(W)
\]
such that
\[
 \Phi(c)=\Br_P(c)\qquad\bigl(c\in(S^P)^\times\bigr),
\]
where the natural algebra embeddings are understood.
\end{definition}

Existence follows from \cite[Theorem~4.2]{Fu26}; the definition
does not impose uniqueness.
When the scalar map identifies $S(P)$ with $k$, we take $W=k$.
Writing $m_z^V$ and $m_z^k$ for scalar multiplication by $z$ on
$V$ and $k$, respectively, the defining identity gives
\[
 \Phi(m_z^V)=m_z^k\qquad(z\in k^\times).
\]
Applied to the residual multiplicity algebra in Section~\ref{sec:lift}, this
identity supplies the scalar normalization of the semilinear
implementing operators.

\subsection{Corner algebras}
\label{pre:sec:corners}

\begin{definition}\label{pre:def:corner-algebra}
Let $R$ be a commutative ring, let $A$ be a unital $R$-algebra,
and let $e\in A$ be an idempotent. The \emph{corner algebra}
associated with $e$ is
\[
 eAe=\{eae\mid a\in A\},
\]
with addition and multiplication inherited from $A$, identity element
$e$, and $R$-algebra structure given by $r\mapsto re$.
\end{definition}

For $u\in(eAe)^\times$, its inverse in the corner algebra satisfies
$uu^{-1}=u^{-1}u=e$. We use this convention whenever inverses are
taken in a corner algebra.

\section{Blocks with central defect}\label{sec:central}\label{app:dgn}

We keep the hypotheses and notation of Theorem~A. Thus
$N\leq M$ are normal subgroups of $A$, the quotient $M/N$ is a
$p$-group, and $b\in\Bl(N)$ has defect group $Z\leq Z(M)$.
The character $\theta\in\Irr(b)$ is $M$-invariant and satisfies
$\theta^a\in\theta^\HH$ for every $a\in A$.
Let $B$ be the unique block of
$M$ covering $b$, and let $D$ be a defect group of $B$. Recall that
$H=N_A(D)$, $L=N_N(D)$ and $M'=N_M(D)$, and that $\phi$
is the generalized DGN correspondent of $\theta$ in $L$.

We work over a sufficiently large splitting $p$-modular system
$(K,\OO,k)$ as in Section~\ref{sec:prelim}, with $\OO$ complete
and $k=\OO/J(\OO)$.

\subsection{The central-defect configuration}

The generalized DGN configuration of \cite[Section~5]{NS14} gives
\begin{equation}\label{dgn:eq:basic-groups}
 M=ND,\quad N\cap D=Z,\quad M'=LD,\quad
 L/Z=C_{N/Z}(D/Z).
\end{equation}
These identities describe the global and local groups in terms of
the same defect group $D$.

\subsection{Defect groups and stabilizer factorizations}

The Galois condition on $\theta$ yields the following Frattini-type
factorizations.

\begin{lemma}\label{dgn:lem:central91-frattini}
For every $\gamma\in\HH$, the group $D$ is a defect group of $B^\gamma$.
Moreover,
\[
 A=MH=NH,\qquad A_\theta=NH_\theta.
\]
\end{lemma}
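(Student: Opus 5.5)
The first assertion follows from Lemma~\ref{pre:lem:galois-defects} applied to the group $X=M$ and the block $B$. Since $D$ is a defect group of $B$, it is also a defect group of $B^\gamma$ for every $\gamma\in\HH$. No further argument is needed here. We will also need to know which blocks are involved. By \eqref{pre:eq:fiber-transport}, $b^\gamma$ is the block of $\theta^\gamma$. The block $B^\gamma$ covers $b^\gamma$, because covering is defined through restriction of characters and this commutes with $\gamma$. Moreover, $B^\gamma$ is the unique block of $M$ covering $b^\gamma$, since $M/N$ is a $p$-group.

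For the factorization $A=NH$, we take $a\in A$. By hypothesis, $\theta^a=\theta^\gamma$ for some $\gamma\in\HH$, and hence $b^a=b^\gamma$. Because $M\nrm A$, the block $B^a$ is the unique block of $M$ covering $b^a=b^\gamma$. Therefore $B^a=B^\gamma$. Conjugation sends defect groups to defect groups, so $D^a$ is a defect group of $B^a=B^\gamma$. By the first part, $D$ is also a defect group of $B^\gamma$. Hence $D^a=D^m$ for some $m\in M$, which gives $am^{-1}\in N_A(D)=H$ and $a\in HM=MH$. To pass from $M$ to $N$, we use $M=ND$ from \eqref{dgn:eq:basic-groups} and $D\le H$. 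These give $MH=NDH=NH$.

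For $A_\theta=NH_\theta$, the inclusion $\supseteq$ is clear, since $\theta$ is $N$-invariant. Conversely, let $a\in A_\theta$ and write $a=nh$ with $n\in N$ and $h\in H$. Then $\theta^h=\theta^{n^{-1}a}=\theta$, because $\theta$ is $N$-invariant. Thus $h\in H_\theta$, and so $a\in NH_\theta$.

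The only point needing care is the identification $B^a=B^\gamma$. This rests on two facts: Galois conjugation commutes with covering of blocks, and $B$ is the unique block covering its image. The uniqueness holds because $M/N$ is a $p$-group, as the statement of Theorem~A already presupposes. Everything else is a routine Frattini argument.
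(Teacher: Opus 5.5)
Your proposal is correct and follows the paper's argument. You use the Frattini argument via $\theta^a=\theta^\gamma$, uniqueness of the covering block to get $B^a=B^\gamma$, $M$-conjugacy of defect groups, and then $M=ND$ with $D\le H$; the step for $A_\theta=NH_\theta$ is the same one-line argument. The only difference is that you obtain the first assertion by citing Lemma~\ref{pre:lem:galois-defects} (the Brauer-homomorphism characterization), whereas the paper re-runs the argument with relative trace maps; both are legitimate.
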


\begin{proof}
Let $e\in Z(\OO M)$ be the block idempotent corresponding to $B$,
and fix $\gamma\in\HH$. The coefficient action of $\gamma$
sends $e$ to the block idempotent of $B^\gamma$ and commutes
with every relative trace map
$\operatorname{Tr}_P^M:(\OO M)^P\to(\OO M)^M$, where $P\leq M$.
Indeed,
\[
 \operatorname{Tr}_P^M(u^\gamma)
   =\operatorname{Tr}_P^M(u)^\gamma
 \qquad\bigl(u\in(\OO M)^P\bigr),
\]
since $\gamma$ acts on coefficients and fixes the group elements.
As $\gamma$ is invertible, $e$ belongs to the image of
$\operatorname{Tr}_P^M$ if and only if $e^\gamma$ does.
Defect groups are the minimal subgroups with this property
\cite[Definition~6.1.1]{Lin18II}. Thus $B$ and $B^\gamma$ have
the same defect groups, proving the first assertion.

Let $a\in A$. Choose $\gamma\in\HH$ such that
$\theta^a=\theta^\gamma$. Then $b^a=b^\gamma$, and uniqueness
of the covering block gives $B^a=B^\gamma$. Hence $D^a$ and
$D$ are defect groups of the same block. By
\cite[Theorem~6.1.2(iii)]{Lin18II}, there is $m\in M$ such
that $D^{am}=D$. Thus $am\in H$ and $A=HM=MH$, since
$M\nrm A$. Now $M=ND$ and $D\leq H$ give $A=NH$.

Finally, $N\leq A_\theta$. If $xh\in A_\theta$, with
$x\in N$ and $h\in H$, then $\theta^h=\theta^{xh}=\theta$,
so $h\in H_\theta$. This proves $A_\theta=NH_\theta$.
No $A$-stability of $B$ is needed.
\end{proof}

\subsection{Matrix algebras over the central group algebra}

Let $b'\in\Bl(L)$ be the block covered by the Brauer correspondent
of $B$, and write $q$ for passage to the central quotient by $Z$.
The blocks $q(b)$ and $q(b')$ have defect zero. Denote their unique
ordinary irreducible characters by $\theta_0$ and $\phi_0$,
respectively. They correspond under the $D/Z$-DGN correspondence.
Set
\[
 d=\theta_0(1),\qquad e=\phi_0(1),\qquad
 n=[(\theta_0)_{L/Z},\phi_0].
\]

\begin{lemma}\label{dgn:lem:matrix-base}
The ring $R=\OO Z$ is a complete commutative local ring, with
maximal ideal
\[
 J(R)=J(\OO)R+\langle z-1:z\in Z\rangle.
\]
There are isomorphisms of $R$-algebras
\[
 \OO Nb\cong M_d(R),\qquad \OO Lb'\cong M_e(R),
\]
where $R$ acts through its central embeddings in the block algebras.
\end{lemma}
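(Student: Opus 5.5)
The plan has three parts: show that $R$ is complete local, prove the statement for $\OO Nb$, and then deduce the statement for $\OO Lb'$ by the same argument.

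\emph{The ring $R$.} $Z$ is a $p$-group and $\OO$ is complete local with residue field $k$ of characteristic $p$, so $R=\OO Z$ is a finitely generated free $\OO$-module. Hence $R$ is $J(\OO)$-adically complete. Moreover $kZ$ is local, with maximal ideal the augmentation ideal; this ideal is nilpotent because $(z-1)^{|z|}=z^{|z|}-1=0$ in characteristic $p$. The kernel of $R\to kZ\to k$ is therefore $J(\OO)R+\langle z-1\rangle$. Every element outside this kernel is a unit, since its image in $kZ$ is a unit and we can lift using completeness (Nakayama). So $R$ is local with the displayed maximal ideal.

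\emph{The block $\OO Nb$.} Since $Z\le Z(M)\cap N$ is central in $N$ and is the defect group of $b$, we know two things:
\begin{itemize}
\item[(a)] $q(b)$ is a block of defect zero of $N/Z$, so $\OO(N/Z)\bar b\cong M_d(\OO)$, where $\bar b=q(b)$ and $d=\theta_0(1)$;
\item[(b)] $\OO Nb$ is a free $R$-module, being a direct summand of $\OO N$, which is $R$-free on a transversal of $Z$.
\end{itemize}
Reduction modulo the ideal $I=\langle z-1\rangle R$ gives $\OO Nb/I\,\OO Nb\cong \OO(N/Z)\bar b$. Here we use the standard fact that $q$ maps $b$ to a single block and not a sum of blocks, because $Z$ is a central $p$-subgroup. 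Now:
\begin{itemize}
\item[(1)] Lift the matrix units of $M_d(\OO)$ to orthogonal idempotents $f_1,\dots,f_d$ in $\OO Nb$ with sum $1$. This uses idempotent lifting over the complete ring $R$: $\OO Nb$ is $J(R)$-adically complete and $I\subseteq J(R)$.
\item[(2)] Lift the off-diagonal units $f_1\OO Nb f_i\ni u_{1i}$, $f_i\OO Nbf_1\ni u_{i1}$ so that $u_{1i}u_{i1}=f_1$ and $u_{i1}u_{1i}=f_i$. This is again done by successive approximation, or by noting that $f_1\OO Nb f_i\cong f_1\OO Nbf_1$ as modules.
\item[(3)] Conclude $\OO Nb\cong M_d(f_1\OO Nbf_1)$. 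It remains to show $f_1\OO Nbf_1=R f_1$.
\end{itemize}
For the last point, $f_1\OO Nbf_1$ is a free $R$-module: $\OO Nb$ is free of rank $d^2|Z|/|Z|$ over $R$, i.e.\ of rank $d^2$ after comparing $\OO$-ranks $\dim_K K Nb=|Z|d^2$. This last equality holds because $\Irr(b)$ consists of $|Z|$ characters of degree $d$: by (a) and the central-character fibers, $\Irr(b)\leftrightarrow\Irr(Z)$ via $\theta_0\cdot\lambda$-type extensions. Hence $f_1\OO Nbf_1$ is projective of rank $1$ over the local ring $R$, hence free of rank one. It contains $Rf_1$, and the inclusion is an isomorphism modulo $J(R)$, so by Nakayama it is equal.

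\emph{The block $\OO Lb'$.} The identical argument applies with $L$, $b'$ and $e=\phi_0(1)$. It uses that $Z\le Z(L)$, that $b'$ has defect group $Z$ (since $q(b')$ has defect zero and $Z$ is a central $p$-subgroup of $L$), and the facts about $q(b')$ just recorded.

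\emph{Main obstacle.} I expect the hardest step to be the rank count $\dim_K KNb=|Z|d^2$, equivalently the exact description of $\Irr(b)$ as $|Z|$ characters of degree $d$. I would prove it by showing that $KNb\cong K Z\otimes_K M_d(K)$ fiberwise over $\lambda\in\Irr(Z)$. The route is to use the standard fact that a block with central defect group $Z$ has $\Irr(b)=\{\chi_\lambda\}$ with exactly one character in each central-character fiber, all of degree $d$, citing \cite{Nav98} for this central-defect structure (it follows from the fact that $b$ covers a unique block of $N/Z$ and the projective indecomposable has Cartan number $|Z|$). The alternative is to verify freeness of $f_1\OO Nbf_1$ directly, as a direct summand of the free $R$-module $\OO Nb$.
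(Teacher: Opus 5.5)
Your proof is correct and follows essentially the paper's route: $\OO Nb$ is $R$-free as a summand of $\OO N$, a system of matrix units is lifted, and Nakayama's lemma finishes the argument. The structural differences are minor. You lift from $M_d(\OO)=\OO Nb/I\,\OO Nb$ rather than from $M_d(k)=\OO Nb/J(R)\OO Nb$. You also finish by showing $f_1\OO Nbf_1=Rf_1$, whereas the paper shows that $M_d(R)\to\OO Nb$ is a surjection between free $R$-modules of the same rank.

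The step you single out as the main obstacle is not needed. Since $\OO Nb$ is $R$-free and $R/I\cong\OO$, your identification $\OO Nb/I\,\OO Nb\cong M_d(\OO)$ already gives $R$-rank $d^2$; the paper uses $\OO Nb/J(R)\OO Nb\cong k(N/Z)q(b)\cong M_d(k)$ in the same way. So you do not need to import the description of $\Irr(b)$ for central defect blocks, which the paper in fact deduces afterwards from this lemma.
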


\begin{proof}
Since $Z$ is an abelian $p$-group, the algebra $kZ$ is local
and its augmentation ideal is nilpotent. It follows that $R$
is local with the displayed maximal ideal, and that the
$J(R)$-adic and $J(\OO)R$-adic topologies agree. As an
$\OO$-module, $R$ has the elements of $Z$ as a basis. Thus
\[
 R=\bigoplus_{z\in Z}\OO z
\]
is a free $\OO$-module of finite rank $|Z|$. Since $\OO$ is
$J(\OO)$-adically complete and $Z$ is finite, coefficientwise
completeness gives an isomorphism
\[
 R\longrightarrow\varprojlim_{r\geq1}R/J(\OO)^rR.
\]
Hence $R$ is complete for the $J(\OO)R$-adic topology and,
equivalently, for the $J(R)$-adic topology.

Choose a set $T$ of representatives for the cosets of $Z$ in $N$.
Then $\OO N=\bigoplus_{t\in T}Rt$ is a free $R$-module of
finite rank $|N:Z|$. Multiplication by $b$ is an $R$-linear
idempotent endomorphism of $\OO N$, with image $\OO Nb$.
Consequently, $\OO Nb$ is a finitely generated projective
$R$-module, and hence is free because $R$ is local.
The block correspondence for a central
$p$-quotient gives
\[
 (\OO Nb)/J(R)(\OO Nb)
       \cong k(N/Z)q(b)\cong M_d(k).
\]
Thus $\OO Nb$ has $R$-rank $d^2$.

Lift the diagonal matrix units of $M_d(k)$ to orthogonal
idempotents $E_1,\ldots,E_d$ with sum $b$. Lifting the matrix
units in the corresponding corners gives
$x_i\in E_i(\OO Nb)E_1$ and $y_i\in E_1(\OO Nb)E_i$;
take $x_1=y_1=E_1$. The element $y_ix_i$ reduces to the identity
of its corner and is therefore invertible there. Replacing $y_i$
by $(y_ix_i)^{-1}y_i$, we may assume that $y_ix_i=E_1$.
The idempotent $E_i-x_iy_i$ lies in the Jacobson radical and
is consequently zero. Hence the elements $x_iy_j$ form a
system of matrix units.

Together with the central embedding of $R$, these matrix units
define an $R$-algebra homomorphism
$M_d(R)\longrightarrow\OO Nb$ whose reduction modulo $J(R)$
is an isomorphism. Nakayama's lemma makes this map surjective.
Since both modules are free of the same rank, the map is an
isomorphism. The same argument applies to $\OO Lb'$.
\end{proof}

\subsection{Central-character fibers}

For each $\lambda\in\Irr(Z)$, Lemma~\ref{dgn:lem:matrix-base}
shows that the fiber above $\lambda$ contains exactly one
irreducible character $\theta_\lambda$ in $b$ and exactly one
irreducible character $\phi_\lambda$ in $b'$. Their degrees are
$d$ and $e$, respectively. The relation with the defect-zero
quotient is given by
\begin{equation}\label{dgn:eq:gamma-fiber}
 \theta_\lambda(g)=
 \begin{cases}
 \lambda(g_p)\theta_0(gZ),&g_p\in Z,\\
 0,&g_p\notin Z.
 \end{cases}
\end{equation}
Here $g_p$ denotes the $p$-part of $g$. The analogous formula
holds for $\phi_\lambda$. This is the central linear-fiber
form of \cite[Lemma~5.6]{NS14}. On $p'$-elements, the formula
agrees with the inflation of $\theta_0$, and the degrees agree.

Now let $\lambda$ be the unique irreducible constituent of
$\theta_Z$. The generalized DGN correspondence respects the
central-character fibers \cite[proof of Theorem~5.7]{NS14}.
Consequently, $\theta=\theta_\lambda$ and
$\phi=\phi_\lambda$.

Formula~\eqref{dgn:eq:gamma-fiber} is compatible with group
conjugation and Galois automorphisms. Together with the
equivariant defect-zero DGN correspondence, it gives
\begin{equation}\label{dgn:eq:bottom-stabilizers}
 (H\times\HH)_\theta=(H\times\HH)_\phi.
\end{equation}
For the reverse implication, use the inverse defect-zero DGN
correspondence and the fact that both characters lie above the
same $\lambda$.

\subsection{The multiplicity algebra}

We now pass from the two block algebras to the algebra recording
the restriction multiplicity $n$. The Dade algebras used below
arise from the theory of endopermutation modules
\cite{Dad78a,Dad78b}; for general background on block algebras,
see \cite{Lin18}.

\begin{lemma}\label{dgn:lem:multiplicity-algebra}
Set $i=bb'$. The map $\OO Lb'\longrightarrow i\OO Ni$,
$v\longmapsto bv$, is an injective $R$-algebra homomorphism.
With respect to this embedding,
\[
 S=C_{i\OO Ni}(\OO Lb')\cong M_n(R).
\]
The reduction $\bar S=S/J(R)S$ is naturally the multiplicity
algebra of the defect-zero DGN quotient configuration. If
$P=D/Z$, then $\bar S$ is a Dade $P$-algebra with
$\bar S(P)=k$, and $p\nmid n$.
\end{lemma}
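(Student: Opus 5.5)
We first check that $v\mapsto bv$ is well defined into $i\OO Ni$. Since $L\leq N$ and $b$ is central in $\OO N$, we have $bv=bvb$. Because $b'$ is the identity of $\OO Lb'$, we get $bv=bb'vb'=ivi$, using $b'v=vb'=v$ and the fact that $b$ commutes with $b'$. Multiplicativity follows from the centrality of $b$. The map is $R$-linear, since $Z\leq L\cap N$ acts centrally on both sides.

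For injectivity, we reduce modulo $J(R)$. Both $\OO Lb'$ and its image are compatible with the quotient by $Z$, and $\OO Lb'\cong M_e(R)$ by Lemma~\ref{dgn:lem:matrix-base}. Any nonzero kernel would be an ideal of $M_e(R)$, hence of the form $M_e(I)$ for an ideal $I\ne0$ of $R$. To rule this out we pass to $K$ and check on each central-character fiber $\lambda$ separately. Under $\lambda$, the image of $b$ is the idempotent $e_{\theta_\lambda}$ and the image of $b'$ is $e_{\phi_\lambda}$. The product $e_{\theta_\lambda}e_{\phi_\lambda}$ is nonzero because $\phi_\lambda$ is a constituent of $(\theta_\lambda)_L$: the multiplicity $n$ is nonzero and is the same on every fiber, by \eqref{dgn:eq:gamma-fiber} and the equality $[(\theta_0)_{L/Z},\phi_0]=n$. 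Hence $K\OO Lb'\cong\prod_\lambda M_e(K)$ embeds fiberwise, and since $\OO Lb'$ is $\OO$-torsion free, the map is injective.

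Next we identify the centralizer. Write $E=i\OO Ni$. We regard $\OO Nb\cong M_d(R)$ as $\End_R(V)$ with $V$ free of rank $d$. Then $E=\End_R(iV)$, and $iV$ is a projective, hence free, $R$-module. The subalgebra $\OO Lb'\cong M_e(R)=\End_R(U)$ acts unitally on $iV$. By Morita theory over the local ring $R$, we get $iV\cong U\otimes_R X$ with $X=\Hom_{M_e(R)}(U,iV)$ free, and the centralizer is then $S\cong\End_R(X)\cong M_{n'}(R)$. To show $n'=n$ we compute ranks after tensoring with $K$ on one fiber: $(\theta_\lambda)_L$ contains $\phi_\lambda$ with multiplicity exactly $n$, so the rank of $X$ is $n$. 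Freeness and the commutation of centralizers with reduction follow because $X$ is a direct summand. In particular, $S/J(R)S\cong C_{\bar i k(N/Z)\bar i}(k(L/Z)q(b'))$, which is the residual multiplicity algebra of the defect-zero configuration.

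The Dade-algebra assertions are the main obstacle. The group $D$ normalizes $L$ and fixes $b$ and $b'$, so it acts on $S$ by conjugation, and this action factors through $P=D/Z$ because $Z$ is central. For the residual algebra, attached to the defect-zero block $q(b)$ and the $P$-DGN correspondent $q(b')$, we invoke the theory of \cite{Dad78a,Dad78b} together with \cite[Section~5]{NS14}. In that theory the multiplicity module is an endopermutation module with a $P$-stable basis containing a fixed element, and $\bar S(P)\cong k$ because the generalized Glauberman/DGN multiplicity is characterized by $\Br_P$ of the multiplicity algebra being a $1\times1$ matrix algebra. The condition $p\nmid n$ then follows: the dimension of a Dade algebra is congruent mod $p$ to the square of the dimension of its Brauer quotient, so $n^2\equiv1\pmod p$.
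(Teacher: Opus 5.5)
Your proposal is correct and takes essentially the paper's route. Like the paper, you write $\OO Nb=\End_R(V)$, split $iV\cong R^e\otimes_R X$ with $X$ free, compute $\operatorname{rank}_R X=n$ on one generic fiber, identify $S=\End_R(X)$ compatibly with reduction, cite the Dade-algebra property of the defect-zero multiplicity algebra, and conclude $p\nmid n$ from $n^2=\dim_k\bar S\equiv\dim_k\bar S(P)=1\pmod p$. The paper differs only in three small ways:
\begin{enumerate}
\item It gets injectivity for free from the faithful action of $M_e(R)$ on $R^e\otimes_R X$ with $X\neq0$, so your separate fiberwise argument over $K$ (and the opening ``reduce modulo $J(R)$'', which alone could not rule out a kernel $M_e(I)$ with $I\subseteq J(R)$) is redundant.
\item It computes the rank on the augmentation fiber rather than an arbitrary $\lambda$.
\item It cites \cite[Lemma~11.9]{Lad11} for $\bar S(P)=k$.
\end{enumerate}
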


\begin{proof}
Write $\OO Nb=\End_R(V)$, where $V$ is free of rank $d$.
The matrix units of $\OO Lb'\cong M_e(R)$ give a decomposition
\[
 iV\cong R^e\otimes_R W_0,
\]
where $W_0$ is finitely generated projective over $R$, and hence
free since $R$ is local. After
specialization along the augmentation $R\to\OO$, the generic
fibers of $V$ and $iV$ afford the inflations of $\theta_0$ and
$n\phi_0$, respectively. Thus $W_0$ has rank $n$. The double
centralizer calculation gives
$S=\End_R(W_0)\cong M_n(R)$. Since $n>0$, the action of
$M_e(R)$ on $iV$ is faithful, proving that the local block
homomorphism is injective.

The matrix-unit decomposition also shows that formation of this
centralizer commutes with coefficient base change. Its reduction
is therefore the multiplicity algebra of the defect-zero quotient
configuration. By \cite[Lemma~11.9]{Lad11}, this is a Dade
$P$-algebra with $\bar S(P)=k$. Counting orbits on a
$P$-stable basis gives
\[
 n^2=\dim_k\bar S\equiv\dim_k\bar S(P)=1\pmod p,
\]
so $p\nmid n$.
\end{proof}

The restriction multiplicity is the same on every
central-character fiber. Indeed, substituting
\eqref{dgn:eq:gamma-fiber} and its local analogue into the
inner product cancels the common root-of-unity factors, and
the remaining sum descends to $L/Z$. Hence
\[
 [ (\theta_\lambda)_L,\phi_\lambda]
   =[(\theta_0)_{L/Z},\phi_0]=n.
\]
For the remainder of the paper, fix a system of matrix units for
$S\cong M_n(R)$. These matrix units need not be fixed by the
group or Galois actions.

\section{Semilinear realizations and integral lifting}\label{sec:lift}

In this section, we construct a semilinear realization of the mixed
stabilizer on the multiplicity algebra. We first construct a residual
realization and then lift it integrally using a determinant normalization.
Compatibility with blocks at all intermediate groups will be established
in the subsequent sections.

\subsection{The action of the mixed stabilizer}
\label{dgn:sec:central91-mixed}

We use the central-defect configuration of Section~\ref{sec:central}, with
$R=\OO Z$, $i=bb'$ and $S=C_{i\OO Ni}(\OO Lb')$, and the fixed matrix
form $S\cong M_n(R)$. Set
\[
 \mathcal G=\Gal(K/\Q_p),\qquad
 \Gamma=(H\times\mathcal G)_\theta,\qquad P=D/Z,
\]
where $K$ in $\Gal(K/\Q_p)$ is the coefficient field of the
$p$-modular system $(K,\OO,k)$.
For $a=(h,\gamma)\in\Gamma$, the natural right action on the group
algebra is given by
\begin{equation}\label{dgn:eq:c91-natural-action}
 \beta_{(h,\gamma)}\left(\sum c_g g\right)
          =\sum\gamma(c_g)h^{-1}gh.
\end{equation}
Let $\varepsilon_a$ denote the map on $M_n(R)$ that acts entrywise
through $\beta_a|_R$. Thus $\varepsilon_a$ fixes the chosen matrix units.

\subsection{The residual Dade algebra}

In the quotient configuration, bars denote the images induced by
coefficient reduction $\OO\to k$ and the quotient maps by $Z$.

\begin{lemma}\label{dgn:lem:c91-natural-residual}
Every $a\in\Gamma$ stabilizes $R$, $J(R)$ and $S$, and fixes $i$.
Consequently, $\beta$ restricts to an $R$-semilinear right action on
$S$ and induces a semilinear right action on $\bar S=S/J(R)S$.
There is a natural $\Gamma$-equivariant isomorphism
\begin{equation}\label{dgn:eq:c91-residual-centralizer}
 \bar S\cong
 C_{\bar i\,k(N/Z)\bar i}\bigl(k(L/Z)\bar b'\bigr).
\end{equation}
The algebra on the right is the multiplicity algebra of the defect-zero
quotient configuration. It is a Dade $P$-algebra, and the scalar map
identifies its Brauer quotient with $k$:
\begin{equation}\label{dgn:eq:c91-residual-brauer}
 \bar S(P)\cong k,\qquad
 \operatorname{Br}_P(z1)=z\quad(z\in k).
\end{equation}
\end{lemma}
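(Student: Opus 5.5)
The plan is to verify the stability assertions directly from the definition of $\beta$, then pass to the quotient and identify $\bar S$ with the residual centralizer by base change. The Dade property and \eqref{dgn:eq:c91-residual-brauer} will then come from Lemma~\ref{dgn:lem:multiplicity-algebra} together with the scalar compatibility of $\Br_P$.

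For the stability claims, let $a=(h,\gamma)\in\Gamma$. Since $Z\le Z(M)$ is characteristic in $D$ and $h\in H=N_A(D)$, we need $Z^h=Z$. I would argue that $Z$ is the defect group of $b$ and $Z=N\cap D$ by \eqref{dgn:eq:basic-groups}. Both $N$ and $D$ are $h$-stable, so $Z^h=Z$. Since $\gamma$ preserves $\OO$, $\beta_a$ maps $R=\OO Z$ onto itself. It preserves $J(R)$ because $J(R)$ is the unique maximal ideal and $\beta_a|_R$ is a ring automorphism.

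Next, $\theta^{h\gamma}=\theta$, so $b^{a}=b$, since the block idempotent transforms as in Lemma~\ref{pre:lem:galois-defects} and by conjugation. For $b'$, $h$ normalizes $L=N_N(D)$ and $D$. Because $a$ fixes $\theta$, it fixes $\phi$ by \eqref{dgn:eq:bottom-stabilizers}, so it fixes $\bl(\phi)=b'$. Hence $i=bb'$ is fixed. Then $\beta_a$ restricts to a ring automorphism of $i\OO Ni$ preserving the image of $\OO Lb'$. It therefore preserves the centralizer $S$, and the action is $\beta_a|_R$-semilinear. Finally, $J(R)S$ is stable, so the action descends to $\bar S$. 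The right-action property is inherited from \eqref{dgn:eq:c91-natural-action}.

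For \eqref{dgn:eq:c91-residual-centralizer}, I would use the fact recorded in the proof of Lemma~\ref{dgn:lem:multiplicity-algebra} that formation of the centralizer commutes with base change along $R\to R/J(R)=k$. Concretely, $\OO Nb\otimes_R k\cong k(N/Z)\bar b$ and $\OO Lb'\otimes_R k\cong k(L/Z)\bar b'$. These isomorphisms are induced by $\OO\to k$ together with the quotient maps $N\to N/Z$ and $L\to L/Z$. The matrix-unit decomposition $iV\cong R^e\otimes_R W_0$ shows that $S\otimes_R k\to C_{\bar i k(N/Z)\bar i}(k(L/Z)\bar b')$ is an isomorphism, with both sides $\End_k(W_0\otimes k)$.

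The map is natural because it is induced by the canonical reduction maps. Each $a\in\Gamma$ acts compatibly on both sides: on the left through $\beta_a$, and on the right through $\bar\gamma$ on coefficients together with conjugation by $hZ$. Consequently the isomorphism is $\Gamma$-equivariant.

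The Dade $P$-algebra structure comes from conjugation by $D$, or equivalently by $P=D/Z$. Note that $D$ centralizes $L/Z$ modulo $Z$ and normalizes $N$, so it normalizes the residual centralizer. Lemma~\ref{dgn:lem:multiplicity-algebra}, via \cite[Lemma~11.9]{Lad11}, then gives the Dade property and $\bar S(P)\cong k$.

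The main point needing care is the normalization in \eqref{dgn:eq:c91-residual-brauer}. The element $1\in\bar S^P$ maps to a nonzero element of the one-dimensional algebra $\bar S(P)$, since it is a unital quotient and $\bar S(P)\neq0$. The composite $k\to\bar S^P\to\bar S(P)$ is therefore a unital $k$-algebra map into a one-dimensional algebra, hence an isomorphism. Identifying $\bar S(P)$ with $k$ through it yields $\Br_P(z1)=z$.

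The remaining subtlety I expect is checking that the $P$-action on the residual algebra agrees with the one used in \cite{Lad11}. For this I would confirm that conjugation by $D$ preserves $\bar i$ and $k(L/Z)\bar b'$. That holds because $D$ normalizes $N$ and $L$ and fixes $b$, which is $M$-stable, and $b'$.
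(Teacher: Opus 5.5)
Your proposal is correct and follows essentially the same route as the paper: $a=(h,\gamma)$ stabilizes $Z=N\cap D$, $L$, $b$ and $b'$ (the last via equivariance of the DGN correspondence), hence $i$ and the centralizer $S$; the residual isomorphism comes from the matrix-unit decomposition $iV\cong R^e\otimes_R W$ and base change along $R\to k$; and Ladisch's Lemma~11.9 gives the Dade property, with your unital-map argument making the scalar normalization $\operatorname{Br}_P(z1)=z$ explicit. The one slip is the aside that $Z$ is characteristic in $D$, which is unjustified in general but unnecessary, since $Z=N\cap D$ with $N$ and $D$ both $h$-stable already gives $Z^h=Z$.
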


\begin{proof}
Let $a=(h,\gamma)\in\Gamma$. By
\eqref{dgn:eq:c91-natural-action} and the formula for the central
idempotent associated with an ordinary irreducible character,
$\beta_a(e_\theta)=e_{\theta^a}=e_\theta$.
Since group and Galois actions transport blocks, it follows that
$\beta_a(b)=b$. Equivariance of the generalized DGN correspondence
gives $\phi^a=\phi$ and hence $\beta_a(b')=b'$.
More explicitly, $\theta$ is determined by its central character
$\lambda\in\Irr(Z)$ and the corresponding defect-zero character
$\theta_0$ of the quotient. A mixed element fixing $\theta$ fixes
both $\lambda$ and $\theta_0$. Equivariance of the defect-zero DGN
correspondence then fixes the correspondent of $\theta_0$, and the
relative defect-zero fiber formula shows that $\phi$ is fixed; see
\cite[Lemma~5.6 and the proof of Theorem~5.7]{NS14}.

As $h$ normalizes $D$ and $N\nrm A$, it stabilizes
$Z=N\cap D$ and $L=N_N(D)$. The Galois action preserves $J(\OO)$,
while conjugation by $h$ sends $z-1$ to $h^{-1}zh-1$.
Thus $R$ and $J(R)$ are stable, and $i=bb'$ is fixed.
The algebra $\OO Lb'$ is also stable. Its centralizer in $i\OO Ni$,
namely $S$, is therefore stable.

To prove \eqref{dgn:eq:c91-residual-centralizer}, use the matrix-unit
decomposition underlying the integral matrix structure from Section~\ref{sec:central}:
\[
 \OO Nb=\End_R(V),\qquad iV\cong R^e\otimes_R W,
 \qquad \OO Lb'\cong M_e(R),\qquad S\cong\End_R(W).
\]
After base change along $R\to k$, the local matrix algebra acts on
the first tensor factor, and its centralizer is
$\End_k(W/J(R)W)$. The reduction map from $S$ therefore induces
the asserted isomorphism. This map is $\Gamma$-equivariant because
it is induced by the group-algebra quotient map.

Finally, $L/Z=C_{N/Z}(D/Z)$. Indeed, an element $x\in N$ normalizes
$D$ if and only if its commutators with all $d\in D$ lie in
$N\cap D=Z$, or equivalently, if $xZ$ centralizes $D/Z$.
The quotient blocks $\bar b$ and $\bar b'$ are corresponding
defect-zero blocks. The Dade-algebra argument in
\cite[Lemma~11.9]{Lad11} now gives
\eqref{dgn:eq:c91-residual-brauer}. If this result is first applied
over a smaller finite field, extending scalars preserves the permutation
basis and commutes with formation of the Brauer quotient.
\end{proof}

\subsection{Residual crossed representations}

We use right actions throughout. Products of operators are composed
as $TU=T\circ U$. In particular, the entrywise field operators satisfy
$F_\tau F_\upsilon=F_{\tau\circ\upsilon}$.

\begin{lemma}\label{dgn:lem:c91-full-residual}
Let $\bar S=M_n(k)$ be a Dade $P$-algebra over a finite field $k$,
with $\bar S(P)=k$ identified by the scalar map in
\eqref{dgn:eq:c91-residual-brauer}. Suppose that a finite group
$\Gamma$ acts semilinearly on the right on $\bar S$ and normalizes
the image of $P$ in $\Aut_k(\bar S)$. Write $\bar\beta_a$ for this
action and $\tau_a$ for its restriction to $k$. Relative to the fixed
matrix form, there is a map $\bar\sigma:\Gamma\to M_n(k)^\times$
satisfying
\begin{align}
 \bar\beta_a(X)
   &=\bar\sigma(a)^{-1}\tau_a(X)\bar\sigma(a),
       \label{dgn:eq:c91-residual-inner}\\
 \tau_{b_0}\bigl(\bar\sigma(a)\bigr)\bar\sigma(b_0)
   &=\bar\sigma(ab_0).
       \label{dgn:eq:c91-residual-crossed}
\end{align}
Here $\tau_a$ acts entrywise. Moreover, $\bar\sigma(a)=1$ whenever
$a$ acts trivially on $\bar S$.
\end{lemma}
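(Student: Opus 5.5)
The plan is to treat each $a\in\Gamma$ separately, using Skolem--Noether, and then to fix the scalar ambiguity with the fusion homomorphism of Definition~\ref{pre:def:fusion-homomorphism}. Write $F_a$ for the entrywise semilinear operator on $V=k^n$ induced by $\tau_a$, and view it as an element of $\GL_{\mathbb F_p}(V)$. Since entrywise $\tau_a$ is a ring automorphism of $M_n(k)$, the composite $X\mapsto\tau_a^{-1}(\bar\beta_a(X))$ is a $k$-linear automorphism of the central simple algebra $M_n(k)$. Skolem--Noether therefore gives an invertible $s_a$, unique up to $k^\times$, with $\bar\beta_a(X)=s_a^{-1}\tau_a(X)s_a$.

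Equivalently, the $\mathbb F_p$-linear operator $T_a=F_a s_a$ (with the composition convention fixed above) implements $\bar\beta_a$ by conjugation. Because $\Gamma$ normalizes the image of $P$, $T_a$ normalizes $\rho(P)$ up to scalars. Uniqueness of $\rho$ then forces $T_a^{-1}\rho(u)T_a=\rho(u^{a})$, since both sides are homomorphisms $P\to\bar S^\times$ implementing the same automorphism, and $\rho$ is the unique such lift. Hence $T_a\in N_{\GL_{\mathbb F_p}(V)}(\rho(P))$, and we may apply $\Phi$.

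We have $\Phi(T_a)\in\GL_{\mathbb F_p}(k)$. Since $T_a$ is $\tau_a$-semilinear and $\Phi(m_z^V)=m_z^k$, the element $\Phi(T_a)$ satisfies $\Phi(T_a)m_z^k\Phi(T_a)^{-1}=m_{\tau_a^{\pm1}(z)}^k$. So $\Phi(T_a)=m_{c_a}^k\circ\tau_a$ for a unique $c_a\in k^\times$. We normalize by replacing $s_a$ with $c_a^{-1}s_a$, adjusting for the semilinearity direction, so that $\Phi(T_a)$ is exactly the field operator $\tau_a$ on $k$. Set $\bar\sigma(a)=s_a$ after this normalization. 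Then \eqref{dgn:eq:c91-residual-inner} holds by construction. For \eqref{dgn:eq:c91-residual-crossed}, compare $T_aT_{b_0}$ and $T_{ab_0}$: both implement $\bar\beta_{ab_0}$, because $\bar\beta$ is a right action, so they differ by a scalar $m_z^V$. Applying the homomorphism $\Phi$ gives $\tau_a\tau_{b_0}$ on one side and $m^k_z\tau_{ab_0}$ on the other. Since $\tau$ is multiplicative, $z=1$. Unwinding $T_aT_{b_0}=F_as_aF_{b_0}s_{b_0}=F_{a}F_{b_0}\tau_{b_0}(s_a)s_{b_0}$ then yields the crossed identity.

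If $a$ acts trivially on $\bar S$, then $\tau_a=\mathrm{id}$ and $s_a$ is central, hence a scalar. Then $T_a=m_{s_a}$ and $\Phi(T_a)=m_{s_a}^k$, and the normalization forces $s_a=1$.

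The main obstacle is making the semilinear bookkeeping consistent: the left/right conventions in $F_\tau F_\upsilon=F_{\tau\circ\upsilon}$, the direction in which $T_a$ conjugates $m_z^V$, and the commutation $s_aF_{b_0}=F_{b_0}\tau_{b_0}(s_a)$. I expect a sign or inversion issue here, and I would first check it on the case $n=1$, $P=1$, where $\bar\sigma$ must be trivial. A secondary point is that $\Phi$ is only defined on $N_{\GL_{\mathbb F_p}(V)}(\rho(P))$. This is why the argument needs the exact identity $T_a^{-1}\rho(u)T_a=\rho(u^a)$ obtained from the uniqueness of $\rho$, rather than merely normalizing modulo scalars.
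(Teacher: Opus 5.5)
Your proposal is essentially the paper's proof. Both use semilinear Skolem--Noether implementers $T_a$, show that $T_a$ normalizes $\rho(P)$, rescale so that $\Phi(\widehat T_a)$ is a pure field operator, and use uniqueness of the normalized implementer to get $\widehat T_a\widehat T_{b_0}=\widehat T_{ab_0}$; for the normalizer step you invoke uniqueness of $\rho$, where the paper uses that $\rho(P)$ is the unique Sylow $p$-subgroup of $k^\times\rho(P)$, which is equivalent. The inversion you anticipated does occur but is harmless: $\bar\beta_a(X)=T_a^{-1}XT_a$ forces $T_a=F_{\tau_a^{-1}}s_a$, which is $\tau_a^{-1}$-semilinear, so $\Phi(T_a)=m_{c_a}F_{\tau_a^{-1}}$ and the rescaling is $s_a\mapsto\tau_a(c_a)^{-1}s_a$, after which your commutation $s_aF_{\tau_{b_0}^{-1}}=F_{\tau_{b_0}^{-1}}\tau_{b_0}(s_a)$ yields the crossed identity exactly.
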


\begin{proof}
Let $V=k^n$, and regard $\bar S$ as a subalgebra of
$\End_{\mathbb F_p}(V)$. There is a unique homomorphism
$\rho:P\to\bar S^\times$ realizing the right $P$-action:
\[
 \bar\beta_u(X)=\rho(u)^{-1}X\rho(u)\qquad(u\in P).
\]
Indeed, existence follows from $H^2(P,k^\times)=1$, and uniqueness
from $\operatorname{Hom}(P,k^\times)=1$, since $|P|$ and
$|k^\times|$ are coprime. Equivalently, one can apply the
Schur--Zassenhaus theorem to the corresponding finite central extension.

Apply \cite[Theorem~4.2]{Fu26} to $\bar S$ and its Brauer quotient $k$,
using the action of $k$ on itself by multiplication. We obtain
a homomorphism
\begin{equation}\label{dgn:eq:c91-fu-fusion}
 \Phi:N_{\operatorname{GL}_{\mathbb F_p}(V)}(\rho(P))
       \longrightarrow\operatorname{GL}_{\mathbb F_p}(k),
 \qquad
 \Phi(c)=\operatorname{Br}_P(c)\quad
       \bigl(c\in(\bar S^P)^\times\bigr).
\end{equation}
Here the natural algebra embeddings are suppressed. In particular,
if $m_z$ denotes multiplication by $z$, then
\begin{equation}\label{dgn:eq:c91-fusion-scalars}
 \Phi(m_z)=m_z\qquad(z\in k^\times).
\end{equation}
Fix such a homomorphism $\Phi$.

For $a\in\Gamma$, the semilinear form of the Skolem--Noether theorem
provides an invertible $\mathbb F_p$-linear operator $T_a$ such that
\begin{equation}\label{dgn:eq:c91-right-implementer}
 \bar\beta_a(X)=T_a^{-1}XT_a.
\end{equation}
Under this convention, $T_a$ is $\tau_a^{-1}$-semilinear: taking
$X=m_z$ gives
\[
 T_a^{-1}m_zT_a=m_{\tau_a(z)}.
\]
To see that $T_a$ belongs to the domain of $\Phi$, consider the
quotient map $\bar S^\times\to\bar S^\times/k^\times$.
The subgroup $\rho(P)$ is the unique Sylow $p$-subgroup of the
inverse image of its image under this map: that inverse image is
$k^\times\rho(P)$, with $k^\times\cap\rho(P)=1$.
Since $\bar\beta_a$ normalizes the image of the $P$-action, it
stabilizes this inverse image and hence $\rho(P)$. Thus
$T_a\in N_{\operatorname{GL}_{\mathbb F_p}(V)}(\rho(P))$.

Applying $\Phi$ to the scalar identity above and using
\eqref{dgn:eq:c91-fusion-scalars}, we obtain
\[
 \Phi(T_a)^{-1}m_z\Phi(T_a)=m_{\tau_a(z)}.
\]
It follows that there is a unique $c_a\in k^\times$ such that
\[
 \Phi(T_a)=m_{c_a}F_{\tau_a^{-1}}.
\]
In fact, $c_a=\Phi(T_a)(1)$, and the semilinear relation gives
$\Phi(T_a)(z)=\tau_a^{-1}(z)c_a$.
Define $\widehat T_a=m_{c_a}^{-1}T_a$. Then
\begin{equation}\label{dgn:eq:c91-normalized-implementer}
 \Phi(\widehat T_a)=F_{\tau_a^{-1}}.
\end{equation}
Any two operators realizing $\bar\beta_a$ differ by a scalar in
$k^\times$. By \eqref{dgn:eq:c91-fusion-scalars}, there is therefore
a unique implementing operator satisfying
\eqref{dgn:eq:c91-normalized-implementer}.

Since $\bar\beta_{ab_0}=\bar\beta_{b_0}\circ\bar\beta_a$, the product
$\widehat T_a\widehat T_{b_0}$ realizes $\bar\beta_{ab_0}$.
Furthermore, $\tau_{ab_0}=\tau_{b_0}\circ\tau_a$, so
\[
 \Phi(\widehat T_a\widehat T_{b_0})
  =F_{\tau_a^{-1}}F_{\tau_{b_0}^{-1}}
  =F_{\tau_{ab_0}^{-1}}.
\]
The uniqueness of the normalized implementer yields
\begin{equation}\label{dgn:eq:c91-right-homomorphism}
 \widehat T_a\widehat T_{b_0}=\widehat T_{ab_0}.
\end{equation}
Write
\[
 \widehat T_a=F_{\tau_a^{-1}}\bar\sigma(a),\qquad
 \bar\sigma(a)=F_{\tau_a}\widehat T_a\in\operatorname{GL}_n(k).
\]
Then \eqref{dgn:eq:c91-right-implementer} gives
\eqref{dgn:eq:c91-residual-inner}. Substituting this expression into
\eqref{dgn:eq:c91-right-homomorphism} and using
$X F_{\tau^{-1}}=F_{\tau^{-1}}\tau(X)$ gives
\eqref{dgn:eq:c91-residual-crossed}.
If $a$ acts trivially on $\bar S$, then $\tau_a=1$ and $T_a$ is
scalar. Hence $\widehat T_a=1$ and $\bar\sigma(a)=1$.
\end{proof}

\begin{remark}
The homomorphism $\Phi$ is applied to the implementing operators
$T_a$. The pure field operator $F_\tau$ need not normalize $\rho(P)$,
and the argument does not require $\Phi(F_\tau)$ to be defined.
The resulting realization depends on the fixed choice of $\Phi$;
different fusion homomorphisms need not yield the same realization.
\end{remark}

\subsection{A normalized integral lifting theorem}

We now lift the residual realization to $R$. The proof uses innerness
of matrix-algebra automorphisms over a local ring, followed by an
explicit determinant normalization. The innerness of $R$-linear
automorphisms is a special case of the results of Rosenberg and
Zelinsky on automorphisms of separable algebras \cite{RZ61}.
For the matrix-algebra and Morita-theoretic background, see
\cite{Lin18}. Related lifting methods for blocks and source
algebras were developed by K\"ulshammer, Okuyama and Watanabe
\cite{KOW00}. Here we give the lifting argument directly for $S$
and its prescribed residual realization.

\begin{theorem}\label{dgn:thm:c91-integral-lift}
Let $R$ be a commutative local ring, complete in the $J(R)$-adic
topology, with finite residue field $k$ of characteristic $p$.
Assume that $p\nmid n$. Suppose that a finite group $\Gamma$ acts
semilinearly on the right on $S=M_n(R)$, with action $\beta_a$.
Let $\varepsilon_a$ act entrywise through $\beta_a|_R$, relative to
this fixed matrix form. Fix a residual realization $\bar\sigma$
satisfying
\eqref{dgn:eq:c91-residual-inner}--\eqref{dgn:eq:c91-residual-crossed}.
Then there is a unique map $\sigma:\Gamma\to S^\times$ satisfying
\begin{align}
 \beta_a(X)&=\sigma(a)^{-1}\varepsilon_a(X)\sigma(a),
        \label{dgn:eq:c91-integral-inner}\\
 \varepsilon_{b_0}(\sigma(a))\sigma(b_0)&=\sigma(ab_0),
        \label{dgn:eq:c91-integral-crossed}\\
 \sigma(a)\bmod J(R)&=\bar\sigma(a),
        \label{dgn:eq:c91-integral-reduction}\\
 \det\sigma(a)&=t(\det\bar\sigma(a)),
        \label{dgn:eq:c91-integral-determinant}
\end{align}
where $t:k^\times\to R^\times$ is the Teichm\"uller section.
\end{theorem}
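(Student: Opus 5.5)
Fix $a\in\Gamma$ and consider the composite $\alpha_a=\beta_a\circ\varepsilon_a^{-1}$. It is an $R$-linear automorphism of $S=M_n(R)$, since $\beta_a$ and $\varepsilon_a$ restrict to the same map on $R$. Because $R$ is local, every $R$-algebra automorphism of $M_n(R)$ is inner (Rosenberg--Zelinsky \cite{RZ61}; for a local ring the Picard group is trivial). So there is $u_a\in S^\times$ with $\beta_a(X)=u_a^{-1}\varepsilon_a(X)u_a$, and $u_a$ is unique up to $R^\times$, because the centre of $S$ is $R$. Reducing modulo $J(R)$ and comparing with \eqref{dgn:eq:c91-residual-inner}, $\bar u_a$ and $\bar\sigma(a)$ implement the same semilinear map. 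Hence $\bar u_a=\bar z\,\bar\sigma(a)$ for some $\bar z\in k^\times$. Replacing $u_a$ by $t(\bar z)^{-1}u_a$ gives a lift satisfying \eqref{dgn:eq:c91-integral-inner} and \eqref{dgn:eq:c91-integral-reduction}. The remaining freedom is multiplication by elements of $1+J(R)$.

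\textbf{Determinant normalization.} Set $\delta=\det u_a\cdot t(\det\bar\sigma(a))^{-1}$. It lies in $1+J(R)$. Since $p\nmid n$ and $R$ is $J(R)$-adically complete, the $n$-th power map is a bijection of the pro-$p$ group $1+J(R)$; the binomial series for $(1+x)^{1/n}$ converges because $1/n\in\mathbb Z_p$. Let $\sigma(a)=\delta^{-1/n}u_a$. Then \eqref{dgn:eq:c91-integral-determinant} holds, and the reduction is unchanged.

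\textbf{Uniqueness.} Any other solution differs from $\sigma(a)$ by some $r\in R^\times$ with $r\equiv1$ modulo $J(R)$ and $r^n=1$. Since $1+J(R)$ has no nontrivial elements of order prime to $p$ (again because $n$-th powers are bijective there), $r=1$. Thus the first, third and fourth conditions determine $\sigma$, and uniqueness of the whole map $\sigma$ follows.

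\textbf{The crossed relation \eqref{dgn:eq:c91-integral-crossed}.} This is the main point. Let $w=\varepsilon_{b_0}(\sigma(a))\sigma(b_0)$. I first check that $w$ implements $\beta_{ab_0}=\beta_{b_0}\circ\beta_a$ relative to $\varepsilon_{ab_0}$:
\[
\beta_{b_0}\bigl(\beta_a(X)\bigr)
=\sigma(b_0)^{-1}\varepsilon_{b_0}\bigl(\sigma(a)^{-1}\varepsilon_a(X)\sigma(a)\bigr)\sigma(b_0)
=w^{-1}\,\varepsilon_{b_0}\varepsilon_a(X)\,w .
\]
This uses that $\varepsilon_{b_0}$ is a ring automorphism. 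The identity $\varepsilon_{b_0}\circ\varepsilon_a=\varepsilon_{ab_0}$ holds because $\varepsilon$ acts entrywise through the right action $\beta|_R$; this ordering convention needs a careful check.

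Next, $w$ reduces to $\tau_{b_0}(\bar\sigma(a))\bar\sigma(b_0)=\bar\sigma(ab_0)$ by \eqref{dgn:eq:c91-residual-crossed}. For the determinant, $\det w=\beta_{b_0}(\det\sigma(a))\det\sigma(b_0)$. The Teichm\"uller section commutes with ring automorphisms of $R$, because $t(x)$ is the unique root of unity of order prime to $p$ lifting $x$. Therefore
\[
\det w=t\bigl(\tau_{b_0}(\det\bar\sigma(a))\det\bar\sigma(b_0)\bigr)=t\bigl(\det\bar\sigma(ab_0)\bigr).
\]
So $w$ satisfies \eqref{dgn:eq:c91-integral-inner}, \eqref{dgn:eq:c91-integral-reduction} and \eqref{dgn:eq:c91-integral-determinant} for the element $ab_0$. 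By the uniqueness step, $w=\sigma(ab_0)$.

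I expect the main obstacle to be bookkeeping rather than substance. The key point is that uniqueness, obtained from the determinant normalization together with $p\nmid n$, forces the cocycle identity without any cohomological argument. The right-action composition conventions must be tracked consistently throughout.
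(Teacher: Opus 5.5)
Your proof is correct and uses the same ingredients as the paper: innerness of $\beta_a\circ\varepsilon_a^{-1}$ over the local ring $R$, rescaling so the lift reduces to $\bar\sigma(a)$, determinant normalization via the Teichm\"uller section and unique $n$th roots in $1+J(R)$, and the rigidity fact that $v\in1+J(R)$ with $v^n=1$ forces $v=1$. The difference is only organizational. The paper tracks the scalar defect $c(a,b_0)\in1+J(R)$ of the unnormalized lifts and cancels it by taking $n$th roots of the determinant relation, whereas you normalize each $\sigma(a)$ separately and obtain the crossed identity from per-element uniqueness, which spares you the equivariance of the $n$th-root map.
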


\begin{proof}
Let $q=|k|$. For each $z\in k^\times$, Hensel's lemma gives a unique
lift $t(z)\in R^\times$ satisfying $t(z)^{q-1}=1$.
By uniqueness, these lifts define a multiplicative,
$\Gamma$-equivariant section $t:k^\times\to R^\times$.
Since $p\nmid n$, the integer $n$ is a unit in $R$. A further
application of Hensel's lemma shows that
\[
 1+J(R)\longrightarrow1+J(R),\qquad v\longmapsto v^n
\]
is an automorphism. Its inverse, the unique $n$th-root map on
$1+J(R)$, is also $\Gamma$-equivariant.

For each $a\in\Gamma$, the automorphism
$\beta_a\circ\varepsilon_a^{-1}$ is $R$-linear and hence inner,
since $R$ is a commutative local ring. We may therefore choose
$t_a\in S^\times$ such that
\[
 \beta_a(X)=t_a^{-1}\varepsilon_a(X)t_a.
\]
The reductions $\bar t_a$ and $\bar\sigma(a)$ realize the same
semilinear automorphism and thus differ by a scalar in $k^\times$.
Multiplying $t_a$ by a scalar unit in $R$, we may assume that
$\bar t_a=\bar\sigma(a)$.
The right-action identities for $\beta$ and $\varepsilon$ imply
that there are scalars $c(a,b_0)\in R^\times$ with
\[
 \varepsilon_{b_0}(t_a)t_{b_0}=c(a,b_0)t_{ab_0}.
\]
By the residual crossed identity, $c(a,b_0)\in1+J(R)$.

Set $d(a)=\det t_a$ and define
\[
 d_1(a)=d(a)\,t(\det\bar\sigma(a))^{-1}\in1+J(R).
\]
Taking determinants in the preceding identity, and using the residual
crossed identity together with the multiplicativity and equivariance
of $t$, gives
\[
 \varepsilon_{b_0}(d_1(a))d_1(b_0)
       =c(a,b_0)^n d_1(ab_0).
\]
Let $u(a)$ be the unique $n$th root of $d_1(a)$ in $1+J(R)$.
Taking the unique $n$th roots within this group yields
\[
 \varepsilon_{b_0}(u(a))u(b_0)=c(a,b_0)u(ab_0).
\]
It follows that $\sigma(a)=u(a)^{-1}t_a$ satisfies the crossed
identity \eqref{dgn:eq:c91-integral-crossed}.
Since $u(a)$ is scalar and congruent to $1$ modulo $J(R)$, this
adjustment preserves the inner-action and reduction identities.
Finally, $u(a)^n=d_1(a)$ gives
$\det\sigma(a)=t(\det\bar\sigma(a))$.

For uniqueness, suppose that $\sigma'$ satisfies the same four
identities. Since $\sigma'(a)$ and $\sigma(a)$ realize the same
automorphism, $\sigma'(a)=v(a)\sigma(a)$ for some $v(a)\in R^\times$.
Equality of reductions gives $v(a)\in1+J(R)$, while equality of
determinants gives $v(a)^n=1$. The uniqueness of $n$th roots in
$1+J(R)$ implies $v(a)=1$.
\end{proof}

\begin{remark}
The uniqueness in Theorem~\ref{dgn:thm:c91-integral-lift} is relative
to the fixed matrix form, the action $\beta$, the residual realization
$\bar\sigma$ and the prescribed determinant normalization.
It does not assert uniqueness of the fusion homomorphism or identify
lifts of different residual realizations. Nor does it establish an
identification with a separately constructed character correspondence.
\end{remark}

\subsection{Normalization on centralizers}

We apply the two preceding constructions to the mixed stabilizer and
record the normalization on $L$ and $C_A(M)$.

\begin{proposition}\label{dgn:prop:c91-full-integral-mixed}
In the central-defect configuration, the fixed matrix form
$S\cong M_n(R)$ admits a realization $\sigma$ of the full group
$\Gamma$ satisfying
\eqref{dgn:eq:c91-integral-inner}--\eqref{dgn:eq:c91-integral-determinant}.
For pure group elements, write $\sigma(h)=\sigma((h,1))$ whenever
$(h,1)\in\Gamma$. Then
\[
 \sigma(l)=1\quad(l\in L),\qquad
 \sigma(c)=1\quad(c\in C_A(M)).
\]
\end{proposition}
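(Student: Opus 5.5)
The construction of $\sigma$ is a direct combination of the two preceding results. By Lemma~\ref{dgn:lem:c91-natural-residual}, the natural action $\beta$ of $\Gamma=(H\times\mathcal G)_\theta$ stabilizes $S$ and induces a semilinear action on $\bar S=S/J(R)S$. The identification \eqref{dgn:eq:c91-residual-centralizer} shows that $\bar S$ is the Dade $P$-algebra of the quotient configuration with $\bar S(P)=k$.

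To apply Lemma~\ref{dgn:lem:c91-full-residual}, we must check that $\Gamma$ normalizes the image of $P$ in $\Aut_k(\bar S)$. I plan to read this off from the fact that $D\leq H$ is normalized by every $h$ with $(h,\gamma)\in\Gamma$, and that the Galois part commutes with conjugation by group elements. Concretely, the $P$-action on $\bar S$ is induced by conjugation by elements of $D$, and $\beta_a\beta_u\beta_a^{-1}=\beta_{u'}$ with $u'=h^{-1}uh\in D$. This yields $\bar\sigma$.

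Since $p\nmid n$ by Lemma~\ref{dgn:lem:multiplicity-algebra} and $R$ is complete local by Lemma~\ref{dgn:lem:matrix-base}, Theorem~\ref{dgn:thm:c91-integral-lift} then gives a unique $\sigma$ satisfying \eqref{dgn:eq:c91-integral-inner}--\eqref{dgn:eq:c91-integral-determinant}. If $\mathcal G$ does not act through a finite group on $R$, I will replace it by its finite image, as the preliminaries allow.

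For the normalization, let $l\in L$. Then $(l,1)\in\Gamma$: indeed $l\in N$ fixes $\theta$, and $l\in N_N(D)\leq H$. Conjugation by $l$ on $S$ is inner via $lb'$, which lies in $\OO Lb'$ and therefore centralizes $S$ by definition. Hence $\beta_{(l,1)}$ is trivial on $S$. Also $l$ acts on $R=\OO Z$ by conjugation, and $Z\leq Z(M)$, so $\varepsilon_{(l,1)}$ is the identity.

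Similarly, for $c\in C_A(M)$ we have $c\in H$ because $D\leq M$, and $c$ fixes $\theta$ since $N\leq M$. Conjugation by $c$ fixes $\OO N$ pointwise, so both $\beta_{(c,1)}$ and $\varepsilon_{(c,1)}$ are trivial.

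In both cases the action on $\bar S$ is trivial, so the final clause of Lemma~\ref{dgn:lem:c91-full-residual} gives $\bar\sigma(l)=\bar\sigma(c)=1$. The constant assignment $1$ on these elements then satisfies the local versions of the four identities. The crossed identity is the delicate point, since $\varepsilon_{b_0}(\sigma(a))$ involves other elements. I will handle it through the uniqueness argument of Theorem~\ref{dgn:thm:c91-integral-lift}.

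Since $\beta_a$ is trivial, $\sigma(a)$ commutes with $S$ and is therefore a scalar $v\in R^\times$. Reduction forces $v\in1+J(R)$, and the determinant condition gives $v^n=\det\sigma(a)=t(1)=1$. Uniqueness of $n$th roots in $1+J(R)$ then gives $v=1$.

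The main obstacle is the normalizer verification for the $P$-action, together with the bookkeeping that the residual $P$-action in Lemma~\ref{dgn:lem:c91-full-residual} is the one coming from $D/Z$ acting on the quotient multiplicity algebra. I would first make this explicit via the isomorphism \eqref{dgn:eq:c91-residual-centralizer}, which is $\Gamma$-equivariant.
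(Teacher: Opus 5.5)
Your proposal is correct and follows the paper's proof essentially step for step. You obtain $\bar\sigma$ from Lemma~\ref{dgn:lem:c91-full-residual} after checking that $\Gamma$ normalizes the $P$-action, and lift it with Theorem~\ref{dgn:thm:c91-integral-lift}. You then show that $\sigma(l)$ and $\sigma(c)$ are scalars $v\in 1+J(R)$ with $v^n=1$, hence $v=1$. The aside about checking the crossed identity for a constant assignment is unnecessary: you are only evaluating the already-constructed $\sigma$ at these elements, exactly as your final paragraph does.
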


\begin{proof}
Lemma~\ref{dgn:lem:c91-natural-residual} provides the semilinear
$\Gamma$-action and the residual Dade algebra. The group part
normalizes $D/Z$, and field automorphisms commute with group
conjugation. Hence $\Gamma$ normalizes the image of the $P$-action,
and Lemma~\ref{dgn:lem:c91-full-residual} gives a residual realization
of the full group $\Gamma$ in the fixed matrix form.
Since $R=\OO Z$ is complete, commutative and local, with finite
residue field $k$, and $p\nmid n$, the integral realization follows
from Theorem~\ref{dgn:thm:c91-integral-lift}.

Let $l\in L$. Then $(l,1)\in\Gamma$, since inner automorphisms
from $N$ fix $\theta$. By the definition of $S$, conjugation by $l$
acts trivially on $S$. It also acts trivially on $R$, since
$Z\leq Z(N)$. Lemma~\ref{dgn:lem:c91-full-residual} therefore gives
$\bar\sigma(l)=1$. The inner-action identity implies that
$\sigma(l)=v_l1_S$ for some $v_l\in R^\times$. The reduction and
determinant identities then give
\[
 v_l\in1+J(R),\qquad v_l^n=1.
\]
The uniqueness of $n$th roots in $1+J(R)$ yields $v_l=1$, and hence
$\sigma(l)=1$.

If $c\in C_A(M)$, then $c$ centralizes both $N$ and $D$. Thus
$c\in H$ and $(c,1)\in\Gamma$, and $c$ acts trivially on both
$S\subseteq\OO N$ and $R$. The same argument gives $\sigma(c)=1$.
\end{proof}

\begin{remark}
The residual normalization above is obtained on
$S/J(R)S\cong M_n(k)$. By contrast, reduction modulo $J(\OO)$ gives
$S/J(\OO)S\cong M_n(kZ)$, which still contains the nilpotent
directions of $kZ$. Brauer normalization on this latter algebra and
block compatibility for all intermediate groups require further
arguments; they do not follow from the proposition alone.
\end{remark}

\section{Graded corner correspondences}\label{sec:corner}

We use the integral realization constructed in Section~\ref{sec:lift}
to obtain a graded corner isomorphism. We then specialize this
isomorphism to a central-character fiber and construct compatible
character correspondences for intermediate groups. The resulting
correspondences preserve relative degrees and, on the groups occurring
in Theorem~A, character heights.

\subsection{The corner algebra and the crossed action}
\label{dgn:subsec:c91-corner}

In the setting of Theorem~A, we take $G=A_\theta$ and
$H_0=H_\theta$. Lemma~\ref{dgn:lem:central91-frattini} gives
$G=NH_0$. For the algebraic construction in Subsections~\ref{dgn:subsec:c91-corner}--\ref{dgn:subsec:c91-specialization},
the letter $A$ denotes the base corner algebra. We use the convention
$u^h=h^{-1}uh$ and assume that
\[
 \begin{gathered}
 G=NH_0,\qquad N\trianglelefteq G,\qquad L=N\cap H_0,\\
 Z\leq L\cap Z(N),\qquad Z\trianglelefteq G,
 \qquad R=\OO Z.
 \end{gathered}
\]
Here $\OO$ is the complete discrete valuation ring of our splitting
$p$-modular system, and $Z$ is a finite abelian $p$-group. Let
$b\in Z(\OO N)$ be a $G$-invariant block idempotent and let
$b'\in Z(\OO L)$ be an $H_0$-invariant block idempotent. Set
\[
 B=\OO G b,\qquad B_0=\OO N b,\qquad i=bb',
 \qquad A=iB_0i,\qquad T=\OO Lb'.
\]
Since $b$ is central in $\OO N$, the element $i$ is an idempotent.
It is fixed by $H_0$, although it need not be fixed by $G$.
The images of $R$ in $B$ and $B_0$ are $Rb$, whereas its images
in $A$ and $T$ are $Ri$ and $Rb'$, respectively. We suppress these
identity idempotents when no ambiguity arises.

Assume, as holds in the central-defect configuration, that
$B_0\cong M_d(R)$, $T\cong M_e(R)$ and $iB_0i\ne0$.
The local block algebra acts on the corner through
\begin{equation}\label{dgn:eq:c91-base-embedding}
 \iota:T\longrightarrow A,\qquad t\longmapsto bt.
\end{equation}
Its centralizer is the multiplicity algebra
\[
 S=C_A(\iota(T))\cong M_n(R),\qquad n>0.
\]
All these matrix algebra isomorphisms preserve the embedded copies of
$R$. Fix $R$-matrix units $E_{rs}$ in $S$ and set
\[
 S_0=\sum_{r,s}\OO E_{rs}\cong M_n(\OO),\qquad j=E_{11}.
\]
For $h\in H_0$, let $\varepsilon_h$ act on matrix entries through
conjugation by $h$ on $R$, fixing the chosen matrix units. Suppose
that elements $\sigma_h=\sigma(h)\in S^\times$ satisfy
\begin{align}
 h^{-1}sh&=\sigma_h^{-1}\varepsilon_h(s)\sigma_h,
                  \label{dgn:eq:c91-action}\\
 \varepsilon_k(\sigma_h)\sigma_k&=\sigma_{hk},
                  \label{dgn:eq:c91-crossed}\\
 \sigma_l&=i\qquad(l\in L),
                  \label{dgn:eq:c91-trivial-L}
\end{align}
for all $h,k\in H_0$ and $s\in S$. In particular,
\eqref{dgn:eq:c91-crossed} with $h=k=1$ gives $\sigma_1=i$.
In the configuration of Theorem~A, these elements are supplied by
Section~\ref{sec:lift}.

\begin{lemma}\label{dgn:lem:c91-base-centralizer}
The homomorphism \eqref{dgn:eq:c91-base-embedding} is injective, and
\begin{equation}\label{dgn:eq:c91-base-centralizer}
 C_A(S_0)=C_A(S)=\iota(T).
\end{equation}
\end{lemma}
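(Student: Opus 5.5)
The plan is to work entirely inside the matrix structure $A=iB_0i$ over $R$, using the tensor decomposition already used in Lemma~\ref{dgn:lem:multiplicity-algebra}. We write $B_0=\End_R(V)$ with $V$ free of rank $d$. Then $iV=\iota(1)V$ is a direct summand, so $A=\End_R(iV)$. The matrix units of $T\cong M_e(R)$ give a decomposition $iV\cong R^e\otimes_R W_0$ in which $\iota(T)$ acts as $M_e(R)\otimes 1$. The first step is injectivity of $\iota$. Since $iB_0i\neq0$, the module $iV$ is nonzero, hence $W_0\neq0$. The action of $M_e(R)$ on $R^e\otimes_R W_0$ with $W_0$ free of positive rank is faithful: its kernel is a two-sided ideal of $M_e(R)$, so it has the form $M_e(I)$ for an ideal $I$ of $R$, and $I$ annihilates the free module $W_0\neq0$, so $I=0$.

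Next we identify $S$ and $S_0$ in these coordinates. The centralizer of $M_e(R)\otimes1$ in $\End_R(R^e\otimes W_0)=M_e(R)\otimes_R\End_R(W_0)$ is $1\otimes\End_R(W_0)$. This is the standard fact that $C_{M_e(\Lambda)}(M_e(R))=\Lambda$, checked directly by commuting with the matrix units $e_{rs}\otimes1$. Thus $S=1\otimes\End_R(W_0)\cong M_n(R)$. The chosen matrix units $E_{rs}$ of $S$ are $R$-matrix units of $\End_R(W_0)$. Since $S_0$ contains them, and $S=RS_0$ because $R$ is central in $A$, we get $C_A(S_0)=C_A(S)$: an element commuting with all $E_{rs}$ commutes with their $R$-span, and $R\cdot1$ is central in $A$. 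This gives the first equality.

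For the second equality, we compute $C_A(S)$ by the same argument with the roles swapped. Using the $E_{rs}$, we decompose $W_0\cong R^n$, so $iV\cong R^e\otimes R^n$ and $A\cong M_e(R)\otimes_R M_n(R)$. An element of $A$ commuting with $1\otimes E_{rs}$ for all $r,s$ lies in $M_e(R)\otimes 1$, again by the matrix-unit computation: writing $x=\sum_{r,s} x_{rs}\otimes E_{rs}$, commutation forces $x_{rs}=0$ for $r\neq s$ and $x_{rr}=x_{11}$. Therefore $C_A(S)=M_e(R)\otimes1=\iota(T)$. The inclusion $\iota(T)\subseteq C_A(S)$ holds by definition of $S$ in any case.

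The only delicate point is keeping track of identities and the embedded copies of $R$. We must ensure that $\iota(1)=bb'=i$ is the identity of $A$, and that $R$ acts on $iV$ compatibly through $Rb$, $Rb'$ and $Ri$. Both hold because $Z\le Z(N)\cap L$, so $z\in Z$ acts on $iV$ the same way whether viewed in $B_0$ or through $\iota$. Consequently the tensor decomposition is one of $R$-modules, and the double-centralizer computation over the local ring $R$ is legitimate. Freeness of $W_0$, which comes from projectivity over the local ring $R$, is what makes the matrix-unit arguments valid.
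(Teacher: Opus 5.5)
Your proposal is correct and takes essentially the same route as the paper. In both, the matrix units of $T$ give $iV\cong R^e\otimes_R W_0$ with $W_0$ free and nonzero, which yields faithfulness (hence injectivity of $\iota$), $S=1\otimes\End_R(W_0)$ and $C_A(S)=\End_R(R^e)\otimes 1=\iota(T)$; then $S=RS_0$ with $R$ central in $A$ gives $C_A(S_0)=C_A(S)$. You simply spell out the matrix-unit verifications (the $M_e(I)$ kernel argument and the coefficient comparison) that the paper leaves implicit.
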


\begin{proof}
Identify $B_0$ with $\End_R(V)$, where $V$ is free of rank $d$.
The module $iV$ is a nonzero direct summand of $V$. Since $R$ is
local, $iV$ is free, and $A=\End_R(iV)$.
Choose matrix units $F_{uv}$ for $T\cong M_e(R)$, and use the same
notation for their images under $\iota$. Their diagonal sum acts as
the identity on $iV$. With $W=F_{11}(iV)$, the matrix-unit identities
give
\[
 iV\cong R^e\otimes_R W,\qquad
 \iota(T)=\End_R(R^e)\otimes1_W.
\]
The module $W$ is nonzero and finitely generated projective over $R$;
it is therefore free, since $R$ is local. Hence $T$ acts faithfully
on $iV$, proving injectivity. The same decomposition yields
\[
 S=1\otimes\End_R(W),\qquad
 C_A(S)=\End_R(R^e)\otimes1_W=\iota(T).
\]
Finally, $S=RS_0$ and $R$ is central in $A$, so an element of $A$
centralizes $S_0$ if and only if it centralizes $S$.
\end{proof}

\subsection{The graded corner isomorphism}

A full idempotent yields a Morita equivalence through the standard
corner construction; see \cite{Lin18}. We give an explicit graded
algebra isomorphism, following the magic-representation approach of
\cite{Lad11}.

\begin{proposition}\label{dgn:prop:c91-graded-corner}
Under the preceding assumptions, define
\begin{equation}\label{dgn:eq:c91-kappa}
 \kappa(h)=hi\,\sigma_h^{-1}\in(iBi)^\times
                       \qquad(h\in H_0),
\end{equation}
where $hi$ is viewed as a unit of the corner algebra with identity
$i$. Then $\kappa$ is a group homomorphism and induces
$\OO$-algebra isomorphisms
\begin{equation}\label{dgn:eq:c91-corner-isomorphism}
 \OO H_0b'
   \xrightarrow{\ \kappa\ }C_{iBi}(S_0)
   \xrightarrow{\ c\mapsto jcj\ }jBj.
\end{equation}
These isomorphisms preserve the embedded copies of $R$ and the
grading by $G/N\cong H_0/L$. Moreover, $i$ and $j$ are full
idempotents in $B$:
\begin{equation}\label{dgn:eq:c91-fullness}
 BiB=B,\qquad BjB=B.
\end{equation}
Consequently, \eqref{dgn:eq:c91-corner-isomorphism} yields an integral
Morita equivalence between $B$ and $\OO H_0b'$. This construction
does not require $i$ or $j$ to be $G$-invariant.
\end{proposition}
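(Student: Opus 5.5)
I will follow Ladisch's magic-representation argument, working throughout in the corner algebra $iBi$. The plan has four steps: show $\kappa$ is a homomorphism, show its image is exactly the centralizer of $S_0$, pass to the $j$-corner, and prove fullness.

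\emph{Homomorphism property.} For $h,k\in H_0$ I compute
\[
 \kappa(h)\kappa(k)=hi\sigma_h^{-1}ki\sigma_k^{-1}
   =hk\,i\,(k^{-1}\sigma_h^{-1}k)\sigma_k^{-1},
\]
using that $k$ fixes $i$. By \eqref{dgn:eq:c91-action} applied to $s=\sigma_h^{-1}\in S$, we have $k^{-1}\sigma_h^{-1}k=\sigma_k^{-1}\varepsilon_k(\sigma_h)^{-1}\sigma_k$. The crossed identity \eqref{dgn:eq:c91-crossed} then gives
\[
 \kappa(h)\kappa(k)=hk\,i\,\sigma_{hk}^{-1}=\kappa(hk).
\]

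\emph{Image lies in the centralizer.} By \eqref{dgn:eq:c91-action}, conjugation by $\kappa(h)$ on $S$ is $s\mapsto\varepsilon_h^{-1}(s)$. This fixes the matrix units $E_{rs}$, so $\kappa(h)$ centralizes $S_0$. Moreover \eqref{dgn:eq:c91-trivial-L} gives $\kappa(l)=li=\iota(lb')$ for $l\in L$. Hence $\kappa$ extends $\OO$-linearly to $\OO H_0\to C_{iBi}(S_0)$. On $L$ it factors through $\iota$, so $\kappa(b')=i$ and we obtain a unital map $\OO H_0b'\to C_{iBi}(S_0)$.

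\emph{Bijectivity and grading.} I decompose
\[
 iBi=\bigoplus_{x\in H_0/L} i\,\OO(Nx)\,i=\bigoplus_x iB_0i\cdot xi,
\]
using $G=NH_0$ and choosing representatives in $H_0$. Each homogeneous component equals $A\,\kappa(x)=\kappa(x)A$, since $\sigma_x\in A^\times$. Because $\kappa(x)$ is a unit centralizing $S_0$, the centralizer of $S_0$ in the $x$-component is $C_A(S_0)\kappa(x)=\iota(T)\kappa(x)$ by Lemma~\ref{dgn:lem:c91-base-centralizer}. This matches the component $\OO Lb'\cdot x$ of $\OO H_0b'$, so $\kappa$ is a graded isomorphism onto $C_{iBi}(S_0)$. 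Both $R$-embeddings agree because $Z\le L$ and $\kappa|_{\OO Lb'}=\iota$.

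\emph{Passing to the $j$-corner.} $S_0\cong M_n(\OO)$ is a unital subalgebra of $iBi$ with unit $i$, so
\[
 iBi\cong S_0\otimes_\OO C_{iBi}(S_0).
\]
Compressing by $j=E_{11}$ then gives an isomorphism $c\mapsto jcj$ from $C_{iBi}(S_0)$ onto $jBj$, and it preserves the grading and $R$.

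\emph{Fullness.} First, $j$ generates $i$ in $iBi$, since $i=\sum_r E_{r1}jE_{1r}$; so it suffices to show $BiB=B$. Moreover $B_0iB_0=B_0$, because $iV\ne0$ and $B_0\cong\End_R(V)$ is a matrix algebra over the local ring $R$, so any nonzero idempotent is full. Since $B=\sum_x B_0x$ and $B_0\subseteq BiB$, we get $BiB=B$. Fullness yields the Morita equivalence between $B$ and $iBi\cong M_n(jBj)$, and hence between $B$ and $\OO H_0b'$.

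The main obstacle I expect is the identification of the homogeneous components. It requires checking that $i\,\OO(Nx)\,i=A\,xi$ when $x\in H_0$ does not fix $b$ pointwise in the relevant sense. This holds because $x$ normalizes $N$ and fixes both $b$ and $b'$, which gives $i\,n x\,i=inix$. No $G$-invariance of $i$ is needed, since the representatives are taken in $H_0$.
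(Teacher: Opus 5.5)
Your proposal is correct and follows essentially the same route as the paper: the same homomorphism computation via \eqref{dgn:eq:c91-action} and the crossed identity, the same componentwise identification $C_{iBi}(S_0)=\bigoplus_x\iota(T)\kappa(x)$ using $H_0$-invariance of $i$ and Lemma~\ref{dgn:lem:c91-base-centralizer}, and the same fullness argument via $b\in B_0iB_0$ and $i=\sum_rE_{r1}jE_{1r}$. The only cosmetic difference is that you get the compression isomorphism from the standard decomposition $iBi\cong S_0\otimes_\OO C_{iBi}(S_0)$, whereas the paper writes down the explicit inverse $t\mapsto\sum_rE_{r1}tE_{1r}$. You also use implicitly that an element centralizes $S_0$ exactly when each homogeneous component does, because $S_0$ lies in the identity component; the paper states this explicitly.
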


\begin{proof}
All inverses below are taken in the corner algebra with identity $i$.
Since $i$ is $H_0$-invariant, equations
\eqref{dgn:eq:c91-action} and \eqref{dgn:eq:c91-crossed} give
\begin{align*}
 \kappa(h)\kappa(k)
 &=hi\,\sigma_h^{-1}ki\,\sigma_k^{-1}\\
 &=hki\,(\sigma_h^{-1})^k\sigma_k^{-1}\\
 &=hki\,\sigma_k^{-1}\varepsilon_k(\sigma_h^{-1})\\
 &=hki\,(\varepsilon_k(\sigma_h)\sigma_k)^{-1}
   =\kappa(hk).
\end{align*}
Thus $\kappa$ is a group homomorphism. For $s\in S$, we also have
\begin{equation}\label{dgn:eq:c91-kappa-centralizes}
 \kappa(h)^{-1}s\kappa(h)
   =\sigma_hh^{-1}sh\sigma_h^{-1}=\varepsilon_h(s).
\end{equation}
Since $\varepsilon_h$ fixes $S_0$ pointwise, $\kappa(h)$ centralizes
$S_0$. Equation~\eqref{dgn:eq:c91-trivial-L} further gives
$\kappa(l)=li$ for $l\in L$.

The universal property of the group algebra extends $\kappa$ uniquely
to a unital $\OO$-algebra homomorphism
$\OO H_0\longrightarrow C_{iBi}(S_0)$. Its restriction to $\OO L$
is $u\mapsto bub'$. In particular, it sends $b'$ to $i$ and
$1-b'$ to zero, and hence factors through the first map in
\eqref{dgn:eq:c91-corner-isomorphism}. On $T$, the induced map is
precisely \eqref{dgn:eq:c91-base-embedding}.

To prove that this map is an isomorphism, we compare homogeneous
components. Choose a transversal $\mathcal T$ for $H_0/L$.
Since $G=NH_0$, it is also a transversal for $G/N$, and
\begin{equation}\label{dgn:eq:c91-corner-grading}
 B=\bigoplus_{h\in\mathcal T}B_0h,\qquad
 iBi=\bigoplus_{h\in\mathcal T}iB_0hi
     =\bigoplus_{h\in\mathcal T}Ah.
\end{equation}
The last equality follows from the $H_0$-invariance of $i$.
As $h$ normalizes $A$ and $\sigma_h\in A^\times$, we have
\[
 Ah=Ahi\,\sigma_h^{-1}=A\kappa(h).
\]
The algebra $S_0$ lies in the identity component. Therefore an
element centralizes $S_0$ exactly when each of its homogeneous
components does. By \eqref{dgn:eq:c91-kappa-centralizes} and
Lemma~\ref{dgn:lem:c91-base-centralizer},
\begin{equation}\label{dgn:eq:c91-centralizer-grading}
 C_{iBi}(S_0)
       =\bigoplus_{h\in\mathcal T}\iota(T)\kappa(h).
\end{equation}
On the other hand, $\OO H_0b'=\bigoplus_{h\in\mathcal T}Th$.
On each pair of corresponding components, the induced map is
$th\mapsto\iota(t)\kappa(h)$. This is a bijection because $\iota$
is injective and $\kappa(h)$ is invertible. Thus the first map in
\eqref{dgn:eq:c91-corner-isomorphism} is a graded algebra isomorphism.

Write $C=C_{iBi}(S_0)$. Since every element of $C$ commutes with
$j\in S_0$, compression defines a unital algebra homomorphism
\[
 f:C\longrightarrow jBj,\qquad c\longmapsto jcj.
\]
Its inverse is
\begin{equation}\label{dgn:eq:c91-corner-inverse}
 g:jBj\longrightarrow C,\qquad
 t\longmapsto\sum_{r=1}^nE_{r1}tE_{1r}.
\end{equation}
Indeed, for all $u,v$,
$E_{uv}g(t)=E_{u1}tE_{1v}=g(t)E_{uv}$, so $g(t)\in C$.
The matrix-unit identities give
$g(t)g(t')=g(tt')$, $f(g(t))=t$ and $g(f(c))=c$.
All the matrix units lie in the identity component, so both maps
preserve the grading. They also preserve the embedded copies of $R$:
the first isomorphism sends $rb'$ to $ri$, and compression sends
$ri$ to $rj$.

It remains to prove fullness. In $B_0=\End_R(V)$, both summands of
$V=iV\oplus(1-i)V$ are free, and $iV\ne0$. A basis adapted to this
decomposition identifies $i$ with a diagonal idempotent having at
least one diagonal entry equal to $1$. Matrix units then give
$b\in B_0iB_0$, and hence $BiB=B$. Moreover,
\[
 i=\sum_{r=1}^nE_{r1}jE_{1r}\in BjB.
\]
Together with $b\in B_0iB_0$, this implies $b\in BjB$ and thus
$BjB=B$. The asserted Morita equivalence follows from the fullness
of $j$ and the isomorphism $\OO H_0b'\cong jBj$.
\end{proof}

\begin{remark}\label{dgn:rem:c91-noncentral-R}
The ring $R$ is central in $B_0$, $A$ and $S$, but need not be
central in $B$, since $H_0$ may act nontrivially on $Z$.
Accordingly, \eqref{dgn:eq:c91-corner-isomorphism} consists of
$\OO$-algebra isomorphisms preserving the specified copies of $R$;
the full algebras need not be central $R$-algebras.
\end{remark}

\subsection{Specialization to central-character fibers}\label{dgn:subsec:c91-specialization}

\begin{corollary}\label{dgn:cor:c91-specialization}
Suppose that $\lambda\in\Irr(Z)$ is $G$-invariant and has values
in $\OO$. Since $Z$ is abelian, $\lambda$ is a linear character.
We also denote by $\lambda$ its $\OO$-linear extension, which is
an $\OO$-algebra homomorphism
\[
 \lambda:R=\OO Z\longrightarrow\OO,\qquad
 \sum_{z\in Z}a_z z\longmapsto\sum_{z\in Z}a_z\lambda(z).
\]
Set $I_\lambda=\ker\lambda$ and $B_\lambda=B/I_\lambda B$.
Then $I_\lambda B=BI_\lambda$ is a two-sided ideal, and
\eqref{dgn:eq:c91-corner-isomorphism} induces an isomorphism
\begin{equation}\label{dgn:eq:c91-specialization}
 (\OO H_0b')/I_\lambda(\OO H_0b')
       \xrightarrow{\sim}jB_\lambda j.
\end{equation}
This isomorphism preserves the grading, and the image of $j$ is
full in $B_\lambda$. For every $N\leq U\leq G$, the construction
restricts to an isomorphism
\[
 \OO(U\cap H_0)b'\cong j\OO U b j,
\]
and this restriction is compatible with specialization.
\end{corollary}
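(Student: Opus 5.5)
The plan is to derive everything from the graded isomorphism \eqref{dgn:eq:c91-corner-isomorphism} of Proposition~\ref{dgn:prop:c91-graded-corner} by base change along $\lambda:R\to\OO$. The two points that need care are that $I_\lambda$ generates a two-sided ideal of $B$ even though $R$ need not be central in $B$ (Remark~\ref{dgn:rem:c91-noncentral-R}), and that the grading and fullness survive the quotient.

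\textbf{Step 1: the ideal.} I first show that $I_\lambda B=BI_\lambda$. For $g\in G$, conjugation by $g$ stabilizes $Z$ and $R$. Since $\lambda$ is $G$-invariant, we have $\lambda(g^{-1}zg)=\lambda(z)$, so $g^{-1}I_\lambda g=I_\lambda$. In $B=\bigoplus_{h\in\mathcal T}B_0h$, the ring $R$ is central in $B_0$. Hence
\[
 I_\lambda B_0h=B_0I_\lambda h=B_0h(h^{-1}I_\lambda h)=B_0hI_\lambda .
\]
Summing over $h$ gives $I_\lambda B=BI_\lambda$, which is therefore a two-sided ideal. The same computation in $\OO H_0b'=\bigoplus Th$ shows that $I_\lambda(\OO H_0b')$ is two-sided, because $H_0\le G$ also fixes $\lambda$. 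Since $R$ is central in each homogeneous component, both ideals are graded: $I_\lambda B=\bigoplus I_\lambda B_0h$. Consequently both quotients inherit the $G/N$-grading.

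\textbf{Step 2: transport through the isomorphisms.} The composite map $\Psi:\OO H_0b'\to jBj$ is $th\mapsto j\iota(t)\kappa(h)j$. It preserves the grading and sends $rb'$ to $rj$ for $r\in R$. It therefore maps $I_\lambda(\OO H_0b')=\bigoplus I_\lambda Th$ onto $\bigoplus I_\lambda j\iota(T)\kappa(h)j$.

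It remains to identify this image with $j(I_\lambda B)j$. The element $j$ commutes with $R$, since $j\in S$ and $R$ is central in $A$. Also $jBj=\Psi(\OO H_0b')$. Together these give $j(I_\lambda B)j=I_\lambda\, jBj$, which equals $\Psi(I_\lambda\OO H_0b')$.

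Since $j$ is an idempotent, $jB_\lambda j=jBj/j(I_\lambda B)j$. Hence $\Psi$ induces the isomorphism \eqref{dgn:eq:c91-specialization}, and this isomorphism is graded. Fullness passes to the quotient: reducing $BjB=B$ from \eqref{dgn:eq:c91-fullness} modulo $I_\lambda B$ gives $B_\lambda\bar jB_\lambda=B_\lambda$.

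\textbf{Step 3: intermediate groups.} Let $N\le U\le G$ and set $U_0=U\cap H_0$. Because $G=NH_0$, we get $U=NU_0$, and $U_0/L\cong U/N$. A transversal $\mathcal T_U$ for $U_0/L$ can be chosen inside $\mathcal T$. Then $\OO U_0b'=\bigoplus_{h\in\mathcal T_U}Th$ and $\OO Ub=\bigoplus_{h\in\mathcal T_U}B_0h$. Since $\Psi$ is graded, it maps the sub-sum over $\mathcal T_U$ onto $\bigoplus_{h\in\mathcal T_U}j\iota(T)\kappa(h)j$, which is $j\OO Ubj$ by \eqref{dgn:eq:c91-centralizer-grading} restricted to these components. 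Equivalently, one can note that $\kappa|_{U_0}$ satisfies the same hypotheses with $U$ in place of $G$ and apply Proposition~\ref{dgn:prop:c91-graded-corner} directly.

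Compatibility with specialization then follows because the ideals in Step 1 are graded. Thus $I_\lambda\OO Ub=(I_\lambda B)\cap\OO Ub$, and similarly on the local side. The induced isomorphisms for $U$ are therefore the restrictions of \eqref{dgn:eq:c91-specialization}.

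\textbf{Main obstacle.} The main obstacle is Step 1 together with the identification in Step 2. The non-centrality of $R$ in $B$ means that the $G$-invariance of $\lambda$ is exactly what is needed. One must check carefully that $\Psi$ carries $I_\lambda$-multiples to $I_\lambda$-multiples, rather than to multiples of a twisted ideal $h^{-1}I_\lambda h$. Everything else is bookkeeping with graded components.
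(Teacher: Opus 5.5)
Your proposal is correct and follows the paper's argument. As in the paper, $G$-invariance of $\lambda$ gives $I_\lambda B=BI_\lambda$, and the $R$-preserving composite isomorphism carries the source ideal onto $I_\lambda jBj=j(I_\lambda B)j$ because $j$ commutes with $R$. Fullness and the grading then pass to the quotient, and the intermediate groups are handled by the graded sub-sums indexed by $U/N$. One small correction: $I_\lambda B$ is graded because $I_\lambda$ lies in the identity component, not because $R$ is central in each homogeneous component, and the argument is unaffected.
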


\begin{proof}
Since $\lambda$ is $G$-invariant, the ideal $I_\lambda$ is
$G$-stable. The relation $gr=r^{g^{-1}}g$, for $g\in G$ and
$r\in R$, gives $BI_\lambda=I_\lambda B$. Thus $I_\lambda B$
is a two-sided ideal. The same argument applies to the source algebra.

The composite isomorphism of
Proposition~\ref{dgn:prop:c91-graded-corner} preserves the specified
copies of $R$, so it maps the source ideal onto $I_\lambda jBj$.
Since $j$ commutes with $R$,
\[
 I_\lambda jBj=j(I_\lambda B)j,\qquad
 (jBj)/j(I_\lambda B)j\cong jB_\lambda j.
\]
This proves \eqref{dgn:eq:c91-specialization}. The fullness of the
image of $j$ follows by passing to the quotient in $BjB=B$.
The grading also descends, since $I_\lambda$ lies in the identity
component. This argument does not require centralizers to commute
with the possibly nonflat specialization $R\to\OO$.

If $N\leq U\leq G$, then $U=N(U\cap H_0)$. Taking the direct
sums over the components indexed by $U/N$ in
\eqref{dgn:eq:c91-corner-grading} and
\eqref{dgn:eq:c91-centralizer-grading} gives the restriction
isomorphism. It uses the same matrix units $E_{rs}$, idempotent $j$
and elements $\sigma_h$, and therefore commutes with specialization.
\end{proof}

For the application to Theorem~A, take $G=A_\theta$ and
$H_0=H_\theta$. The identity $\theta_Z=\theta(1)\lambda$ shows
that $\lambda$ is $G$-invariant, so the corollary applies.

\subsection{Correspondences for intermediate groups}

We now return to the group notation of Theorem~A, with $G=A_\theta$,
$H_0=H_\theta$ and the fixed central character $\lambda$.
Extend scalars to the coefficient field $K$ of the splitting
$p$-modular system $(K,\OO,k)$. For each intermediate group
$N\leq U\leq G$, set $U'=U\cap H=U\cap H_0$.
The corner isomorphism on the $\lambda$-fiber induces a bijection
\[
 \Delta_U:\Irr(U\mid\theta)\longrightarrow\Irr(U'\mid\phi).
\]
To describe it explicitly, let $\pi:K[U]\to\End_K(V)$ afford
$\chi\in\Irr(U\mid\theta)$, and set
$V_i=\pi(i)V$ and $W=\pi(j)V_i$. Since $\kappa(U')$ centralizes
$S_0$, it commutes with $j$ and preserves $W$. The representation
\[
 U'\longrightarrow\GL_K(W),\qquad
 x\longmapsto\pi(\kappa(x))|_W
\]
affords $\Delta_U(\chi)$.

On this fiber, the base corner algebra is
$M_n(K)\otimes_K M_e(K)$. Consequently,
\begin{equation}\label{dgn:eq:degree-ratio}
 \frac{\chi(1)}{\Delta_U(\chi)(1)}
       =\frac de=\frac{\theta(1)}{\phi(1)}.
\end{equation}
Indeed, if $\chi_N=m\theta$, then
$\dim_K V=md$, $\dim_K V_i=mne$ and $\dim_K W=me$.
Thus $\Delta_U$ preserves relative degrees.

\subsection{Relative degrees and height-zero characters}

Write $\Delta_\theta=\Delta_M$.

\begin{lemma}\label{dgn:lem:central91-height-fibers}
The map $\Delta_\theta$ restricts to a bijection
\[
 \Delta_\theta:\Irr_0(M\mid\theta)
       \longrightarrow\Irr_0(M'\mid\phi).
\]
Every $\xi\in\Irr_0(M\mid\theta)$ is an extension of $\theta$,
and every $\eta\in\Irr_0(M'\mid\phi)$ is an extension of $\phi$.
\end{lemma}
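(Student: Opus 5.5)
The plan is to compute heights directly on both sides, using the defect-group structure of the blocks of $M$ and $M'$ lying over $\theta$ and $\phi$. Then we transport the height-zero condition through the relative-degree identity \eqref{dgn:eq:degree-ratio}.

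\emph{Global side.} Since $M/N$ is a $p$-group and $\theta$ is $M$-invariant, every $\xi\in\Irr(M\mid\theta)$ lies in the unique covering block $B$, with defect group $D$. Green's theorem on $p$-extensions (or Clifford theory over a $p$-group quotient) gives $\xi_N=m\theta$ with $m=\xi(1)/\theta(1)$ a power of $p$. Because $b$ has central defect group $Z$, we have $\theta(1)_p=|N:Z|_p$. Using $M=ND$ and $N\cap D=Z$ from \eqref{dgn:eq:basic-groups}, we get $|M:D|_p=|N:Z|_p$. Then \eqref{pre:eq:height} gives
\[
 p^{\htc(\xi)}=\xi(1)_p/|M:D|_p=m .
\]
Hence $\xi$ has height zero if and only if $m=1$, that is, if and only if $\xi$ extends $\theta$.

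\emph{Local side.} We argue the same way for $\eta\in\Irr(M'\mid\phi)$. First, every such $\eta$ lies in the Brauer correspondent $B'$ of $B$ in $M'=LD$, which has defect group $D$. This holds because $M'/L$ is a $p$-group, so a unique block covers $b'$, and by construction $b'$ is the block covered by the Brauer correspondent of $B$. The block $b'$ of $L$ has defect group $Z$, since $q(b')$ has defect zero; hence $\phi(1)_p=|L:Z|_p$. Also $|M':D|_p=|L:L\cap D|_p=|L:Z|_p$, because $L\cap D=N\cap D=Z$. Thus $\htc(\eta)=0$ if and only if $\eta(1)=\phi(1)$, that is, if and only if $\eta$ extends $\phi$.

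\emph{Conclusion.} We apply \eqref{dgn:eq:degree-ratio} with $U=M$, so $U'=M\cap H_\theta$. One point to check is that $M\le A_\theta$, which holds since $\theta$ is $M$-invariant. The other is that $M\cap H_\theta=M'$: the inclusion $M\cap H_\theta\le M'$ is clear, and conversely $M'\le A_\theta$ because $M'\le M$. With these, the identity $\chi(1)/\theta(1)=\Delta_M(\chi)(1)/\phi(1)$ shows that $\Delta_\theta$ sends extensions of $\theta$ exactly to extensions of $\phi$. Since $\Delta_M$ is a bijection $\Irr(M\mid\theta)\to\Irr(M'\mid\phi)$, it restricts to the claimed bijection.

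\emph{Main obstacle.} I expect the delicate step to be the block membership of $\eta$ with defect group exactly $D$. This needs uniqueness of the block of $M'$ covering $b'$, which holds since $M'/L$ is a $p$-group. It also needs the identification of that block with the Brauer correspondent of $B$, which is the definition of $b'$ together with Brauer's first main theorem. The remaining steps are order computations.
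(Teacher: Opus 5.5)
Your proposal is correct and follows essentially the paper's argument. It uses the same ingredients: the central-defect degree formulas $\theta(1)_p=|N:Z|_p$ and $\phi(1)_p=|L:Z|_p$, the index identities coming from $M=ND$ and $M'=LD$, membership of $\xi$ in $B$ and of $\Delta_\theta(\xi)$ in its Brauer correspondent (both with defect group $D$), Clifford theory over the $p$-group quotients, and the relative-degree identity \eqref{dgn:eq:degree-ratio}. The only difference is the order of steps: the paper first shows $\htc(\xi)=\htc(\Delta_\theta(\xi))$ for all $\xi\in\Irr(M\mid\theta)$ and then deduces the extension statement, whereas you first prove that height zero is equivalent to being an extension on each side and then transport extensions through the degree ratio.
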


\begin{proof}
The central-defect degree formulas give
\[
 \theta(1)_p=|N:Z|_p,\qquad \phi(1)_p=|L:Z|_p.
\]
Since $M=ND$, $M'=LD$ and $N\cap D=L\cap D=Z$, we also have
\[
 |M:D|_p=|N:Z|_p,\qquad |M':D|_p=|L:Z|_p.
\]
Let $B$ be the block of $M$ from Theorem~A, and let $B_0$ be the
unique block of $M'$ covering $b'$. In the generalized DGN
configuration, $B_0$ is the Brauer correspondent of $B$, and both
blocks have defect group $D$. By uniqueness of the covering blocks, each
$\xi\in\Irr(M\mid\theta)$ lies in $B$, and its image under
$\Delta_\theta$ lies in $B_0$. Hence equation~\eqref{dgn:eq:degree-ratio} gives
\[
 \htc(\xi)
 =v_p\!\left(\frac{\xi(1)}{\theta(1)}\right)
 =v_p\!\left(\frac{\Delta_\theta(\xi)(1)}{\phi(1)}\right)
 =\htc(\Delta_\theta(\xi)).
\]
Thus $\Delta_\theta$ preserves heights and restricts to the stated
bijection on height-zero characters.

The extension assertion is the fact used at the beginning of the
proof of \cite[Theorem~5.13]{NS14}. For completeness, it follows
from Clifford theory as follows. Since $\theta$ is $M$-invariant,
we have $\xi_N=e\theta$ for some positive integer $e$. This
multiplicity is the degree of an irreducible projective
representation of the $p$-group $M/N$, and is therefore a power of
$p$. If $\xi$ has height zero, then
$v_p(e)=v_p(\xi(1)/\theta(1))=0$, so $e=1$. Thus $\xi$ extends
$\theta$. The same argument, applied to $M'/L$, shows that every
$\eta\in\Irr_0(M'\mid\phi)$ extends $\phi$.

In particular, restriction to $N$ gives, for each such $\xi$,
\[
 A_\xi\leq A_\theta,\qquad
 (H\times\HH)_\xi\leq(H\times\HH)_\theta.
\]
Consequently, the construction on the fixed $\theta$-fiber applies
to every mixed stabilizer element needed for the final relation.
\end{proof}

\section{Mixed comparison functions}\label{sec:mixed}

We prove that the corner correspondence is equivariant under the mixed
stabilizer and construct associated projective representations with
matching comparison functions. We then show that the tensor
correspondence defined by this pair agrees with the corner correspondence
for every intermediate group under consideration.

\subsection{Mixed covariance on a fixed fiber}
\label{dgn:sec:synchronization}

We use the maps $\sigma$ and $\kappa$ and the idempotent $j$
constructed in Sections~\ref{sec:lift} and~\ref{sec:corner}, and write
\[
 G=A_\theta,\qquad H_0=H_\theta,\qquad
 \Gamma=(H\times\Gal(K/\Q_p))_\theta.
\]
Here $K$ in $\Gal(K/\Q_p)$ denotes the coefficient field of the
splitting $p$-modular system $(K,\OO,k)$. Let $\lambda\in\Irr(Z)$
be determined by $\theta_Z=\theta(1)\lambda$. As in Section~\ref{sec:corner},
we also denote by $\lambda$ its $\OO$-linear extension
\[
 \lambda:R=\OO Z\longrightarrow\OO,\qquad
 \sum_{z\in Z}a_z z\longmapsto\sum_{z\in Z}a_z\lambda(z),
\]
and set $I_\lambda=\ker\lambda$. This ideal generates two-sided
ideals in $\OO G b$ and $\OO H_0b'$. A subscript $\lambda$ denotes
the corresponding specialization.

\begin{lemma}\label{dgn:lem:fiber-covariance}
For every $a=(h,\gamma)\in\Gamma$, the map $\beta_a$ preserves
$I_\lambda$. If $x\in H_0$ and $y=h^{-1}xh$, then $y\in H_0$ and
\begin{equation}\label{dgn:eq:kappa-fiber-covariance}
 \beta_a(\kappa(x))_\lambda=\kappa(y)_\lambda.
\end{equation}
\end{lemma}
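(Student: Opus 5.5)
The plan is to compute $\beta_a(\kappa(x))$ directly from the definition \eqref{dgn:eq:c91-kappa}, using the integral identities of Theorem~\ref{dgn:thm:c91-integral-lift} for the full group $\Gamma$ supplied by Proposition~\ref{dgn:prop:c91-full-integral-mixed}. The goal is the integral identity $\beta_a(\kappa(x))=\sigma(a)^{-1}\kappa(y)\sigma(a)$. The conjugating factor $\sigma(a)$ should then become central against $\kappa(y)$ after specialization at $\lambda$.

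\textbf{Preliminaries.} Write $a=(h,\gamma)\in\Gamma$.
\begin{enumerate}
\item Since $a$ fixes $\theta$ and $\theta_Z=\theta(1)\lambda$, it fixes $\lambda$. From \eqref{dgn:eq:c91-natural-action}, for $r=\sum c_z z\in R$ we get $\lambda(\beta_a(r))=\sum\gamma(c_z)\lambda(h^{-1}zh)=\gamma(\lambda(r))$. Thus $\lambda\circ\beta_a=\gamma\circ\lambda$, and $\beta_a(I_\lambda)=I_\lambda$.
\item For $x\in H_0=H_\theta$, put $y=h^{-1}xh\in H$. Group and Galois actions commute, so $\theta^{h}=\theta^{\gamma^{-1}}$ is also $x$-invariant. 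Hence $\theta^{y}=\theta$, that is, $y\in H_0$.
\item $\beta_a$ fixes $i$ by Lemma~\ref{dgn:lem:c91-natural-residual}. Therefore
\[
 \beta_a(\kappa(x))=y\,i\,\beta_a(\sigma_x)^{-1}.
\]
\end{enumerate}

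\textbf{The integral identity.} In $\Gamma$ we have $(x,1)\,a=(xh,\gamma)=a\,(y,1)$. Applying the crossed identity \eqref{dgn:eq:c91-integral-crossed} to both factorizations gives
\[
 \varepsilon_a(\sigma_x)\sigma(a)=\varepsilon_y(\sigma(a))\sigma_y .
\]
The inner identity \eqref{dgn:eq:c91-integral-inner}, applied to $\sigma_x\in S$, then yields
\[
 \beta_a(\sigma_x)=\sigma(a)^{-1}\varepsilon_a(\sigma_x)\sigma(a)
 =\sigma(a)^{-1}\varepsilon_y(\sigma(a))\sigma_y .
\]
Substituting, we obtain
\[
 \beta_a(\kappa(x))=y\,i\,\sigma_y^{-1}\varepsilon_y(\sigma(a))^{-1}\sigma(a)
 =\kappa(y)\,\varepsilon_y\bigl(\sigma(a)^{-1}\bigr)\sigma(a).
\]
By \eqref{dgn:eq:c91-kappa-centralizes}, $\kappa(y)\varepsilon_y(s)=s\,\kappa(y)$ for $s\in S$. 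Taking $s=\sigma(a)^{-1}$ gives $\beta_a(\kappa(x))=\sigma(a)^{-1}\kappa(y)\sigma(a)$.

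\textbf{Specialization.} $\varepsilon_y$ acts entrywise on the fixed matrix form through conjugation by $y$ on $R$. Since $y\in H_0$ fixes $\lambda$, we have $\lambda\circ\varepsilon_y=\lambda$ entrywise, so $\varepsilon_y(s)_\lambda=s_\lambda$ for every $s\in S$. Combined with \eqref{dgn:eq:c91-kappa-centralizes}, this shows that $\kappa(y)_\lambda$ commutes with $\sigma(a)_\lambda$. Hence
\[
 \beta_a(\kappa(x))_\lambda=\sigma(a)_\lambda^{-1}\kappa(y)_\lambda\sigma(a)_\lambda=\kappa(y)_\lambda .
\]
The specialization of $\beta_a(\kappa(x))$ is well defined because $\beta_a$ preserves $I_\lambda$.

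\textbf{Main obstacle.} The delicate step is the crossed-identity bookkeeping with right actions. One must get the order $(x,1)a=a(y,1)$ right and track where $\varepsilon_a$ versus $\varepsilon_y$ appears. One must also see that $\sigma(a)$ genuinely survives integrally and only drops out after specialization, since $H_0$ may act nontrivially on $R$ (Remark~\ref{dgn:rem:c91-noncentral-R}).
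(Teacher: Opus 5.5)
Your proposal is correct and is essentially the paper's proof. You compare the crossed identity along $(x,1)a=a(y,1)$ to get $\beta_a(\sigma_x)=\sigma(a)^{-1}\varepsilon_y(\sigma(a))\sigma_y$, and the correction factor $\sigma(a)^{-1}\varepsilon_y(\sigma(a))$ becomes the identity on specialization because $\varepsilon_y$ is trivial on $S_\lambda$; your integral repackaging $\beta_a(\kappa(x))=\sigma(a)^{-1}\kappa(y)\sigma(a)$ via \eqref{dgn:eq:c91-kappa-centralizes} is only a cosmetic variant. One small slip: in step (ii) the character you need to be $x$-invariant is $\theta^{h^{-1}}=\theta^{\gamma}$, since invariance of $\theta^{h}$ only gives $hxh^{-1}\in H_0$; the conclusion $y\in H_0$ still holds because Galois conjugates share stabilizers, so $h$ normalizes $H_0$.
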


\begin{proof}
Since $\theta^{h\gamma}=\theta$, restriction to $Z$ gives
$\gamma(\lambda(hzh^{-1}))=\lambda(z)$ for all $z\in Z$.
Replacing $z$ by $h^{-1}zh$ and extending linearly, we obtain
\[
 \lambda(\beta_a(r))=\gamma(\lambda(r))\qquad(r\in R).
\]
Thus $\beta_a$ preserves $I_\lambda$ and induces the action of
$\gamma$ on $R/I_\lambda\cong\OO$. Since group and Galois actions
commute, Galois-conjugate characters have the same stabilizer under
the group action. It follows that $h$ normalizes both $G$ and $H_0$;
in particular, $y\in H_0$.

Identify $x,y$ with the elements $(x,1),(y,1)$ of $\Gamma$.
The equality $xa=ay$ and the right crossed identity give
\[
 \varepsilon_a(\sigma(x))\sigma(a)
      =\varepsilon_y(\sigma(a))\sigma(y).
\]
Using $\beta_a(s)=\sigma(a)^{-1}\varepsilon_a(s)\sigma(a)$
for $s\in S$, we therefore obtain the identity over $R$
\begin{equation}\label{dgn:eq:sigma-correction}
 \beta_a(\sigma(x))
   =\sigma(a)^{-1}\varepsilon_y(\sigma(a))\sigma(y).
\end{equation}
The element $y\in H_0$ fixes $\lambda$ and acts trivially on
$\OO$. Hence $\varepsilon_y$ induces the identity on
$S_\lambda\cong M_n(\OO)$, and specialization gives
$\beta_a(\sigma(x))_\lambda=\sigma(y)_\lambda$.
Together with $\beta_a(i)=i$ and $\beta_a(x)=y$, this proves
\eqref{dgn:eq:kappa-fiber-covariance}. The correction factor
$\sigma(a)^{-1}\varepsilon_y(\sigma(a))$ becomes the identity
after specialization; it need not be the identity over $R$.
\end{proof}

\subsection{Equivariance of the character correspondence}

From this point onward, we extend scalars to the coefficient field
$K$ and use the same notation for the resulting algebra elements
and maps. In particular, $S_\lambda\cong M_n(K)$. We use the group-algebra notation $K[X]$ fixed in
Section~\ref{sec:prelim}.

\begin{lemma}\label{dgn:lem:delta-equivariance}
The correspondence
\[
 \Delta_M:\Irr(M\mid\theta)\longrightarrow\Irr(M'\mid\phi)
\]
is $\Gamma$-equivariant.
\end{lemma}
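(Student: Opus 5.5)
The plan is to check equivariance directly on representations, using the fiber covariance of Lemma~\ref{dgn:lem:fiber-covariance}. Fix $a=(h,\gamma)\in\Gamma$ and $\chi\in\Irr(M\mid\theta)$. By Lemma~\ref{dgn:lem:fiber-covariance}, $h$ normalizes $G$ and $H_0$. It also normalizes $M$, $N$ and $D$. Hence $h$ normalizes $M'=N_M(D)$, and $M'\cap H_0$ is preserved by conjugation with $h$. Let $\pi:K[M]\to\End_K(V)$ afford $\chi$. Then $\chi^a$ is afforded by $\pi^a:=\gamma\circ\pi\circ\beta_a^{-1}$, interpreted as follows. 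Choose a $K$-basis of $V$, apply $\gamma$ entrywise, and precompose with the inverse of the natural action \eqref{dgn:eq:c91-natural-action}. Equivalently, $\pi^a(u)=\pi(\beta_a^{-1}(u))^{\gamma}$ for $u\in K[M]$, extended semilinearly. Since $\chi$ lies over $\theta$ and $a$ fixes $\theta$, the representation $\pi$ factors through the specialization at $\lambda$. So does $\pi^a$, because $\beta_a$ preserves $I_\lambda$ and induces $\gamma$ on $R/I_\lambda$.

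Next I will compute $\Delta_M(\chi^a)$ from the explicit description in Section~\ref{sec:corner}. The representation $\pi^a$ sends $i$ to $\pi(\beta_a^{-1}(i))^\gamma=\pi(i)^\gamma$, since $\beta_a(i)=i$. The main point is that $\pi^a(j)$ need not equal $\pi(j)^\gamma$, because $\beta_a$ does not fix the matrix units $E_{rs}$. It does, however, act on $S_\lambda\cong M_n(K)$ by a $\gamma$-semilinear automorphism. By \eqref{dgn:eq:c91-integral-inner}, this automorphism is $s\mapsto\sigma(a)_\lambda^{-1}\gamma(s)\sigma(a)_\lambda$. Thus $\pi^a(j)$ and $\pi(j)^\gamma$ are conjugate idempotents in the image of $S_\lambda$. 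Explicitly, $\pi^a(j)$ is the conjugate of $\pi(j)^\gamma$ by $\pi(\sigma(a)_\lambda)^{\gamma}$ or its inverse; I will track the exact exponent while writing the proof. This element commutes with $\kappa(M')$ up to the correction recorded in Lemma~\ref{dgn:lem:fiber-covariance}. Let $W^a=\pi^a(j)\pi^a(i)V$ be the multiplicity space for $\chi^a$. Then $W^a$ is the image of $W^\gamma$ (the $\gamma$-twist of $W$) under an invertible operator $Q$ coming from $\sigma(a)_\lambda$.

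The key step is to show that $Q$ intertwines the two local representations. On one side, $x\mapsto\pi^a(\kappa(x))|_{W^a}$ with $x\in M'$. On the other, the $a$-conjugate of $y\mapsto\pi(\kappa(y))|_W$, that is, $x\mapsto\pi(\kappa(hxh^{-1}))^\gamma|_{W^\gamma}$. Substituting the definition gives $\pi^a(\kappa(x))=\pi(\beta_a^{-1}(\kappa(x)))^\gamma$. Applying \eqref{dgn:eq:kappa-fiber-covariance} with $x$ replaced by $hxh^{-1}$ gives $\beta_a^{-1}(\kappa(x))_\lambda=\kappa(hxh^{-1})_\lambda$. So the two formulas agree exactly after specialization. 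Moreover, $\kappa(M')$ centralizes $S_0$, and after specialization it commutes with all of $S_\lambda$, since the fiber scalars are central. Therefore $\kappa(hxh^{-1})$ commutes with $\pi(\sigma(a)_\lambda)$. Hence $Q$ carries the restriction to $W^\gamma$ onto the restriction to $W^a$. This proves that $\Delta_M(\chi^a)$ and $\Delta_M(\chi)^a$ are afforded by isomorphic representations, so $\Delta_M(\chi^{a})=\Delta_M(\chi)^{a}$.

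The main obstacle is this commutation step. Over $R$, $\kappa(x)$ centralizes only $S_0$, not $\sigma(a)$, and \eqref{dgn:eq:sigma-correction} shows a nontrivial correction $\sigma(a)^{-1}\varepsilon_y(\sigma(a))$. I will handle this by working entirely in the $\lambda$-specialized corner. There, $S_\lambda=K S_0$, and $\varepsilon_y$ acts trivially for $y\in H_0$, as in the proof of Lemma~\ref{dgn:lem:fiber-covariance}. So every element of $C_{iBi}(S_0)$ commutes with the image of $S_\lambda$, and the correction disappears. A second, minor point is that the Galois twist of a representation is well defined only up to isomorphism. Trace comparison then suffices: characters determine representations over $K$, so it is enough to verify $\tr(\pi^a(\kappa(x))|_{W^a})=\gamma(\tr(\pi(\kappa(hxh^{-1}))|_W))$. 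This identity follows from the conjugacy by $Q$ established above.
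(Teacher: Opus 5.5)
Your argument is correct, but it takes a different route from the paper.

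\textbf{The paper's route.} The paper never compares the compressed spaces $W$ and $W^a$. It uses two facts: $\kappa(x)_\lambda$ centralizes $S_\lambda\cong M_n(K)$, and it acts as zero on $(1-\pi(i))V$. Together these give $\tr\pi(\kappa(x))=n\,\Delta_M(\xi)(x)$ as a trace on all of $V$. This trace does not involve $j$, so the fact that $\beta_a$ moves the matrix units plays no role. Combining $\pi_a(\beta_a(u))=\pi(u)^\gamma$ with \eqref{dgn:eq:kappa-fiber-covariance} gives $n\Delta_M(\xi^a)(y)=\gamma\bigl(n\Delta_M(\xi)(x)\bigr)$, and one cancels $n$.

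\textbf{Your route.} You build an explicit isomorphism $Q\colon W^\gamma\to W^a$ from a unit in the image of $S_\lambda$ and check that it intertwines the two local representations. That is heavier for this lemma. However, it is exactly the ``corrected intertwiner'' $U=TA_a^{-1}$ that the paper needs later in Proposition~\ref{dgn:prop:common-projectives}, so your route anticipates that step.

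\textbf{Points to fix when writing it out.}

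First, the conjugator. From $\beta_a(j)=\sigma(a)^{-1}j\sigma(a)$ one gets $\pi^a(j)=\pi^a(\sigma(a))\,\pi(j)^\gamma\,\pi^a(\sigma(a))^{-1}$, with
\[
 \pi^a(\sigma(a))=\pi\bigl(\varepsilon_a^{-1}(\sigma(a))\bigr)^\gamma .
\]
In general this is neither $\pi(\sigma(a)_\lambda)^\gamma$ nor its inverse. This does not hurt the argument. Since $\beta_a$ preserves $S$, the conjugator lies in $\pi^a(S)=\pi(S)^\gamma$. Every element of $\pi(S)^\gamma$ commutes with $\pi(\kappa(hxh^{-1}))^\gamma$, because $\kappa(hxh^{-1})$ centralizes $S$.

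Second, the ``main obstacle'' you describe does not actually arise on $M'$. Since $Z\le Z(M)$, the map $\varepsilon_y$ is trivial on $R$ for $y\in M'$. Hence $\kappa(y)$ already centralizes all of $S$ over $R$, and the correction factor in \eqref{dgn:eq:sigma-correction} equals $1$ before specialization.

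Third, $\pi^a(u)=\pi(\beta_a^{-1}(u))^\gamma$ is $K$-linear, not semilinear: the two semilinear twists cancel.
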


\begin{proof}
Fix $\xi\in\Irr(M\mid\theta)$ and let
$\pi:K[M]\to\End_K(V)$ afford $\xi$. For $x\in M'$, the
element $\kappa(x)_\lambda$ centralizes $S_\lambda\cong M_n(K)$,
because $x$ fixes $\lambda$. On $V_i=\pi(i)V\cong K^n\otimes_K W$,
it therefore acts as $1\otimes\pi'(x)$, where $\pi'$ affords
$\Delta_M(\xi)$. On $(1-\pi(i))V$, it acts as zero. Thus
\begin{equation}\label{dgn:eq:trace-delta}
 \tr\pi(\kappa(x))=n\Delta_M(\xi)(x).
\end{equation}

Let $a=(h,\gamma)\in\Gamma$. The representation
$\pi_a(g)=\pi(hgh^{-1})^\gamma$ affords $\xi^a$. Its
$K$-linear extension satisfies
\[
 \pi_a(\beta_a(u))=\pi(u)^\gamma\qquad(u\in K[M]),
\]
where $\gamma$ is applied entrywise to matrices. For $x\in M'$
and $y=h^{-1}xh$, Lemma~\ref{dgn:lem:fiber-covariance} and
\eqref{dgn:eq:trace-delta} now give
\[
 n\Delta_M(\xi^a)(y)
       =\gamma\bigl(n\Delta_M(\xi)(x)\bigr).
\]
Since $n$ is a positive integer, it is fixed by $\gamma$ and may be
cancelled. We conclude that $\Delta_M(\xi^a)=\Delta_M(\xi)^a$.
This argument establishes equivariance without assuming equality of
the upper mixed stabilizers.
\end{proof}

\subsection{Associated projective representations and synchronization}

\begin{proposition}\label{dgn:prop:common-projectives}
Fix $\xi\in\Irr(M\mid\theta)$ and set $\eta=\Delta_M(\xi)$.
There are projective representations
\[
 \mathcal P:A_\xi\longrightarrow\GL(V),\qquad
 \mathcal P':H_\eta\longrightarrow\GL(W),
\]
associated with $\xi$ and $\eta$, respectively, whose factor sets
agree on $H_\eta\times H_\eta$. For every $c\in C_{A_\xi}(M)$,
the matrices $\mathcal P(c)$ and $\mathcal P'(c)$ are scalar with
the same scalar. Moreover, $H_\xi=H_\eta$, and for every
$a\in(H\times\HH)_\xi$ the comparison functions satisfy
\[
 \mu'_a=\mu_a|_{H_\xi}.
\]
\end{proposition}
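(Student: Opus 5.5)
The plan is to build $\mathcal P$ first and then obtain $\mathcal P'$ by compressing $\mathcal P$ through the corner of Proposition~\ref{dgn:prop:c91-graded-corner}, using the elements $\sigma_x$. Then factor sets, central scalars and comparison functions are all inherited from $\mathcal P$ and the crossed identities.

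\emph{Groups.} Since $\theta$ is $M$-invariant, $\xi_N$ is a multiple of $\theta$, so $A_\xi\leq A_\theta=G=NH_0$. Hence $A_\xi=N(A_\xi\cap H_0)=M H_\xi$. By Lemma~\ref{dgn:lem:delta-equivariance}, applied to pure group elements of $H_0\leq\Gamma$, we get $H_\xi=H_\eta$.

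\emph{The pair $(\mathcal P,\mathcal P')$.} Let $\mathcal P:A_\xi\to\GL(V)$ be any associated projective representation with root-of-unity factor set $\alpha$ and entries in $K$. Extending linearly, $\mathcal P$ restricted to $M$ affords $\pi$, with $V_i=\pi(i)V$ and $W=\pi(j)V_i$ as in Section~\ref{sec:corner}. For $x\in H_\xi$ define
\[
 \mathcal P'(x)=\mathcal P(x)\,\pi(\sigma_x)^{-1}\big|_W ,
\]
the inverse taken in the corner. On the $\lambda$-fiber $\varepsilon_x$ acts trivially on $S_\lambda$, so \eqref{dgn:eq:c91-action} shows that $\mathcal P(x)\pi(\sigma_x^{-1})$ commutes with $\pi(S_\lambda)$, hence with $\pi(j)$, and preserves $W$. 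The crossed identity \eqref{dgn:eq:c91-crossed} specializes to a genuine cocycle-free multiplication $\sigma_{xy}=\sigma_x\sigma_y$ on the fiber. Combining this with $\mathcal P(x)\mathcal P(y)=\alpha(x,y)\mathcal P(xy)$ and the computation in the proof that $\kappa$ is a homomorphism gives
\[
 \mathcal P'(x)\mathcal P'(y)=\alpha(x,y)\mathcal P'(xy).
\]
By \eqref{dgn:eq:c91-trivial-L} and the description of $\Delta_M$ via $\kappa$, the restriction of $\mathcal P'$ to $M'$ affords $\eta$, and it is multiplicative on $M'$-cosets. So $\mathcal P'$ is associated with $\eta$ and has the same factor set as $\mathcal P$ on $H_\eta$. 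For $c\in C_{A_\xi}(M)$, Schur's lemma makes $\mathcal P(c)$ a scalar, and Proposition~\ref{dgn:prop:c91-full-integral-mixed} gives $\sigma(c)=1$. Hence $\mathcal P'(c)$ is the same scalar restricted to $W$.

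\emph{Comparison functions.} Let $a=(h,\gamma)\in(H\times\HH)_\xi$, realized through $\Gal(K/\Q_p)$. Write $\mathcal P^a(x)=\mu_a(x)T_a\mathcal P(x)T_a^{-1}$. The natural candidate for the local intertwiner is
\[
 T'_a=T_a\,\pi(\sigma(a))^{\pm1}\big|_W ,
\]
with the sign fixed by the conventions in \eqref{dgn:eq:sigma-correction}. Apply $\beta_a$ to $\mathcal P'(x)$ and use Lemma~\ref{dgn:lem:fiber-covariance}, in the refined form \eqref{dgn:eq:sigma-correction}. The correction factor $\sigma(a)^{-1}\varepsilon_y(\sigma(a))$ is absorbed into conjugation by $T'_a$, because on the fiber $\varepsilon_y$ is trivial. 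This should give $\mathcal P'^a(x)=\mu_a(x)T'_a\mathcal P'(x)T'^{-1}_a$, and uniqueness in \cite[Lemma~1.4]{NSV20} then yields $\mu'_a=\mu_a|_{H_\xi}$.

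\emph{Main obstacle.} This last step is the delicate one. First, $T_a$ must preserve $V_i$ and $W$ up to the twist by $\pi(\sigma(a))$; I would prove this from $\beta_a(i)=i$ and \eqref{dgn:eq:c91-integral-inner}. Second, the entrywise Galois action on matrices over $\Qab$ must be matched with the coefficient action $\beta_a$ on $K[M]$ and on $S_\lambda$. Here the normalization $\det\sigma(a)=t(\det\bar\sigma(a))$ is what should prevent a stray scalar from entering $\mu'_a$.
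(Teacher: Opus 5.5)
Your construction is the paper's. Because $\alpha$ is inflated from $A_\xi/M$, your $\mathcal P'(x)=\mathcal P(x)\pi(\sigma_x)^{-1}|_W$ coincides with the paper's $\pi(\kappa(x))|_W$, and your factor-set and central-scalar arguments are the paper's. The paper's local intertwiner is $U=T\,\pi(\sigma(a))^{-1}$ on $V_i$, so your sign is $-1$; it carries $W$ onto $W^\gamma$ because $\beta_a(j)=\sigma(a)^{-1}j\sigma(a)$. Three points need repair, and the first is a real omission.

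\emph{First}, your derivation of $H_\xi=H_\eta$ only gives $H_\xi\le H_\eta$. You can apply $\Gamma$-equivariance of $\Delta_M$ to $h\in H_\eta$ only once you know $h\in H_\theta$. To get this, restrict $\eta$ to $L$ to see that $h$ fixes $\phi$, and use \eqref{dgn:eq:bottom-stabilizers}. Then $\Delta_M(\xi^h)=\eta^h=\eta=\Delta_M(\xi)$, and injectivity of $\Delta_M$ gives $\xi^h=\xi$.

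\emph{Second}, the determinant normalization \eqref{dgn:eq:c91-integral-determinant} is not what controls $\mu'_a$. It serves to construct an exact crossed homomorphism and to force $\sigma(l)=\sigma(c)=1$. In the comparison step, rescaling the intertwiner never changes $\mu'_a$. What excludes a stray scalar is that \eqref{dgn:eq:c91-integral-crossed} holds exactly for the pairs $(x,a)$ and $(a,y)$ in the full mixed group $\Gamma$. That gives \eqref{dgn:eq:sigma-correction} and hence \eqref{dgn:eq:kappa-fiber-covariance}. The remaining ingredient, which your sketch leaves implicit, is that $\pi(\kappa(y))$ commutes with $\pi(\sigma(a))$ on $V_i$. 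This follows from your own observation that $\mathcal P(y)\pi(\sigma_y^{-1})$ commutes with $\pi(S_\lambda)$. It is exactly what lets you replace $T$ by $U$ in $\pi(\kappa(x))^\gamma=\mu_a(y)T\pi(\kappa(y))T^{-1}$ before restricting to $W$.

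\emph{Third}, $\mathcal P'$ is a priori defined only over the $p$-adic field $K$, whereas the comparison functions of the preliminaries use entries in $\Qab$. The paper conjugates $\mathcal P'$ by a matrix $C$ into a finite cyclotomic field. This preserves the factor set and replaces the intertwiner by $(C^\gamma)^{-1}UC$, so $\mu'_a$ is unchanged.
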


\begin{proof}
Since $\theta$ is $M$-invariant, we have $\xi_N=m\theta$ for some
positive integer $m$. Restriction to $N$ therefore gives $A_\xi\leq G$.
If $h\in H$ fixes $\eta$, restriction to $L$ shows that $h$ fixes
$\phi$. By \eqref{dgn:eq:bottom-stabilizers}, it also fixes
$\theta$. Lemma~\ref{dgn:lem:delta-equivariance}, together with
the bijectivity of $\Delta_M$, consequently yields $H_\xi=H_\eta$.
Since $A=MH$ and $M\leq A_\xi$, we also have $A_\xi=MH_\xi$
and a natural isomorphism $A_\xi/M\cong H_\xi/M'$. The factor
sets below are compared through this quotient identification.

Choose a projective representation $\mathcal P$ associated with
$\xi$, with normalized root-of-unity-valued factor set $\alpha$
strictly inflated from $A_\xi/M$. Let
$\pi:K[A_\xi]\to\End_K(V)$ denote its $K$-linear extension.
For $x,y\in A_\xi$, $u\in K[M]x$ and $v\in K[M]y$, we have
\begin{equation}\label{dgn:eq:homogeneous-projective}
 \pi(u)\pi(v)=\alpha(xM,yM)\pi(uv).
\end{equation}
In particular, the restriction of $\pi$ to $K[M]$ is an algebra
representation and factors through the fixed $\lambda$-fiber.

Set $V_i=\pi(i)V$, $J=\pi(j)|_{V_i}$ and $W=JV_i$. Define
\[
 \mathcal P'(x)=\pi(\kappa(x))|_W\qquad(x\in H_\xi).
\]
This is well defined because $\kappa(x)$ centralizes $j$.
Moreover, $\kappa(x)\in K[N]x\subseteq K[M]x$. The
multiplicativity of $\kappa$ and
\eqref{dgn:eq:homogeneous-projective} give
\[
 \mathcal P'(x)\mathcal P'(y)
       =\alpha(xM,yM)\mathcal P'(xy).
\]
Thus the factor set of $\mathcal P'$ is the restriction of that
of $\mathcal P$. On $M'$, the representation $\mathcal P'$ is
the compressed representation affording $\eta$. If
$c\in C_{A_\xi}(M)$, then $c$ centralizes $D\leq M$, so
$c\in H_\xi$. The normalization in Proposition~\ref{dgn:prop:c91-full-integral-mixed} gives
$\sigma(c)=i$ and hence $\kappa(c)=ci$. Since $\mathcal P(c)$
is scalar, its compression $\mathcal P'(c)$ has the same scalar.

We next compare the mixed comparison functions. Fix
$a=(h,\gamma)\in(H\times\HH)_\xi$. The equality
$\xi_N=m\theta$ implies that the image of $a$ in
$H\times\Gal(K/\Q_p)$ belongs to $\Gamma$. We write
$\sigma(a)$ for the realization of this finite image.
The element $h$ normalizes $A_\xi$ and $H_\xi$.
Let $T$ be an intertwining matrix in \eqref{pre:eq:comparison}.
For $u\in K[M]x$ and $y=h^{-1}xh$, linear extension of that
identity gives
\begin{equation}\label{dgn:eq:linear-mixed-intertwiner}
 \pi(u)^\gamma
       =\mu_a(y)T\pi(\beta_a(u))T^{-1}.
\end{equation}
Here $\mu_a$ is constant on $M$-cosets, and $\beta_a$ applies
$\gamma$ to the coefficients.

Since $\beta_a(i)=i$ and $\mu_a|_M=1$, the matrix $T$ maps
$V_i$ onto $V_i^\gamma$. We use the same symbol for this
restriction and set $A_a=\pi(\sigma(a))|_{V_i}$.
The chosen matrix units satisfy $\varepsilon_a(j)=j$, so
$\beta_a(j)=\sigma(a)^{-1}j\sigma(a)$.
Applying \eqref{dgn:eq:linear-mixed-intertwiner} to
$j\in K[N]\subseteq K[M]$, we obtain
\[
 J^\gamma=T A_a^{-1}J A_aT^{-1}.
\]
It follows that
\begin{equation}\label{dgn:eq:corrected-intertwiner}
 U=T A_a^{-1}:V_i\longrightarrow V_i^\gamma,
       \qquad UJU^{-1}=J^\gamma.
\end{equation}
Hence $U$ restricts to an isomorphism $W\to W^\gamma$.

For $x\in H_\xi$ and $y=h^{-1}xh$, apply
\eqref{dgn:eq:linear-mixed-intertwiner} to $\kappa(x)$ and use
\eqref{dgn:eq:kappa-fiber-covariance}. On $V_i$ and $V_i^\gamma$,
this gives
\[
 \pi(\kappa(x))^\gamma
       =\mu_a(y)T\pi(\kappa(y))T^{-1}.
\]
Since $y$ fixes $\lambda$, the element $\kappa(y)_\lambda$
centralizes $S_\lambda$ and thus commutes with $\sigma(a)$ on
the fiber. Also, $\sigma(a)\in K[N]\subseteq K[M]$, and strict
inflation of $\alpha$ gives
$\alpha(M,yM)=\alpha(yM,M)=1$.
Equation~\eqref{dgn:eq:homogeneous-projective} therefore implies
that $\pi(\kappa(y))|_{V_i}$ commutes with $A_a$. We may
consequently replace $T$ by $U$ in the preceding identity on
$V_i$ and $V_i^\gamma$. Restricting further to $W$ and
$W^\gamma$, and writing $x=hyh^{-1}$, we obtain
\[
 \mathcal P'(hyh^{-1})^\gamma
       =\mu_a(y)U\mathcal P'(y)U^{-1},
\]
where $U$ now denotes its restriction to $W$.
Uniqueness of comparison functions yields
$\mu'_a(y)=\mu_a(y)$. Thus the comparison functions agree even
when $J^\gamma\ne J$; their equality follows from the corrected
intertwiner \eqref{dgn:eq:corrected-intertwiner}.

Finally, we realize $\mathcal P'$ over a finite cyclotomic field.
This projective representation is initially defined over the $p$-adic
coefficient field. Its root-of-unity-valued factor set defines a finite central
extension on which it linearizes. Since this finite-group
representation can be realized over a finite cyclotomic field,
a change of basis over $\overline\Q_p$ gives such a realization
without changing the factor set. If
$\widetilde{\mathcal P}'=C^{-1}\mathcal P'C$, the corresponding
intertwiner is $(C^\gamma)^{-1}UC$, so the comparison function
remains $\mu_a$. Thus $\mathcal P$ and $\mathcal P'$ are defined
over $\Qab$, as required in \cite{NSV20}.

For each element of $\HH$, the calculation may be carried out
in a finite extension containing the required coefficients, or
after extending the action to $\Gal(\overline\Q_p/\Q_p)$.
Its restriction to the original coefficient field $K$ determines
the already chosen $\sigma(a)$. Enlarging the coefficient field
therefore leaves the chosen realization $\sigma$ unchanged.
\end{proof}

\subsection{Identification of the tensor and corner correspondences}

\begin{lemma}\label{dgn:lem:same-tensor-model}
With the notation of Proposition~\ref{dgn:prop:common-projectives},
let $M\leq U\leq A_\xi$. The tensor correspondence defined by
$\mathcal P,\mathcal P'$ agrees on $\Irr(U\mid\xi)$ with the
corner correspondence defined by $\kappa$.
\end{lemma}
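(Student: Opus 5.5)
Fix $\chi\in\Irr(U\mid\xi)$ and a representation $\rho:K[U]\to\End_K(X)$ affording it. The plan is to compute the corner correspondent of $\chi$ in tensor-decomposed form and match it with the tensor correspondent. Since $\xi$ is $U$-invariant, Clifford theory (\cite[Chapter~11]{Isa06}) lets us take
\[
 X=Y\otimes V,\qquad \rho(u)=\mathcal Q(uM)\otimes\mathcal P_U(u)\qquad(u\in U).
\]
Here $\mathcal Q$ is an irreducible projective representation of $U/M$ with factor set $\alpha^{-1}$, so $\chi=\tr(\mathcal Q\otimes\mathcal P_U)$. Extending linearly, for homogeneous $u\in K[M]x$ we get $\rho(u)=\mathcal Q(xM)\otimes\pi(u)$. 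This is consistent precisely because of \eqref{dgn:eq:homogeneous-projective} together with the factor set $\alpha^{-1}$ of $\mathcal Q$.

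Next, compute the corner correspondent. We have $i,j\in K[N]\subseteq K[M]$, lying in the identity component. Hence
\[
 \rho(i)=1_Y\otimes\pi(i),\qquad \rho(j)=1_Y\otimes\pi(j),
\]
and the corner space is $\rho(j)\rho(i)X=Y\otimes W$, with $W=JV_i$ as in Proposition~\ref{dgn:prop:common-projectives}. For $x\in U'=U\cap H$ we have $\kappa(x)\in K[N]x\subseteq K[M]x$. Therefore
\[
 \rho(\kappa(x))|_{Y\otimes W}=\mathcal Q(xM)\otimes\pi(\kappa(x))|_W=\mathcal Q(xM)\otimes\mathcal P'(x).
\]
The coset $xM$ corresponds to $xM'$ under $U/M\cong U'/M'$. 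Taking traces gives
\[
 \Delta_U(\chi)=\tr(\mathcal Q\otimes\mathcal P'_{U'}),
\]
which is exactly the tensor correspondent \eqref{pre:eq:tensor-correspondence}.

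The one point needing care is the setting of the two correspondences. The corner correspondence in Section~\ref{sec:corner} was defined for $N\leq U\leq G$ over $\theta$, while the tensor correspondence here is taken relative to $M$ over $\xi$. So we must check that the restriction of $\Delta_U$ to $\Irr(U\mid\xi)$ is the same map, obtained by the same recipe $x\mapsto\rho(\kappa(x))|_{\rho(j)\rho(i)X}$. This holds because $U\leq A_\xi\leq G$ and the construction uses the same $\kappa$, $i$ and $j$. The image of $\Delta_U$ lies over $\eta$ by restricting the computation to $M'$, where $\mathcal Q$ is trivial.

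The main obstacle I expect is the factor-set bookkeeping in the first step. We need $\rho$ on $K[M]x$ to factor as $\mathcal Q(xM)\otimes\pi(\cdot)$ without a stray scalar. This is where the strict inflation of $\alpha$ from $A_\xi/M$, giving $\alpha(M,yM)=\alpha(yM,M)=1$, is used, together with the choice of $\mathcal Q$ normalized at $1$ with factor set exactly $\alpha^{-1}$. I would verify it by writing $u=vx$ with $v\in K[M]$ and expanding $\rho(vx)=\rho(v)\rho(x)$.
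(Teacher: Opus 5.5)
Your proposal is correct and follows essentially the same route as the paper: realize $\chi$ on $Y\otimes V$ via $\mathcal Q\otimes\mathcal P_U$, observe that $i,j\in K[N]$ act as $1_Y\otimes\pi(\cdot)$ so that the corner space is $Y\otimes W$, and use $\kappa(x)\in K[N]x$ to see that $\kappa(x)$ acts there as $\mathcal Q(xM)\otimes\mathcal P'(x)$. The paper also notes that the cyclotomic change of basis $C$ from Proposition~\ref{dgn:prop:common-projectives} only conjugates by $1_Y\otimes C$ and so leaves traces unchanged; and the factor-set ``obstacle'' you anticipate is milder than you expect, since $\mathcal Q$ is inflated from $U/M$, so linearity alone gives $\rho(u)=\mathcal Q(xM)\otimes\pi(u)$ for $u\in K[M]x$.
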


\begin{proof}
Let $\mathcal Q\otimes\mathcal P_U$ afford
$\chi\in\Irr(U\mid\xi)$, where $\mathcal Q$ is an irreducible
projective representation of $U/M$ whose factor set inflates to
the inverse of that of $\mathcal P_U$. In this tensor product,
$\mathcal Q$ is inflated to $U$.
Let $E$ be the representation space of $\mathcal Q$, so that
$\chi$ is afforded on $E\otimes_K V$.
An element $u\in K[N]\subseteq K[M]$ acts as
$1_E\otimes\pi(u)$. Applying the idempotent $i$ therefore gives
$E\otimes_K V_i$, and applying $j$ next gives
\[
 (1_E\otimes J)(E\otimes_K V_i)=E\otimes_K W.
\]
For $x\in U\cap H$, the fact that $\kappa(x)\in K[N]x$
allows us to write
\[
 \kappa(x)=\sum_{n\in N}c_nnx\qquad(c_n\in K).
\]
Since $N\leq M$, all the elements $nx$ have the same image $xM$
in $U/M$. Hence $\kappa(x)$ acts on $E\otimes_K V$ as
$\mathcal Q(xM)\otimes\pi(\kappa(x))$. Its restriction to
$E\otimes_K W$ is therefore
\[
 \mathcal Q(xM)\otimes\mathcal P'(x).
\]
The tensor and corner constructions thus give the same
representation on $E\otimes_K W$. A subsequent
change of basis $C$ realizing $\mathcal P'$ over a cyclotomic
field changes this tensor representation by $1_E\otimes C$,
and hence leaves the character correspondence unchanged.
\end{proof}

\section{Block compatibility}\label{sec:blocks}

We prove that the character triple correspondence constructed in
Section~\ref{sec:mixed} satisfies the block condition. We first
identify its block correspondence on the test groups defined below, using
central quotients and the normalized residual magic construction.
The normalized trace criterion of Navarro and Sp\"ath then gives
block compatibility for every intermediate group.

\subsection{The fixed correspondence and the test groups}
\label{dgn:subsec:central91-blocks}

Work over the sufficiently large splitting $p$-modular system
$(K,\OO,k)$ fixed above. We use the configuration
\[
 \begin{gathered}
 N\leq M\trianglelefteq G,\qquad M=ND,\qquad
 Z=N\cap D\leq Z(M),\\
 H=N_G(D),\qquad L=N_N(D),\qquad M'=N_M(D).
 \end{gathered}
\]
Throughout this section, we write $G=A_\xi$ and $H=N_G(D)$,
where $\xi\in\Irr_0(M\mid\theta)$ is fixed. Thus
$N\trianglelefteq G$, $G=MH$, and $G$ fixes both $\theta$ and
$\xi$. Let $\phi$ and $\eta\in\Irr_0(M'\mid\phi)$ be their
local correspondents. The character $\eta$ is $H$-invariant.
As before, $b,b'$ are the base blocks, $i=bb'$, and $R=\OO Z$.
The blocks $\bl(\xi)$ and $\bl(\eta)$ both have defect group $D$.

\begin{definition}\label{dgn:def:test-group}
Let $M\trianglelefteq G$, and let $D\leq M$ be a $p$-subgroup.
A subgroup $J\leq G$ is called a \emph{test group for $(M,D)$} if
\[
 J=M\langle h\rangle
\]
for some $p'$-element $h\in C_G(D)$.
\end{definition}

For such a group $J$, the quotient $J/M$ is a cyclic $p'$-group.
Moreover, every subgroup $J_0$ with $M\leq J_0\leq J$ is again a
test group, since $J_0=M\langle h^r\rangle$ for some integer $r$.
In our configuration, $Z\leq D\cap Z(M)$, so $Z\leq Z(J)$.
When applying the normalized trace criterion in
Theorem~\ref{dgn:thm:central91-all-blocks}, we choose $h$ with the
additional property $D\in\Syl_p(C_M(h))$.

Proposition~\ref{dgn:prop:c91-graded-corner},
Lemma~\ref{dgn:lem:c91-full-residual},
Proposition~\ref{dgn:prop:common-projectives} and
Lemma~\ref{dgn:lem:same-tensor-model} provide the following data.
\begin{enumerate}
\item A fixed integral magic construction $\sigma$ determines graded
corner isomorphisms $\kappa_U$ for $M\leq U\leq G$, where
$U'=U\cap H$. These maps use the same matrix units in
$S\cong M_n(R)$ and satisfy
\[
 \kappa_U(yb')=y\sigma(y)^{-1}\qquad(y\in U').
\]
The associated matrix corners induce Morita correspondences between
$\OO Ub$ and $\OO U'b'$. Central idempotents $B$ of $\OO Ub$ and
$c$ of $\OO U'b'$ correspond precisely when $Bi=\kappa_U(c)$.
\item If $Z\leq Z(U)$, specialization at $z\mapsto1$, followed by
reduction modulo $J(\OO)$, gives the corner isomorphism for $U/Z$
determined by the residual magic construction. Its multiplicity
algebra $\bar S\cong M_n(k)$ has Brauer quotient $k$ at $P=D/Z$.
Let $\rho:P\to\bar S^\times$ be the unique homomorphism
implementing the $P$-action. The fusion homomorphism and the normalized
implementers satisfy
\[
 \Phi(\widehat T_y)=1\qquad(y\in H).
\]
Here $y$ is regarded as a pure group element. It fixes $\theta$
and acts trivially on $\OO$, so its residual field action is trivial
and $\widehat T_y=\bar\sigma(y)$ is $k$-linear. On units
centralizing $\rho(P)$, the map $\Phi$ agrees with
$\operatorname{Br}^{\bar S}_P$.
\item The fixed projective representations $\mathcal P,\mathcal P'$
associated with $\xi,\eta$ have a common root-of-unity-valued
factor set $\alpha$, inflated from $G/M\cong H/M'$, and equal
scalars on $C_G(M)$. Moreover, $\mathcal P'$ is obtained from
$\mathcal P$ by compression through $\kappa$. For every projective
representation $\mathcal Q$ with factor set $\alpha^{-1}$, the
corner compression of $\mathcal Q\otimes\mathcal P$ is
$\mathcal Q\otimes\mathcal P'$.
\end{enumerate}
The final assertion in (ii) follows from
\cite[Theorem~4.2]{Fu26}. This normalization is essential: the
argument requires the residual corner isomorphism induced by the
fixed construction. It does not require an identity
$\operatorname{Br}_D(\sigma(y))=1$ on
$S/J(\OO)S\cong M_n(kZ)$.

\subsection{Central quotients and Brauer maps}

\begin{lemma}\label{dgn:lem:central91-brauer-quotient}
Let $V$ be a finite group, let $Z\leq Z(V)$ be a $p$-subgroup,
and let $D$ be a $p$-subgroup of $V$ containing $Z$. Set
$\bar V=V/Z$, $P=D/Z$, and
let $q:kV\to k\bar V$ be the natural epimorphism.
For every $a\in(kV)^D$,
\begin{equation}\label{dgn:eq:central91-brauer-quotient}
 q\bigl(\operatorname{Br}_D^V(a)\bigr)
     =\operatorname{Br}_P^{\bar V}\bigl(q(a)\bigr).
\end{equation}
On the left-hand side, $q$ is induced by the natural map
$C_V(D)\to C_{\bar V}(P)$, which need not be surjective.
Moreover, $q$ induces a bijection between the central idempotents
of $kV$ and those of $k\bar V$, preserving primitivity.
\end{lemma}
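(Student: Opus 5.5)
The identity \eqref{dgn:eq:central91-brauer-quotient} can be checked directly on group elements, and the idempotent statement follows by lifting idempotents through the nilpotent kernel of $q$.

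\textbf{Step 1: the Brauer identity.} Both sides are $k$-linear in $a$. The space $(kV)^D$ is spanned by the $D$-orbit sums $x^{D+}$, where $x^{D+}$ is the sum of the distinct $D$-conjugates of $x$. So it suffices to treat $a=x^{D+}$. Because $Z\le Z(V)$, the group $Z$ acts trivially by conjugation. Hence $D$-conjugation on $V$ factors through $P=D/Z$, and $q$ maps $D$-orbits of $V$ onto $P$-orbits of $\bar V$.

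\emph{Left-hand side.} $\operatorname{Br}_D^V(x^{D+})$ equals $x$ if $x\in C_V(D)$, and $0$ otherwise. So the left side is $\bar x$ or $0$.

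\emph{Right-hand side.} Here $q(x^{D+})=|\mathcal O_x|\cdot|\mathcal O_{\bar x}|^{-1}\,\bar x^{P+}$, where $\mathcal O_x$ and $\mathcal O_{\bar x}$ are the orbits of $x$ and $\bar x$, and $\bar x^{P+}$ is the $P$-orbit sum of $\bar x$. The coefficient is the index $|\mathrm{Stab}_D(\bar x):\mathrm{Stab}_D(x)|$, a power of $p$.

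\emph{Comparison, by cases.}
\begin{enumerate}
\item If $x\in C_V(D)$, both orbits are singletons and both sides equal $\bar x$.
\item If $\bar x\notin C_{\bar V}(P)$, the right side is $0$. Also $x\notin C_V(D)$, so the left side is $0$.
\item If $\bar x\in C_{\bar V}(P)$ but $x\notin C_V(D)$, then the orbit of $x$ is nontrivial, so the $p$-power coefficient is divisible by $p$. The right side $\operatorname{Br}_P(q(a))$ is therefore a multiple of $p$, hence $0$ in $k$, matching the left side.
\end{enumerate}
The map $C_V(D)\to C_{\bar V}(P)$ is just $x\mapsto\bar x$. Case (iii) is exactly why this map need not be surjective.

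\textbf{Step 2: the bijection on central idempotents.} The kernel of $q$ is $kV\cdot I(kZ)$, where $I(kZ)$ is the augmentation ideal. Since $Z$ is a central $p$-group, $I(kZ)$ is nilpotent, so $\ker q$ is a nilpotent ideal. It is generated by central elements, so $q$ maps $Z(kV)$ into $Z(k\bar V)$, with nilpotent kernel $\ker q\cap Z(kV)$.
\begin{enumerate}
\item \emph{Injectivity.} Two idempotents with nilpotent difference in a commutative ring are equal. So distinct central idempotents of $kV$ have distinct images.
\item \emph{Surjectivity.} Here I would rely on the standard fact that block idempotents of $kV$ and $k\bar V$ correspond for a central $p$-subgroup $Z$. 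The reason is that $Z$ lies in every defect group, so every block of $kV$ dominates exactly one block of $k\bar V$ and each block of $k\bar V$ arises this way (Nagao). Since central idempotents are sums of block idempotents, the bijection on blocks extends to all central idempotents.
\item \emph{Primitivity.} The correspondence respects orthogonal sums in both directions, so primitivity is preserved.
\end{enumerate}

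\textbf{Main obstacle.} Surjectivity in Step 2 is the delicate point, because lifting idempotents modulo a nilpotent ideal is only automatic inside a fixed ring. We must show that each central idempotent of $k\bar V$ lifts to a central one of $kV$, not merely to an idempotent of $kV$. I would handle this through the block-domination argument above rather than a purely ring-theoretic lift.
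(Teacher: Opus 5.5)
Your Step~1 is the paper's argument with different bookkeeping. The paper compares coefficients coset by coset, while you reduce by linearity to $D$-orbit sums; the three cases and the use of $Z\le Z(V)$ are the same. Your injectivity argument in Step~2 also matches the paper's.

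The genuine difference is surjectivity in Step~2. You import the block correspondence for a central $p$-subgroup. The paper instead removes exactly the obstacle you flagged with a short ring-theoretic argument. Let $\bar e$ be a central idempotent of $k\bar V$, let $e$ be \emph{any} idempotent lift in $kV$, and let $I$ be the augmentation ideal of $kZ$. Then $X=e\,(kV)(1-e)$ maps to $\bar e\,(k\bar V)(1-\bar e)=0$, so $X\subseteq I\cdot kV$. Since $I$ is central, $X=eX(1-e)\subseteq IX$, and hence $X\subseteq I^mX=0$. Likewise $(1-e)(kV)e=0$, so $e$ is central. This argument is self-contained and needs no block theory. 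Your route additionally identifies the bijection with block domination, but it relies on an external theorem.

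Be careful with the reason you give for that theorem. Every normal $p$-subgroup lies in every defect group, yet for $C_3\trianglelefteq S_3$ and $p=3$ the unique block of $kS_3$ dominates both blocks of $kC_2$. What makes domination bijective is the centrality of $Z$. For example, $q$ then maps the $p$-regular class sums of $V$ bijectively onto those of $\bar V$, and block idempotents are supported on $p$-regular classes. You should cite the central-subgroup result itself rather than that justification.
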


\begin{proof}
Write $a=\sum_{v\in V}a_vv$. Since $a$ is $D$-invariant, its
coefficients are constant on $D$-conjugacy orbits. If $vZ\in\bar V$
is not fixed by $P$, its coefficient is zero on both sides of
\eqref{dgn:eq:central91-brauer-quotient}.  If
$vZ\in C_{\bar V}(P)$, the entire coset $vZ$ is $D$-stable.
If one element of this coset centralizes $D$, then every element
does, since $Z\leq Z(V)$. Both coefficients are then
$\sum_{z\in Z}a_{vz}$.  Otherwise the coset is a union of nontrivial
$D$-orbits, each of length a positive power of $p$.
The sum of the coefficients on each such orbit is zero in $k$, and
both coefficients are again zero. This proves
\eqref{dgn:eq:central91-brauer-quotient}, without requiring
surjectivity of the map on centralizers.

Let $I$ be the augmentation ideal of $kZ$. Then $I$ is nilpotent,
$\ker q=I(kV)$, and $I$ is contained in the center of $kV$.
Idempotent lifting modulo a nilpotent ideal gives a complete set of
orthogonal lifts of any complete set of orthogonal idempotents in
the quotient. Suppose the quotient idempotents are central, and let
$e,f$ be lifts of distinct ones. Then
\[
 e(kV)f\subseteq I(kV),\qquad e(kV)f=I\,e(kV)f.
\]
The second equality follows from the centrality of $I$. Iteration
and nilpotence give $e(kV)f=0$ and, similarly, $f(kV)e=0$.
Thus the lifts are central. If central idempotents $e,f$ have the
same image, then
$e(1-f),f(1-e)\in\ker q$.  Each is an idempotent in a nilpotent
ideal, and hence is zero. Thus $e=f$, proving the asserted
bijection. Primitivity is preserved as a consequence.
\end{proof}

\subsection{Block correspondence on the test groups}

\begin{lemma}\label{dgn:lem:central91-test-blocks}
Let $h\in H$ be a $p'$-element with $D\leq C_M(h)$, and set
$J=M\langle h\rangle$. For every $M\leq J_0\leq J$, the blocks
$B,c$ paired by $\kappa_{J_0}$ satisfy
\begin{equation}\label{dgn:eq:central91-test-blocks}
 \operatorname{Br}_D^{J_0}(B)=c,\qquad c^{J_0}=B.
\end{equation}
Here $c$ is a block of $J_0'=N_{J_0}(D)=J_0\cap H$, and the
idempotents in the first identity are understood after reduction
modulo $J(\OO)$.
\end{lemma}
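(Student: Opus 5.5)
Since $Z\leq Z(J_0)$ and $J_0/M$ is a cyclic $p'$-group, I would reduce the whole statement to the central quotient $\bar J_0=J_0/Z$ and there apply the residual magic construction. By Lemma~\ref{dgn:lem:central91-brauer-quotient}, reduction modulo $J(\OO)$ followed by $q$ gives a primitivity-preserving bijection between the central idempotents of $kJ_0$ and those of $k\bar J_0$. The same holds for $kJ_0'$, because $Z\leq Z(J_0')$. It also intertwines $\operatorname{Br}_D^{J_0}$ with $\operatorname{Br}_P^{\bar J_0}$, where $P=D/Z$. It therefore suffices to prove $\operatorname{Br}_P(\bar B)=\bar c$ in $k\bar J_0'$ for the reduced blocks. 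The second identity $c^{J_0}=B$ then follows from Brauer's first main theorem in its idempotent form, as in the proof of Lemma~\ref{pre:lem:galois-defects}: block induction from $N_{J_0}(D)$ is the inverse of the Brauer correspondence. For that I must also check that $D$ is a defect group of $B$ and of $c$. I would get this from $\operatorname{Br}_D(B)=c\neq0$ together with the fact that $D$ is a Sylow-type defect group for $J_0\geq M$ with $J_0/M$ a $p'$-group. Since $M\nrm J_0$ has $p'$-index, the blocks of $J_0$ covering $\bl(\xi)$ have defect groups conjugate to $D$.

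In the quotient, item (ii) of the data in Subsection~\ref{dgn:subsec:central91-blocks} identifies the specialized and reduced corner isomorphism. On the test element it reads $\bar\kappa(\bar y)=\bar y\,\bar\sigma(y)^{-1}$ for $y\in J_0'$. Here $\bar\sigma(y)=\widehat T_y$ is $k$-linear and $\Phi(\widehat T_y)=1$. Set $\bar J_0'=\langle\bar M',\bar h^r\rangle$. Since $h$ centralizes $D$, the element $\bar\sigma(h)$ centralizes $\rho(P)$ in $\bar S$. Indeed, the action of $h$ commutes with that of $P$, so $\bar\sigma(h)$ normalizes $\rho(P)$. Because $h$ has $p'$-order and the action of $P$ is unique, I would argue that $\bar\sigma(h)$ in fact centralizes $\rho(P)$, using coprime action on the $p$-group $\rho(P)$ and the crossed identity with $\bar\sigma(u)$ for $u\in D$. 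Then $\operatorname{Br}_P^{\bar S}(\bar\sigma(h))=\Phi(\widehat T_h)=1$. Consequently, for a central idempotent $c=\sum_y c_y y$ of $k\bar J_0'$, the image $\bar\kappa(c)=\sum c_y y\bar\sigma(y)^{-1}$ has Brauer image $c$. The point is that $\operatorname{Br}_P$ on $k\bar J_0\bar i$ is compatible with the tensor decomposition $\bar i\,k\bar J_0\bar i\supseteq \bar S\otimes(\text{corner})$. The factor $\bar S$ contributes $\bar S(P)=k$, and the factor $\bar b'$ maps to itself, since $P$ acts trivially on $k\bar L\bar b'$ because $\bar L=C_{\bar N}(P)$.

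With these facts, the correspondence $Bi=\kappa(c)$ from item (i) gives $\operatorname{Br}_P(\bar B)\operatorname{Br}_P(\bar i)=\operatorname{Br}_P(\bar\kappa(\bar c))=\bar c$. Since $\operatorname{Br}_P(\bar i)=\operatorname{Br}_P(\bar b)\bar b'$ and $\operatorname{Br}_P(\bar B)$ is a central idempotent of $k\bar J_0'$, I would conclude $\operatorname{Br}_P(\bar B)\bar b'=\bar c$. I would then argue that $\operatorname{Br}_P(\bar B)$ is supported only on blocks covering $\bar b'$, because $\bar B$ covers $\bar b$ and $\operatorname{Br}_P(\bar b)=\bar b'$ by the defect-zero DGN relation. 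This gives the first identity.

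\textbf{Main obstacle.} The hardest step is the compatibility of $\operatorname{Br}_P$ with the corner map, that is, computing $\operatorname{Br}_P(y\bar\sigma(y)^{-1})=y$ inside $k\bar J_0$ rather than inside $\bar S$. The difficulty is that $\operatorname{Br}_P^{\bar J_0}$ lives on the group algebra, while $\Phi$ and $\operatorname{Br}_P^{\bar S}$ live on the multiplicity algebra. I would first handle this by writing $(\bar i\,k\bar N\bar i)^P\cong\bar S^P\otimes k\bar L\bar b'$ and using the Brauer quotient isomorphism of Lemma~\ref{dgn:lem:c91-natural-residual}. That isomorphism should identify $\operatorname{Br}^{\bar N}_P$ restricted to $(\bar i\,k\bar N\bar i)^P$ with $\operatorname{Br}_P^{\bar S}\otimes\mathrm{id}$. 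I would then extend this to the graded components indexed by $\langle\bar h\rangle$ using that $\bar h$ centralizes $P$.
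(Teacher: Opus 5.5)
Your architecture is the paper's: pass to $J_0/Z$ via Lemma~\ref{dgn:lem:central91-brauer-quotient}, use $\bar B\,\bar i=\bar\kappa_{J_0}(\bar c)$, and obtain $\operatorname{Br}_P(\bar B)=\operatorname{Br}_P(\bar B)\bar b'$ from $\operatorname{Br}_P(\bar b)=\bar b'$. Then lift through the central-quotient bijection and finish with defect groups and Brauer's first main theorem. Your tensor-factor description of $\operatorname{Br}_P$ on $(\bar i\,k\bar N\bar i)^P\cong\bar S^P\otimes k\bar L\bar b'$ is a valid substitute for the paper's shorter remark. That remark says the $P$-equivariant inclusion $\bar S\hookrightarrow\bar i\,k(N/Z)\bar i$ induces $t\mapsto t\bar b'$ on Brauer quotients. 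The gap is the step ``Consequently, \dots\ has Brauer image $c$'', which you draw after treating only $\bar\sigma(h)$.

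To apply $\operatorname{Br}_P$ termwise to $\bar\kappa(\bar c)=\sum_y c_y\,y\,\bar\sigma(y)^{-1}$, each summand must be $P$-fixed. That needs two facts you do not state. First, the support property: since $P\trianglelefteq\bar J_0'$, the idempotent $\bar c$ is supported on $C_{\bar J_0'}(P)$. Second, $\bar\sigma(y)\in(\bar S^P)^\times$ with $\operatorname{Br}^{\bar S}_P(\bar\sigma(y))=1$ for \emph{every} $\bar y$ in that centralizer, not just for $y=h$ and $y\in L$.

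Because $\bar M'=\bar L\times P$, this centralizer is $(\bar L\times Z(P))\langle\bar h^r\rangle$. So elements with a nontrivial $Z(P)$-component really occur, and neither $\sigma(l)=1$ nor your argument for $h$ covers them.

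The paper treats all such $y$ at once. If $yZ$ centralizes $P$, then $u\mapsto\bar\sigma(y)\rho(u)\bar\sigma(y)^{-1}$ is a homomorphism implementing the same $P$-action as $\rho$. Uniqueness of $\rho$ (that is, $\operatorname{Hom}(P,k^\times)=1$) then forces $\bar\sigma(y)\in(\bar S^P)^\times$. Hence $\operatorname{Br}^{\bar S}_P(\bar\sigma(y))=\Phi(\widehat T_y)=1$.

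This uses no coprimality, so your appeal to coprime action on $\rho(P)$ is unnecessary. The $p'$-order of $h$ matters only later, to show that $D$ is a defect group of $B$ and of $c$. With this uniform statement and the support property added, the rest of your argument goes through unchanged.
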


\begin{proof}
Since $Z\leq Z(M)$, $Z\leq D$ and $h\in C_G(D)$, we have
$Z\leq Z(J)$ and hence $Z\leq Z(J_0)$.  Also $Z\leq D$, so
\[
 N_{J_0/Z}(D/Z)=N_{J_0}(D)/Z=J_0'/Z.
\]
We use bars for the modular objects obtained after passage to the
central quotient by $Z$, and set $P=D/Z$.
Lemma~\ref{dgn:lem:central91-brauer-quotient} shows that
$\bar B,\bar c$ are block idempotents in the corresponding quotient
groups.  Reduction compatibility of the integral corner isomorphism
gives
\begin{equation}\label{dgn:eq:central91-reduced-corner}
 \bar B\,\bar i=\bar\kappa_{J_0}(\bar c).
\end{equation}

We first verify the Brauer normalization in the quotient. Let
$yZ\in C_{J_0'/Z}(P)$. Conjugation by $y$ acts trivially on $P$.
Since $y\in G$ fixes $\theta_0$, its residual field action is
trivial, and $\bar\sigma(y)$ implements its action on $\bar S$.
The uniqueness of the implementing homomorphism $\rho$ gives
\[
 \bar\sigma(y)\rho(u)\bar\sigma(y)^{-1}=\rho(u)
                                             \qquad(u\in P).
\]
Consequently, $\bar\sigma(y)\in(\bar S^P)^\times$. The
normalization supplied by the fusion homomorphism therefore gives
\begin{equation}\label{dgn:eq:central91-quotient-norm}
 \operatorname{Br}^{\bar S}_P(\bar\sigma(y))
       =\Phi(\widehat T_y)=1.
\end{equation}
The $P$-equivariant inclusion
$\bar S\hookrightarrow\bar i\,k(N/Z)\bar i$ commutes with
relative trace maps and therefore induces a homomorphism on
Brauer quotients. Under the scalar identification $\bar S(P)=k$,
this homomorphism sends $t\in k$ to $t\bar b'$, since
$1_{\bar S}=\bar i$ and the defect-zero quotient configuration
gives $\operatorname{Br}_P(\bar i)=\bar b'$.
Thus \eqref{dgn:eq:central91-quotient-norm} implies
$\operatorname{Br}_P(\bar\sigma(y))=\bar b'$ in the group algebra.
The element $y$ itself need not centralize $D$.

We now apply the calculation in
\cite[proof of Corollary~11.2]{Lad11} to $k(J_0/Z)$.
Since $P\trianglelefteq J_0'/Z$, the support property of block
idempotents in the presence of a normal $p$-subgroup gives
\[
 \bar c=\sum_{\bar y\in C_{J_0'/Z}(P)}c_{\bar y}\bar y.
\]
Moreover, $\operatorname{Br}_P(\bar b)=\bar b'$.
Together with \eqref{dgn:eq:central91-reduced-corner} and
\eqref{dgn:eq:central91-quotient-norm}, this yields
\begin{align*}
 \operatorname{Br}_P(\bar B)
 &=\operatorname{Br}_P(\bar B)\bar b'
   =\operatorname{Br}_P(\bar B\,\bar i)\\
 &=\operatorname{Br}_P(\bar\kappa_{J_0}(\bar c))\\
 &=\sum_{\bar y\in C_{J_0'/Z}(P)}c_{\bar y}\bar y\,
       \operatorname{Br}_P(\bar\sigma(y)^{-1})
   =\bar c.
\end{align*}

Let $q:kJ_0'\to k(J_0'/Z)$.  By
\eqref{dgn:eq:central91-brauer-quotient},
\[
 q(\operatorname{Br}_D(B))
       =\operatorname{Br}_P(\bar B)=\bar c=q(c).
\]
The element $\operatorname{Br}_D(B)$ is a $J_0'$-invariant
idempotent, so it and $c$ are central idempotents of $kJ_0'$.
The uniqueness of central idempotent lifts modulo the nilpotent
kernel of $q$ gives $\operatorname{Br}_D(B)=c$. Thus this equality
identifies the blocks paired by the fixed map $\kappa_{J_0}$.

The blocks under consideration cover $b$, and hence cover the unique
block $\bl(\xi)$ of $M$ above $b$.
Since $J_0/M$ is a $p'$-group, the intersection property for defect
groups of covering blocks shows that every block of $J_0$ covering
$\bl(\xi)$ has defect group $D$. The same argument applies
locally. Equivalently, \cite[Lemma~3.5(c)]{NS14}, applied locally,
gives a defect group $Q$ with $Q\cap M'=D$. As $J_0'/M'$ is a
$p'$-group, this implies $Q=D$. Now $J_0'=N_{J_0}(D)$, so Brauer's
first main theorem yields $c^{J_0}=B$.
\end{proof}

\subsection{Trace identities and all intermediate groups}

\begin{theorem}\label{dgn:thm:central91-all-blocks}
The character triple correspondence determined by the pair
$\mathcal P,\mathcal P'$ above satisfies
\begin{equation}\label{dgn:eq:central91-all-blocks}
 \bl(\tau_U(\chi))^U=\bl(\chi)
 \qquad(M\leq U\leq G,\ \chi\in\Irr(U\mid\xi)).
\end{equation}
\end{theorem}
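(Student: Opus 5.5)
The plan is to reduce the block identity \eqref{dgn:eq:central91-all-blocks} to the test groups of Definition~\ref{dgn:def:test-group}, where Lemma~\ref{dgn:lem:central91-test-blocks} already identifies the block correspondence. We do this with the normalized trace criterion of \cite[Lemma~4.2 and Theorem~4.4]{NS14}, applied to the fixed pair $\mathcal P,\mathcal P'$.

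First fix $M\le U\le G$ and $\chi\in\Irr(U\mid\xi)$. Write $\chi=\tr(\mathcal Q\otimes\mathcal P_U)$ with $\mathcal Q$ a projective representation of $U/M$ with factor set $\alpha^{-1}$. By Lemma~\ref{dgn:lem:same-tensor-model}, $\tau_U(\chi)$ is the corner compression $\chi'=\tr(\mathcal Q\otimes\mathcal P'_{U'})$. Both $\bl(\chi)$ and $\bl(\chi')$ cover $\bl(\xi)$ and $\bl(\eta)$ respectively. Following the argument at the end of Lemma~\ref{dgn:lem:central91-test-blocks}, I will show that $\bl(\chi')$ has a defect group $Q$ with $Q\cap M'=D$. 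Then $C_U(Q)\le C_U(D)\le U'=N_U(D)$, so block induction $\bl(\chi')^U$ is defined.

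Next, by the central-character criterion, it suffices to compare $\omega_\chi$ and $\omega_{\chi'}^U$ on class sums $C^+$ of $U$. By Brauer's criterion, the induced central character is $\omega_{\chi'}(\Br_D(C^+))$-type data, so only classes of $p$-regular elements $x$ with a defect group containing $D$ contribute. Equivalently, only classes meeting $C_U(D)$ contribute, and after conjugation we may take $x=yh$ with $y\in D$-related $p$-part and $h$ a $p'$-element of $C_U(D)$ with $D\in\Syl_p(C_M(h))$. This is exactly the reduction in \cite[Theorem~4.4]{NS14}. There, equality of central characters modulo $J(\OO)$ on such classes follows from a normalized trace identity of the form
\[
 \frac{\chi(x)}{\theta(1)}\equiv\frac{\chi'(x)}{\phi(1)}\cdot(\text{unit})\pmod{J(\OO)}
\]
on the coset $Mh$, computed through $\mathcal P$ and $\mathcal P'$. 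Since $\chi$ on $Mh$ is $\mathcal Q(hM)$ times the trace of $\mathcal P$ on $Mh$, the $\mathcal Q$-factor is common to both sides and cancels. The identity therefore reduces to the statement for the characters of the test group $J=M\langle h\rangle$ obtained from the same $\mathcal P,\mathcal P'$. Those characters are extensions of $\xi,\eta$ times linear characters of the cyclic $p'$-group $J/M$.

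On $J$, and on each $M\le J_0\le J$, the corner correspondence pairs blocks $B$ and $c$ with $\Br_D(B)=c$ and $c^{J_0}=B$, by Lemma~\ref{dgn:lem:central91-test-blocks} and item~(i) of the preceding data. Corner compression through $\kappa_{J}$ sends a character in $B$ to one in $c$, so the test-group block relation holds for the tensor correspondence. Feeding this into \cite[Lemma~4.2]{NS14} gives the required congruence of normalized traces on every coset $Mh$, and Theorem~4.4 of \cite{NS14} then yields $\bl(\tau_U(\chi))^U=\bl(\chi)$.

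The main obstacle is verifying the hypotheses of the Navarro--Sp\"ath criterion for our specific $\mathcal P,\mathcal P'$. These include integrality of the projective representations over $\OO$, the common factor set and equal scalars on $C_G(M)$, and the fact that the same pair governs all test groups simultaneously. I would handle integrality by choosing $\mathcal P$ over $\OO$ on $V$, which is possible after a basis change not affecting the factor set. The compression by the idempotent $j\in\OO N$ and the units $\kappa(x)$ then keeps $\mathcal P'$ integral. Agreement on the test groups with the corner map is guaranteed because the data in item~(iii) use the single fixed $\kappa$.
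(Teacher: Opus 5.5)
Your proposal follows the same route as the paper. You apply \cite[Theorem~4.4]{NS14} and verify its normalized trace hypothesis at each $p'$-element $h\in H$ with $D\in\Syl_p(C_M(h))$. To do this, you apply \cite[Lemma~4.2(b)]{NS14} to the extensions $\tr(\mathcal Q\otimes\mathcal P_J)$ and $\tr(\mathcal Q\otimes\mathcal P'_{J'})$ on $J=M\langle h\rangle$, whose block compatibility for all $M\le J_0\le J$ comes from Lemma~\ref{dgn:lem:central91-test-blocks}. You then cancel the $\mathcal Q$-factor.

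The one point to tighten is that this cancellation must be done on the test group, with a one-dimensional, root-of-unity-valued $\mathcal Q$ of the cyclic group $J/M$, as the paper does. It cannot be done with a general projective representation of $U/M$, because $\tr\mathcal Q(hM)$ could vanish modulo $J(\OO)$. Your middle paragraph sketching the class-sum reduction inside Theorem~4.4 is unnecessary, since that theorem is applied as a black box.
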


\begin{proof}
The common factor set and the equality of central scalars give a
central character triple isomorphism. Both base blocks have defect
group $D$, and $N_M(D)=M'$. Hence the group and defect-group
hypotheses of \cite[Theorem~4.4]{NS14} are satisfied. It remains
to verify the normalized trace condition in that theorem.

Let $h\in H$ be a $p'$-element satisfying
$D\in\operatorname{Syl}_p(C_M(h))$.  In particular,
$h\in C_G(D)$, so Lemma~\ref{dgn:lem:central91-test-blocks} applies
to $J=M\langle h\rangle$.
Since $J/M$ is cyclic, there is a one-dimensional projective
representation $\mathcal Q$ of $J/M$ over a finite cyclotomic
extension of the coefficient field, with factor set
$\alpha^{-1}|_{J/M}$.
Let $\OO'$ be the valuation ring of this enlarged coefficient field.
We may choose $\mathcal Q$ to take values in a finite group of roots
of unity: the recursive construction along a cyclic generator
requires only roots of roots of unity. In particular,
$\mathcal Q(hM)$ is a unit in $\OO'$.
We extend scalars in the fixed constructions $\sigma,\kappa$;
the residual Brauer identities remain valid after the corresponding
finite extension of the residue field.

Define the ordinary characters
\[
 \psi=\operatorname{tr}(\mathcal Q\otimes\mathcal P_J),\qquad
 \psi'=\operatorname{tr}(\mathcal Q\otimes\mathcal P'_{J'}),
 \qquad J'=J\cap H.
\]
These characters extend $\xi$ and $\eta$, respectively, and are
therefore irreducible. The compatibility of corner compression with
tensor factors shows that they correspond under $\kappa_J$.
For every $M\leq J_0\leq J$, their restrictions again extend
$\xi,\eta$ and correspond under $\kappa_{J_0}$.
Lemma~\ref{dgn:lem:central91-test-blocks} gives
\[
 \bl(\psi'_{J_0\cap H})^{J_0}=\bl(\psi_{J_0}).
\]
For $a\in\OO'$, write $a^*$ for its image under the residue map
\[
 \OO'\longrightarrow\OO'/J(\OO').
\]
We apply \cite[Lemma~4.2(b)]{NS14} to the base groups $M,M'$, the
upper groups $J,J'$, the characters $\xi,\eta$, and their
extensions $\psi,\psi'$. Since both base characters have height
zero, we obtain
\[
 \left(\frac{|M|_{p'}\psi(h)}{\xi(1)_{p'}}\right)^*
  =\left(\frac{|M'|_{p'}\psi'(h)}{\eta(1)_{p'}}\right)^*.
\]
Cancelling the common nonzero factor $\mathcal Q(hM)^*$ gives
\begin{equation}\label{dgn:eq:central91-trace-test}
 \left(
  \frac{|M|_{p'}\operatorname{tr}\mathcal P(h)}{\xi(1)_{p'}}
 \right)^*
 =
 \left(
  \frac{|M'|_{p'}\operatorname{tr}\mathcal P'(h)}{\eta(1)_{p'}}
 \right)^*.
\end{equation}
The integrality of these normalized traces in $\OO'$ also follows
from \cite[Lemma~4.3]{NS14}. In the preceding calculation, we
have divided only by the root of unity $\mathcal Q(hM)$, which
is a unit in $\OO'$.

The identity holds for every test element required by
\cite[Theorem~4.4]{NS14}, which therefore gives
\eqref{dgn:eq:central91-all-blocks}. The conclusion applies to the
fixed pair $\mathcal P,\mathcal P'$ whose mixed comparison
functions were synchronized in Section~\ref{sec:mixed}.
\end{proof}

The proof uses the centrality of $Z$ only in the test groups
$J=M\langle h\rangle$ and their subgroups, and therefore does not
require $Z\leq Z(G)$. In each quotient argument, the subgroup being
factored out is a central $p$-subgroup, so the required block
bijection is available. Moreover, the integral corner equivalence
includes every central-character fiber; no character lying over a
nontrivial character of $Z$ is required to descend to $M/Z$.

\section{Proof of the main theorem and consequences}\label{sec:completion}

We first extend the correspondence on a fixed character fiber to the
full Galois orbit. We then verify the block $\HH$-triple relation in
Theorem~A and derive two consequences.

\subsection{Gluing the Galois fibers}
\label{dgn:sec:central91-gluing}

We use the notation of Theorem~A. Thus $N\leq M$ are normal
subgroups of $A$, the quotient $M/N$ is a $p$-group, and
$b\in\Bl(N)$ has defect group $Z\leq Z(M)$. The character
$\theta\in\Irr(b)$ is $M$-invariant and satisfies
\[
 \theta^a\in\theta^{\HH}\qquad\text{for every }a\in A.
\]
Let $B$ be the unique block of $M$ covering $b$, choose a defect
group $D$ of $B$, and set
\[
 H=N_A(D),\qquad L=N_N(D),\qquad M'=N_M(D)=LD.
\]
Let $\phi$ be the generalized DGN correspondent of $\theta$, and
write $b_0=\bl(\phi)$. The unique block $B_0$ of $M'$ covering
$b_0$ is the Brauer correspondent of $B$.

Lemma~\ref{dgn:lem:delta-equivariance},
Proposition~\ref{dgn:prop:common-projectives},
Lemma~\ref{dgn:lem:central91-height-fibers} and
Theorem~\ref{dgn:thm:central91-all-blocks} provide a
mixed-equivariant correspondence of height-zero characters on the
$\theta$-fiber. All the required representation and block conditions
are realized by the same corner construction.
Set
\[
 \mathcal T=H\times\HH,\qquad
 \mathcal T_\theta=(H\times\HH)_\theta.
\]
The calculations for $\mathcal T_\theta$ are carried out at finite
coefficient levels using the finite mixed stabilizer $\GG$ from
Section~\ref{sec:lift}. By \eqref{dgn:eq:bottom-stabilizers},
we have $\mathcal T_\theta=\mathcal T_\phi$.

\begin{proposition}\label{dgn:prop:central91-gluing}
The correspondence on the $\theta$-fiber extends uniquely to a
$\mathcal T$-equivariant bijection
\[
 \Delta_D:\Irr_0(M\mid\theta^{\HH})
       \longrightarrow\Irr_0(M'\mid\phi^{\HH}).
\]
For every $\gamma\in\HH$, it maps the fiber above $\theta^\gamma$
to the fiber above $\phi^\gamma$, and sends each character to a
character in the Brauer correspondent of its block.
\end{proposition}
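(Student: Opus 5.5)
The plan is the standard transport-of-structure gluing along the Galois orbit. Fix the fiber bijection $\Delta_\theta:\Irr_0(M\mid\theta)\to\Irr_0(M'\mid\phi)$ from Lemma~\ref{dgn:lem:central91-height-fibers}. For each $\gamma\in\HH$ and $\chi\in\Irr_0(M\mid\theta^\gamma)$, define
\[
 \Delta_D(\chi)=\Delta_\theta\bigl(\chi^{\gamma^{-1}}\bigr)^{\gamma}.
\]
Since the Galois action preserves degrees, restriction to $N$ and defect groups (Lemma~\ref{pre:lem:galois-defects}), it sends $\Irr_0(M\mid\theta)$ bijectively onto $\Irr_0(M\mid\theta^\gamma)$, and likewise for $\phi$. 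So each piece is a bijection from the fiber above $\theta^\gamma$ to the fiber above $\phi^\gamma$.

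First, I check well-definedness. If $\theta^\gamma=\theta^{\gamma'}$, then $\delta=\gamma'\gamma^{-1}$ lies in $(1\times\HH)_\theta\subseteq\mathcal T_\theta$. By Lemma~\ref{dgn:lem:delta-equivariance}, applied to the finite image of $\delta$ in $\Gamma$, $\Delta_\theta$ commutes with $\delta$, so the two definitions agree. Since $\mathcal T_\theta=\mathcal T_\phi$ by \eqref{dgn:eq:bottom-stabilizers}, we have $\phi^\gamma=\phi^{\gamma'}$. Hence the target fiber is also unambiguous, and the sets $\Irr(M\mid\phi^\gamma)$ partition the codomain exactly as the sets $\Irr(M\mid\theta^\gamma)$ partition the domain. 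Distinct members of $\theta^{\HH}$ give disjoint fibers because $\theta$ is $M$-invariant. This yields a global bijection. Uniqueness follows because any $\HH$-equivariant extension of $\Delta_\theta$ is forced to satisfy the displayed formula.

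Next, I prove $\mathcal T$-equivariance. Let $(h,\delta)\in\mathcal T$ and $\chi$ lie above $\theta^\gamma$. Choose $\gamma_1\in\HH$ with $\theta^{h}=\theta^{\gamma_1}$, which is possible by the hypothesis on $A$. Then $\chi^{h\delta}$ lies above $\theta^{\gamma_1\gamma\delta}$ (the actions commute), and
\[
 a=(h,\gamma\delta(\gamma_1\gamma\delta)^{-1})=(h,\gamma_1^{-1})
\]
fixes $\theta$, so $a\in\mathcal T_\theta$. Writing $\chi^{h\delta}$ as the $\gamma_1\gamma\delta$-transport of $(\chi^{\gamma^{-1}})^{a}$ and using Lemma~\ref{dgn:lem:delta-equivariance} for $a$ gives $\Delta_D(\chi^{h\delta})=\Delta_D(\chi)^{h\delta}$. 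This requires only the bookkeeping that $\phi^{h}=\phi^{\gamma_1}$, which again comes from $\mathcal T_\theta=\mathcal T_\phi$.

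Finally, I check the block statement. On the $\theta$-fiber, $\Delta_\theta(\xi)$ lies in $B_0$, the Brauer correspondent of $B$, as recorded in the proof of Lemma~\ref{dgn:lem:central91-height-fibers}. For $\chi$ above $\theta^\gamma$, we have $\bl(\chi)=B^\gamma$. By Lemma~\ref{pre:lem:galois-defects} and Lemma~\ref{dgn:lem:central91-frattini}, $D$ is a defect group of $B^\gamma$ and its Brauer correspondent is $B_0^\gamma=\bl(\Delta_D(\chi))$. The main technical point I expect is the equivariance step: Lemma~\ref{dgn:lem:delta-equivariance} is stated for the finite group $\Gamma$ over $\Gal(K/\Q_p)$. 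I would pass from $\HH$ to $\Gamma$ by choosing $K$ large enough that all characters in the orbit have values in $K$, and by noting that each element of $\HH$ fixing the chosen prime restricts to $\Gal(K/\Q_p)$. This matches the finite-level reduction the paper already uses for $\mathcal T_\theta$.
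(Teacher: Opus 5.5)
Your proposal is correct and is essentially the paper's transport-of-structure gluing. The paper defines $\Delta_D(\xi_0^t)=\Delta_\theta(\xi_0)^t$ for all $t\in\mathcal T$, so equivariance is built in and the mixed stabilizer $\mathcal T_\theta=\mathcal T_\phi$ is used only for well-definedness, via $u=tt_1^{-1}$. You transport only along $\HH$ and then check $\mathcal T$-equivariance by hand, with your mixed element $(h,\gamma_1^{-1})\in\mathcal T_\theta$ playing exactly the role of $tt_1^{-1}$; the block statement via Lemma~\ref{pre:lem:galois-defects} matches the paper's naturality argument (note only that $\Irr(M\mid\phi^\gamma)$ should read $\Irr(M'\mid\phi^\gamma)$).
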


\begin{proof}
Since $\theta^h\in\theta^{\HH}$ for every $h\in H$, the
$\mathcal T$-orbit of $\theta$ is exactly $\theta^{\HH}$.
Naturality of the generalized DGN correspondence gives the analogous
statement for $\phi$. By
Lemma~\ref{dgn:lem:central91-height-fibers} and Galois transport,
each character in $\Irr_0(M\mid\theta^{\HH})$ restricts
irreducibly to $N$.
Thus the fibers above distinct elements of $\theta^{\HH}$ are
disjoint. Likewise, every character in
$\Irr_0(M'\mid\phi^{\HH})$ restricts irreducibly to $L$, so
the fibers above distinct elements of $\phi^{\HH}$ are disjoint.

For $\xi_0\in\Irr_0(M\mid\theta)$ and $t\in\mathcal T$, define
\[
 \Delta_D(\xi_0^t)=\Delta_\theta(\xi_0)^t.
\]
Suppose that $\xi_0^t=\xi_1^{t_1}$, where
$\xi_1\in\Irr_0(M\mid\theta)$ and $t_1\in\mathcal T$.
Restriction to $N$ gives $\theta^t=\theta^{t_1}$. Hence
$u=tt_1^{-1}\in\mathcal T_\theta$ and $\xi_1=\xi_0^u$.
Mixed equivariance on the $\theta$-fiber yields
\[
 \Delta_\theta(\xi_1)^{t_1}
 =\Delta_\theta(\xi_0^u)^{t_1}
 =\Delta_\theta(\xi_0)^{ut_1}
 =\Delta_\theta(\xi_0)^t.
\]
Thus $\Delta_D$ is well defined and $\mathcal T$-equivariant.
Since $\mathcal T_\theta=\mathcal T_\phi$, the fibers on both
sides are indexed by the same coset space
$\mathcal T_\theta\backslash\mathcal T$. The bijections on these
fibers therefore combine to give a bijection, and equivariance makes
this extension unique. Taking $t=(1,\gamma)$ gives the asserted
property of the prescribed Galois fibers. Block compatibility follows
from the corresponding property on the $\theta$-fiber and the
naturality of Brauer correspondence under group and Galois actions.

This argument uses equivariance under the full mixed stabilizer
$\mathcal T_\theta$. Separate equivariance under $H_\theta$ and
$\HH_\theta$ would not suffice, since $tt_1^{-1}$ need not belong
to $H_\theta\times\HH_\theta$.
\end{proof}

\subsection{Stabilizers and the overgroup relation}

\begin{theorem}\label{dgn:thm:central91-final}
Theorem~A holds. In particular, if $\eta=\Delta_D(\xi)$, then
\[
 (A_{\xi^{\HH}},M,\xi)_{\HH}
       \geb(H_{\eta^{\HH}},M',\eta)_{\HH}.
\]
\end{theorem}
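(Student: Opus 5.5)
The bijection $\Delta_D$ and its fiber and block properties are already supplied by Proposition~\ref{dgn:prop:central91-gluing}. What remains is the relation $\geb$ for a given pair $(\xi,\eta)$ with $\eta=\Delta_D(\xi)$. Since $\Delta_D$ is $H\times\HH$-equivariant, both the relation and its realizing pair transport along the mixed action. I therefore first reduce to $\xi\in\Irr_0(M\mid\theta)$. Writing $\xi=\xi_0^t$ with $\xi_0$ on the $\theta$-fiber and $t=(h,\gamma)$, I conjugate the triples by $h$ and apply $\gamma$ to the projective representations. This replaces $\mathcal P,\mathcal P'$ by $\mathcal P^t,\mathcal P'^t$. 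Factor sets, central scalars, comparison functions (via \eqref{pre:eq:comparison}) and block induction all transform compatibly, the last by Lemma~\ref{pre:lem:galois-defects}. So I may take $\xi$ on the $\theta$-fiber and work with $G=A_{\xi^{\HH}}$ and $H_1=H_{\eta^{\HH}}$.

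Next comes condition (i) of Definition~\ref{def:full}, with upper groups $A_{\xi^\HH}$ and $H_{\eta^\HH}$. Equivariance of $\Delta_D$ and its bijectivity give $(H\times\HH)_\xi=(H\times\HH)_\eta$, hence $H_{\xi^\HH}=H_{\eta^\HH}$. Lemma~\ref{dgn:lem:central91-frattini} gives $A=MH$. Any $g=mh$ in $A_{\xi^\HH}$ has $h\in H_{\xi^\HH}$ because $M$ fixes $\xi$, so $A_{\xi^\HH}=M H_{\eta^\HH}$ and $A_{\xi^\HH}\cap H=H_{\eta^\HH}$. For $C_{A}(M)\le H$: such elements centralize $D\le M$. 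Stabilizer equality for the mixed group follows as above.

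Conditions (ii) and (iii) are exactly Proposition~\ref{dgn:prop:common-projectives}. Its representations are defined on $A_\xi$ and $H_\xi=H_\eta$, which are the stabilizers $G_\xi$ and $(H_1)_\eta$ inside the $\HH$-triples. Its comparison statement covers every $a\in(H\times\HH)_\xi$, which is precisely the mixed stabilizer required. Realization over $\Qab$ with root-of-unity factor sets is also part of that proposition.

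Condition (iv) is where the work lies. The common defect group $D$ and $N_M(D)=M'$ come from Lemma~\ref{dgn:lem:central91-height-fibers}. For $M\le W\le A_\xi$, the relation $\bl(\tau_W(\chi))^W=\bl(\chi)$ is Theorem~\ref{dgn:thm:central91-all-blocks}. I must check that it uses the same pair: Lemma~\ref{dgn:lem:same-tensor-model} identifies the tensor correspondence \eqref{pre:eq:tensor-correspondence} with the corner one, and the theorem is stated for the fixed pair. The main obstacle I anticipate is bookkeeping. Theorem~\ref{dgn:thm:central91-all-blocks} was proved with $G$ reset to $A_\xi$, and the transport step must not break the identification of realizing pairs. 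I would address this by noting that every construction (the maps $\sigma$ and $\kappa$, and the pair $\mathcal P,\mathcal P'$) is made once on the $\theta$-fiber and then transported. Block induction is defined because $C_W(D)\le W\cap H$, which holds since $D\le M'$.
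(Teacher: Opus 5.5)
Your proposal is correct and follows essentially the same route as the paper: mixed transport to the $\theta$-fiber (the paper does this at the end rather than the start), condition (i) from equivariance and injectivity of $\Delta_D$ together with $A=MH$, conditions (ii)--(iii) from Proposition~\ref{dgn:prop:common-projectives}, and condition (iv) for the same pair from Lemma~\ref{dgn:lem:same-tensor-model} and Theorem~\ref{dgn:thm:central91-all-blocks}. One small correction: block induction is defined because $C_W(D)\le N_W(D)=W\cap H$ (as $H=N_A(D)$) and every defect group of a block of $W\cap H$ covering $\bl(\eta)$ contains the normal $p$-subgroup $D$, not because $D\le M'$.
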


\begin{proof}
First suppose that $\xi\in\Irr_0(M\mid\theta)$, and set
\[
 X=A_{\xi^{\HH}},\qquad Y=H\cap X.
\]
The equivariance and injectivity of $\Delta_D$ give
\[
 Y=H_{\eta^{\HH}},\qquad
 (Y\times\HH)_\xi=(Y\times\HH)_\eta.
\]
By Lemma~\ref{dgn:lem:central91-frattini}, we have $A=MH$.
Since inner automorphisms of $M$ fix every ordinary character of
$M$, we have $M\leq X$. If $x=mh\in X$, with $m\in M$ and
$h\in H$, then $h\in H\cap X$. Consequently,
\[
 X=MY,\qquad M\cap Y=M\cap H=M',\qquad Y=N_X(D).
\]
Moreover,
\[
 C_X(M)\leq C_X(D)\leq Y,\qquad N_M(D)=M'.
\]
The group $D$ is a common defect group of $\bl(\xi)$ and
$\bl(\eta)$. The required condition on its centralizer is
\[
 C_{X_\xi}(D)\leq Y_\xi=Y_\eta.
\]

We next verify the representation conditions in the block relation.
Restriction to $N$ gives $X_\xi=A_\xi\leq A_\theta$, and we have
$Y_\eta=Y_\xi=H\cap A_\xi$. Thus the construction on the
$\theta$-fiber is available on the full inertia groups occurring in
the desired relation. Let $\mathcal P,\mathcal P'$ be the associated
projective representations provided by
Proposition~\ref{dgn:prop:common-projectives}.
Every $a=(h,\gamma)\in(Y\times\HH)_\xi$ belongs to
$\mathcal T_\theta$, so the synchronization formula gives
\[
 \mu'_a=\mu_a|_{Y_\xi}.
\]
The factor sets of this pair agree on $Y_\xi\times Y_\xi$, and
the pair has equal central scalars. For every $M\leq J\leq X_\xi$,
its associated correspondence satisfies
\[
 \bl(\tau_J(\chi))^J=\bl(\chi)
                         \qquad(\chi\in\Irr(J\mid\xi)),
\]
by Lemma~\ref{dgn:lem:same-tensor-model} and
Theorem~\ref{dgn:thm:central91-all-blocks}.
Hence all conditions of the block $\HH$-triple relation are
satisfied by the same pair $\mathcal P,\mathcal P'$.

If $\xi$ lies above another element of $\theta^{\HH}$, choose
$t\in\mathcal T$ transporting it to the $\theta$-fiber.
Transport the two projective representations and the entire family
of intermediate-group correspondences simultaneously. Group
isomorphisms and valuation-preserving Galois automorphisms preserve
the factor-set, central-scalar and comparison-function identities,
as well as Brauer correspondence. Since $H$ normalizes $D$ and
Lemma~\ref{dgn:lem:central91-frattini} preserves $D$ as a defect
group under Galois transport, the transported pair realizes the
required relation on every fiber. This proves Theorem~A.
\end{proof}

The proof combines the generalized DGN correspondence, the Dade
multiplicity algebra, the finite-field fusion homomorphism and the
block trace criterion. The normalization under the full mixed
stabilizer makes the fiber covariance and the central-test
calculation compatible within the same corner construction.
Together with Clifford theory and centralization arguments, this
correspondence can be used in reduction arguments beyond the
normal $p$-section considered here.

\subsection{The defect-zero case}

\begin{corollary}\label{cor:defect-zero}
In Theorem~A, assume that $Z=1$. Then $N\cap D=1$,
$L=C_N(D)$ and $M'=D\times C_N(D)$. The correspondence
$\Delta_D$ has all the equivariance, height-zero and block
$\HH$-triple properties stated in Theorem~A.
\end{corollary}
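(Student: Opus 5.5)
The plan is to specialize Theorem~A (in the form of Theorem~\ref{dgn:thm:central91-final}) to $Z=1$, and then to check the stated group-theoretic identities directly from the generalized DGN configuration \eqref{dgn:eq:basic-groups}. The corollary claims nothing new about $\Delta_D$ beyond Theorem~A. It only asserts that the hypotheses of Theorem~A remain satisfied when $Z=1$, and that the local groups take the stated shape. So the proof should be short and consist of a hypothesis check followed by an identification of groups.

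First I will check the hypotheses. When $Z=1$, the block $b$ has trivial defect group, so $b$ has defect zero. The condition $Z\le Z(M)$ then holds trivially. The remaining hypotheses of Theorem~A, namely $M$-invariance of $\theta$ and $\theta^a\in\Omega_\theta$ for $a\in A$, are unchanged. Hence Theorem~A applies verbatim. This yields the $H\times\HH$-equivariant bijection $\Delta_D$, its fiber and Brauer-correspondent properties, and the relation $\geb$ realized by a single pair of projective representations.

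I also want to confirm that nothing in the construction degenerates when $Z=1$. Here $R=\OO Z=\OO$ with $J(R)=J(\OO)$, so Lemma~\ref{dgn:lem:matrix-base} and Theorem~\ref{dgn:thm:c91-integral-lift} hold over $\OO$. The Teichm\"uller normalization is the usual one. There is exactly one central-character fiber, with $\lambda=1$. Lemma~\ref{dgn:lem:c91-natural-residual} applies with $P=D$.

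Second, I will derive the group identities. From \eqref{dgn:eq:basic-groups}, $N\cap D=Z=1$. The identity $L/Z=C_{N/Z}(D/Z)$ becomes $L=C_N(D)$. Alternatively, for $x\in N_N(D)$ and $u\in D$, the commutator $[x,u]$ lies in $N\cap D=1$, since both $N$ and $D$ are normalized by $x$ and $N\nrm A$. Then $M'=LD$ with $L=C_N(D)$ centralizing $D$, and $L\cap D\le N\cap D=1$. Moreover, $D$ normalizes $L$ because $N\nrm M$, and $L$ normalizes $D$. Two mutually normalizing subgroups with trivial intersection that centralize each other generate their direct product, so $M'=D\times C_N(D)$.

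The only point requiring some care is that $L=C_N(D)$ rather than merely $N_N(D)$. The commutator argument above settles this, so I expect no real obstacle. The corollary then follows by quoting Theorem~\ref{dgn:thm:central91-final}.
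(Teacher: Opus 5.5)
Your proposal is correct and follows essentially the same route as the paper: it also notes that only the group identities need checking, uses $[x,D]\leq N\cap D=1$ for $x\in N_N(D)$ to get $L=C_N(D)$, and combines this with $M'=LD$ to get $M'=D\times C_N(D)$, quoting Theorem~A for everything else. Your extra check that the construction does not degenerate over $R=\OO$ is not needed. The mutual-normalization remark is also redundant, since elementwise commuting with trivial intersection already gives the direct product.
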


\begin{proof}
Only the group identities require verification. For $x\in N_N(D)$,
we have $[x,D]\leq N\cap D=1$, so $N_N(D)=C_N(D)$. The identity
$N_M(D)=DN_N(D)$ therefore gives
$M'=D\times C_N(D)$. The remaining assertions follow from
Theorem~A.
\end{proof}

When $Z=1$, the block $b$ has defect zero, giving the corresponding
configuration in the magic-representation approach. Here the
conclusion is realized by one pair of ordinary projective
representations with synchronized mixed comparison functions and
compatible block correspondences on every intermediate group.

\subsection{A form for the Alperin--McKay--Navarro reduction}

The following consequence is stated without fixing an irreducible
character of $N$ and is suited to a normal section in an
Alperin--McKay--Navarro reduction. For the surrounding reduction
frameworks, see \cite{Spa13,NS14} for the block conditions and
\cite{Nav04,NSV20} for the Galois conditions.

\begin{corollary}\label{int:cdgn}
Let $N\leq V$ be normal subgroups of a finite group $B$, and let
$D\leq V$ be a $p$-subgroup. Suppose that
\[
 V=ND,\qquad Z=N\cap D\leq Z(V).
\]
Then there is an $N_B(D)\times\HH$-equivariant bijection
\[
 \Delta:\Irr_0(V\mid D)\longrightarrow\Irr_0(N_V(D)\mid D)
\]
that sends each character to a character in the Brauer correspondent
of its block and satisfies
\[
 (B_{\xi^{\HH}},V,\xi)_{\HH}\geb
 \bigl((N_B(D))_{(\xi')^{\HH}},N_V(D),\xi'\bigr)_{\HH}
 \quad\text{for }\xi'=\Delta(\xi).
\]
\end{corollary}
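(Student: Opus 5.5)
We reduce Corollary~\ref{int:cdgn} to Theorem~A by decomposing $\Irr_0(V\mid D)$ according to the blocks of $N$ and the $\HH$-orbits of their irreducible characters, applied with $A=B$ (the ambient group), $M=V$. First observe that $V/N\cong D/Z$ is a $p$-group. Let $\xi\in\Irr_0(V\mid D)$, so $\bl(\xi)$ has defect group $D$. Let $b$ be a block of $N$ covered by $\bl(\xi)$. Since $N\nrm V$ with $V/N$ a $p$-group, $\bl(\xi)$ is the unique block of $V$ covering $b$, and $D\cap N=Z$ is a defect group of $b$ (standard intersection property for covering blocks). As $Z\leq Z(V)$, we have $Z\leq Z(N)$; hence $Z$ is normal, is the unique defect group of $b$, and $b$ has central defect. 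Moreover $b$ is $V$-stable because $V=ND$ and $D$ centralizes $Z$.

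Next I would show that every $\theta\in\Irr(b)$ is $V$-invariant. Indeed, by \eqref{dgn:eq:gamma-fiber}, $\theta=\theta_\lambda$ is determined by $\lambda\in\Irr(Z)$ and the defect-zero character $\theta_0$ of $q(b)$. Conjugation by $D$ fixes $\lambda$ since $Z\leq Z(V)$, and it fixes $\theta_0$ since $q(b)$ is $D$-stable and has a unique ordinary character. For the Galois/ambient hypothesis, I would not fix $\theta$ globally but work orbit by orbit. Take the set $\mathcal X$ of pairs $(b,\theta)$ as above with $\Irr_0(V\mid\theta)$ containing characters of defect group $D$. Then $N_B(D)\times\HH$ acts on $\mathcal X$. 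For each orbit representative $\theta$, set $A_1=B_{\theta^{\HH}}$. The hypothesis of Theorem~A, $\theta^a\in\Omega_\theta$ for all $a\in A_1$, then holds by construction, and $N,V\nrm A_1$. Theorem~A with $A=A_1$ gives an $(N_{A_1}(D)\times\HH)$-equivariant bijection $\Delta_D$ on $\Irr_0(V\mid\theta^{\HH})$ with the block and $\geb$ properties.

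Then I would glue these bijections over the $B$-orbits in the manner of Proposition~\ref{dgn:prop:central91-gluing}. Here the Frattini argument of Lemma~\ref{dgn:lem:central91-frattini} shows that $B$-conjugates of $\theta^{\HH}$ whose fibers still have defect group $D$ are already conjugate under $N_B(D)$. Define $\Delta$ on $\Irr_0(V\mid\theta^{h\HH})$ by transport via $h\in N_B(D)$, and check well-definedness using equivariance under the stabilizer $N_{A_1}(D)\times\HH$. The local side is handled the same way: $\Irr_0(N_V(D)\mid D)$ decomposes by the DGN correspondents $\phi$ in blocks $b'$ of $L=N_N(D)$. Surjectivity uses the fact that every block of $N_V(D)$ with defect group $D$ is the Brauer correspondent of a block of $V$ with defect group $D$, so it arises from some $(b,\theta)$.

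Finally, the $\geb$ relation must be upgraded from $A_1$ to $B$. The main obstacle is that Theorem~A yields the relation for $((A_1)_{\xi^{\HH}},V,\xi)$. Since $\xi$ lies over $\theta$ and restricts to $\theta$ (Lemma~\ref{dgn:lem:central91-height-fibers}), $B_{\xi^{\HH}}\leq B_{\theta^{\HH}}=A_1$. Thus $(A_1)_{\xi^{\HH}}=B_{\xi^{\HH}}$, and likewise on the local side. Transport along conjugates preserves the relation, as in the last paragraph of the proof of Theorem~\ref{dgn:thm:central91-final}. I expect the delicate points to be verifying the defect-group bookkeeping, namely that $D$ is really a defect group of the unique covering block and that characters of $\Irr_0(V\mid D)$ cover exactly these $b$, and checking the uniqueness of the covering block on the local side.
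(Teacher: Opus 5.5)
Your overall strategy matches the paper's. You take representatives $\theta$ of the $N_B(D)\times\HH$-orbits, apply Theorem~A with ambient group $B_{\theta^{\HH}}$, glue by transport, and use $\xi_N=\theta$ to get $B_{\xi^{\HH}}\le B_{\theta^{\HH}}$.

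The gap is in the step that supplies the basic hypothesis of Theorem~A, namely that $\theta$ is $V$-invariant. You derive it from the $V$-stability of $b$, and you justify that stability by ``$V=ND$ and $D$ centralizes $Z$''. That reason is insufficient. Centralizing $Z$ says nothing about how $D$ permutes the blocks of $N$ with defect group $Z$. For $p=2$, take $V=S_3$, $N=A_3$ and $D=\langle(12)\rangle$. Then $V=ND$ and $Z=1\le Z(V)$, but $D$ interchanges the two nonprincipal blocks of $N$, both of defect zero. What excludes such blocks in the corollary is that $\bl(\xi)$ has defect group $D$; in the example, those blocks are covered by the defect-zero block of $S_3$. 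Your stability argument never uses this hypothesis.

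The repair has to use it, and there are two ways. One option is Fong--Reynolds. Some defect group $D^v$ of $\bl(\xi)$, with $v\in V$, lies in $V_b$, so $V=ND^v\le V_b$. Your argument via $\lambda$, $\theta_0$ and \eqref{dgn:eq:gamma-fiber} then shows that every $\theta\in\Irr(b)$ is $V$-invariant.

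The other option is the paper's $p$-part count, which gives $\xi_N=\theta$ directly without appealing to Lemma~\ref{dgn:lem:central91-height-fibers}. Let $\theta$ be a constituent of $\xi_N$. Then $\theta(1)_p=|N:Z|_p=|V:D|_p=\xi(1)_p$, where the last equality uses that $\xi$ has height zero. Also $\xi(1)=|V:V_\theta|\,e\,\theta(1)$. Here $|V:V_\theta|$ is a power of $p$, and so is $e$, since it is the degree of an irreducible projective representation of the $p$-group $V_\theta/N$. Hence $V_\theta=V$ and $e=1$. The same count applied to $L\nrm LD$ shows that local height-zero characters restrict irreducibly to $L$.

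With either fix, the rest of your proposal goes through as in the paper.
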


\begin{proof}
Set $C=N_B(D)$ and $L=N_N(D)$, and fix
$\xi\in\Irr_0(V\mid D)$.
By \cite[Proposition~2.5]{NS14}, we may choose a constituent
$\theta\in\Irr_0(N\mid Z)$ of $\xi_N$ such that the Clifford
correspondent of $\xi$ over $\theta$ has defect group $D$.
Set $I=V_\theta$, and let $\psi\in\Irr(I\mid\theta)$ be this
Clifford correspondent. Then
\[
 \Ind_I^V(\psi)=\xi,\qquad \psi_N=e\theta
\]
for some positive integer $e$.
Choose a projective representation $\mathcal P_\theta$ of $I$
associated with $\theta$, with factor set inflated from $I/N$.
Projective Clifford theory expresses a representation affording
$\psi$ as $\mathcal Q\otimes\mathcal P_\theta$, where
$\mathcal Q$ is inflated from an irreducible projective
representation of $I/N$ with the inverse factor set.
Thus $e=\dim\mathcal Q$, the dimension of its representation
space. This is the projective degree appearing in the Clifford
degree formula
\[
 \xi(1)=[V:I]\,e\,\theta(1).
\]
Since $\theta$ and $\xi$ have height zero in blocks with defect
groups $Z$ and $D$, respectively, we have
\[
 \theta(1)_p=|N:Z|_p=|V:D|_p=\xi(1)_p.
\]
The quotient $V/N$ is a $p$-group, so $[V:I]$ is a power of $p$.
To see that $e$ is also a power of $p$, regard $\mathcal Q$ as a
complex projective representation of $I/N$.
The cohomology group $H^2(I/N,\C^\times)$ is annihilated by
$|I/N|$, so multiplying the matrices of $\mathcal Q$ by suitable
scalars makes its factor set take values in the $|I/N|$-th roots
of unity. The resulting projective representation lifts to an
irreducible ordinary representation of the corresponding finite
central extension of $I/N$, which is a $p$-group.
Its degree is still $e$, and hence $e$ is a power of $p$.
Taking $p$-parts in the degree formula therefore gives
$[V:I]e=1$. Hence $I=V$, $e=1$ and $\xi_N=\theta$.
In particular, $\theta$ is $V$-invariant.
Applying the same argument to $L\trianglelefteq N_V(D)=LD$
shows that every character in $\Irr_0(N_V(D)\mid D)$ restricts
irreducibly to $L$.

For the irreducible characters of $N$ and $L$ obtained by these
restrictions, the generalized DGN correspondence
$\theta\mapsto\phi$ preserves central characters and the specified
central defect group, and commutes with group and Galois actions.
The blocks and character fibers that occur on both sides are
identified in \cite[Section~5]{NS14}; they can also be described
using the $\OO Z$-matrix fibers in Section~\ref{sec:central}.
Hence the correspondence between these two sets of irreducible
characters is $C\times\HH$-equivariant.

Choose one representative $\theta$ from each $C\times\HH$-orbit
of the characters of $N$ obtained above, and write
$B_0=B_{\theta^{\HH}}$.
We apply Theorem~A with ambient group $B_0$. Indeed,
$N,V\nrm B_0$, the character $\theta$ is $V$-invariant, and
$\bl(\theta)$ has defect group $Z\leq Z(V)$. The unique block
of $V$ covering $\bl(\theta)$ contains the chosen $\xi$, and
therefore has defect group $D$. The theorem gives a correspondence
on the $\theta$-fiber that is equivariant under the entire mixed
stabilizer $(C\times\HH)_\theta$.

Extend these correspondences by setting
\[
 \Delta(\xi^t)=\Delta(\xi)^t\qquad(t\in C\times\HH)
\]
for $\xi$ in a chosen fiber. If a character has two such
expressions, restriction to $N$ shows that the transporting
elements differ by an element of $(C\times\HH)_\theta$.
Mixed equivariance on the chosen fiber therefore makes the
definition independent of the expression. Since the correspondence
between the occurring characters of $N$ and $L$, as well as the
correspondence on each fixed fiber, is bijective, $\Delta$ is a
bijection. Its block property
follows from the block property on each fiber and the naturality
of Brauer correspondence.

Finally, $\xi_N=\theta$ implies $B_{\xi^{\HH}}\leq B_0$.
The block $\HH$-triple relation obtained in $B_0$ is therefore
defined on the full overgroup required for $\xi$. Transporting
the pair of projective representations realizing this relation
simultaneously transports the common factor sets, central scalars,
comparison functions and intermediate-group block correspondences.
This gives the stated relation for every $\xi$.
\end{proof}

\Needspace{10\baselineskip}

\end{document}